\documentclass[oneside,english]{amsart}
\usepackage[T1]{fontenc}
\usepackage[latin9]{inputenc}
\pagestyle{plain}
\setcounter{tocdepth}{1}
\usepackage{textcomp}
\usepackage{amsthm}
\usepackage{amstext}
\usepackage{esint}

\makeatletter
\numberwithin{equation}{section}
\numberwithin{figure}{section}
\theoremstyle{plain}
\newtheorem{thm}{\protect\theoremname}[section]
  \theoremstyle{plain}
  \newtheorem{cor}[thm]{\protect\corollaryname}
  \theoremstyle{definition}
  \newtheorem{example}[thm]{\protect\examplename}
  \theoremstyle{plain}
  \newtheorem{lem}[thm]{\protect\lemmaname}
  \theoremstyle{remark}
  \newtheorem{rem}[thm]{\protect\remarkname}
  \theoremstyle{plain}
  \newtheorem{prop}[thm]{\protect\propositionname}


\def\makebbb#1{
    \expandafter\gdef\csname#1\endcsname{
        \ensuremath{\Bbb{#1}}}
}\makebbb{R}\makebbb{N}\makebbb{Z}\makebbb{C}\makebbb{H}\makebbb{E}\makebbb{H}\makebbb{P}\makebbb{B}\makebbb{Q}\makebbb{E}

\usepackage{babel}

\usepackage{babel}

\makeatother

\usepackage{babel}

\makeatother

\usepackage{babel}
  \providecommand{\corollaryname}{Corollary}
  \providecommand{\examplename}{Example}
  \providecommand{\lemmaname}{Lemma}
  \providecommand{\propositionname}{Proposition}
  \providecommand{\remarkname}{Remark}
\providecommand{\theoremname}{Theorem}

\begin{document}

\title{K-polystability of Q-Fano varieties admitting Kähler-Einstein metrics}

\author{Robert J. Berman}
\begin{abstract}
It is shown that any, possibly singular, Fano variety $X$ admitting
a Kähler-Einstein metric is K-polystable, thus confirming one direction
of the Yau-Tian-Donaldson conjecture in the setting of $\Q$-Fano
varieties equipped with their anti-canonical polarization. The proof
is based on a new formula expressing the Donaldson-Futaki invariants
in terms of the slope of the Ding functional along a geodesic ray
in the space of all bounded positively curved metrics on the anti-canonical
line bundle of $X.$ One consequence is that a toric Fano variety
$X$ is K-polystable iff it is K-polystable along toric degenerations
iff $0$ is the barycenter of the canonical weight polytope $P$ associated
to $X.$ The results also extend to the logarithmic setting and in
particular to the setting of Kähler-Einstein metrics with edge-cone
singularities. Applications to geodesic stability, bounds on the Ricci
potential and Perelman's $\lambda-$entropy functional on $K-$unstable
Fano manifolds are also given.
\end{abstract}

\address{Department of Mathematical Sciences, Chalmers University of Technology
and University of Gothenburg, Sweden}

\email{robertb@chalmers.se}

\maketitle
\tableofcontents{}

\section{Introduction}

Let $(X,L)$ be a polarized projective algebraic manifold. i.e. $L$
is an ample line bundle over $X.$ According to the fundamental Yau-Tian-Donaldson
conjecture in Kähler geometry (see the recent survey \cite{p-s2})
the first Chern class $c_{1}(L)$ contains a Kähler metric $\omega$
with \emph{constant scalar curvature} if and only if $(X,L)$ is \emph{K-polystable.}
This notion of stability is of an algebro-geometric nature and has
its origin in Geometric Invariant Theory (GIT). It was introduced
by Tian \cite{ti1} and in its most general form, due to Donaldson
\cite{d0} it is formulated in terms of polarized $\C^{*}-$equivariant
deformations $\mathcal{L}\rightarrow\mathcal{X}\rightarrow\C$ of
$(X,L)$ called\emph{ test configurations }for the polarized variety
$(X,L),$ where $\mathcal{X}_{1}=X.$\emph{ }Briefly, to any test
configuration $(\mathcal{X},\mathcal{L})$ one associates a numerical
invariant $DF(\mathcal{X},\mathcal{L}),$ called the \emph{Donaldson-Futaki
invariant }defined in terms of the polarized scheme $(\mathcal{X}_{0},\mathcal{L}_{|\mathcal{X}_{0}})$\emph{
}and $X$ is said to be K-polystable if $DF(\mathcal{X},\mathcal{L})\geq0$
with equality if and only if\emph{ $(\mathcal{X},\mathcal{L})$ }is
isomorphic to a product test configuration (the precise definitions
are recalled in section \ref{sub:K-polystability-and-test}). The
test configuration $(\mathcal{X},\mathcal{L})$ thus plays the role
of a one-parameter subgroup in GIT and the Donaldson-Futaki invariant
corresponds to the Hilbert-Mumford weight in GIT. Accordingly, the
Yau-Tian-Donaldson conjecture is sometimes also referred to as the
non-linear version of the celebrated Kobayashi-Hitchin correspondence
between Hermitian Yang-Mills metrics and polystable vector bundles. 

In the case when the connected component $\mbox{Aut}(X)_{0}$ containing
the identity of the the automorphism group is trivial, i.e. $X$ admits
no non-trivial holomorphic vector fields, it was shown by Stoppa \cite{st}
that the existence of a constant scalar curvature metric in $c_{1}(L)$
indeed implies that $(X,L)$ is K-polystable. The case when $\mbox{Aut}(X)_{0}$
is non-trivial leads to highly non-trivial complications, related
to the case when $DF=0$ and was treated by Mabuchi in a series of
papers \cite{ma2,ma3} (see also \cite{st-sz} where it is shown that
if $c_{1}(L)$ contains an extremal Kähler metric, then $(X,L)$ is
K-polystable with respect to all test configurations whose $\C^{*}-$action
commutes with a maximal torus of automorphisms). In this note we will
be concerned with the special case when $\omega$ is a \emph{Kähler-Einstein
metric} of positive scalar curvature. Equivalently this means that
the Ricci curvature of $\omega$ is positive and constant: 
\[
\mbox{Ric }\ensuremath{\omega=\omega,}
\]
i.e. $L$ is the anti-canonical line bundle $-K_{X}$ and $X$ is
a Fano manifold. In the seminal paper of Tian \cite{ti1} it was shown,
in the case when $\mbox{Aut}(X)_{0}$ is trivial, that $X$ is K-stable
along all test configurations $\mathcal{X}$ with normal central fiber
$\mathcal{X}_{0}.$ Here we will show that the assumption on $\mbox{Aut}(X)_{0}$
can be removed, as well as the normality assumption on the central
fiber $\mathcal{X}_{0}.$ In fact, we will allow $X$ to be a general,
possibly singular, Fano variety and prove the following
\begin{thm}
\label{thm:k-poly intro}Let $X$ be a Fano variety admitting a Kähler-Einstein
metric. Then $X$ is K-polystable.
\end{thm}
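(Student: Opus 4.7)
The plan is to reformulate K-polystability as a statement about the asymptotic slope of the Ding functional $\mathcal{D}$ along weak geodesic rays in the space of bounded positively curved metrics on $-K_{X}$, and then to exploit the fact that $\mathcal{D}$ is both convex along such rays (Berndtsson) and minimized at the K\"ahler-Einstein potential. Recall that schematically $\mathcal{D}(\phi)=-E(\phi)-\log\int_{X}e^{-\phi}$, where $E$ is the Monge-Amp\`ere energy, and that the critical points of $\mathcal{D}$ are precisely the K\"ahler-Einstein metrics on $X$. Convexity of $\mathcal{D}$ along weak geodesics, together with uniqueness of minimizers modulo $\mathrm{Aut}(X)_{0}$, is the analytic backbone of the argument.

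Given a test configuration $(\mathcal{X},\mathcal{L})$ for $(X,-K_{X})$, I would first attach to it a weak geodesic ray $\{\phi_{t}\}_{t\geq 0}$ of bounded positively curved metrics on $-K_{X}$ emanating from the K\"ahler-Einstein reference $\phi_{0}$. Following the Phong-Sturm / Berndtsson / Chen construction, this ray is produced by pulling back the natural $S^{1}$-invariant metric on an equivariant compactification of $\mathcal{L}$ and solving the homogeneous complex Monge-Amp\`ere equation on $X\times\Delta^{*}$, with extra care taken for the singularities of $X$ and the possibly non-normal central fibre $\mathcal{X}_{0}$.

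The central technical step is the slope formula announced in the abstract, of the form
\[
\lim_{t\to\infty}\frac{\mathcal{D}(\phi_{t})}{t}\;=\;-\,\mathrm{DF}(\mathcal{X},\mathcal{L}),
\]
identifying the Donaldson-Futaki invariant with (minus) the Ding slope along the ray. The slope of the energy piece $E(\phi_{t})$ should be computed as a top self-intersection number on an equivariant compactification of $\mathcal{X}$, producing the intersection-theoretic part of $\mathrm{DF}$; the slope of the $\log\int e^{-\phi}$ term is controlled by log canonical thresholds along $\mathcal{X}_{0}$ and reproduces the anticanonical twist, including corrections coming from multiplicities of the central fibre. I expect this formula, particularly in the singular and possibly non-normal setting, to be the main obstacle, and it is where the bulk of pluripotential-theoretic and intersection-theoretic work must be carried out.

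Once the slope formula is in hand, K-semistability follows at once: the K\"ahler-Einstein property means $\mathcal{D}(\phi_{t})\geq\mathcal{D}(\phi_{0})$ for all $t$, hence the Ding slope is nonnegative, hence $\mathrm{DF}(\mathcal{X},\mathcal{L})\leq 0$. To upgrade to K-polystability I would analyze the equality case. If $\mathrm{DF}(\mathcal{X},\mathcal{L})=0$, then by geodesic convexity $\mathcal{D}$ is constant along $\{\phi_{t}\}$, so each $\phi_{t}$ is itself a minimizer and hence a K\"ahler-Einstein potential. Berndtsson's uniqueness theorem for minimizers of the Ding functional then forces the ray to be generated by a holomorphic one-parameter subgroup of $\mathrm{Aut}(X)_{0}$; lifting this flow to the total space of $(\mathcal{X},\mathcal{L})$ should identify it with a product test configuration. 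The delicate points in this final step are the lift of the holomorphic flow across the central fibre and the treatment of non-reduced components, which are again handled by the same pluripotential machinery developed for the slope formula.
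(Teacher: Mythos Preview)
Your overall strategy matches the paper's: attach a weak geodesic ray to the test configuration, relate $DF$ to the asymptotic slope of the Ding functional, and combine Berndtsson's convexity with the K\"ahler--Einstein potential being a critical point. The gap is in your slope formula. For a general test configuration the paper proves (Theorem~\ref{thm:df=00003Dding})
\[
-DF(\mathcal{X},\mathcal{L})=\lim_{t\to\infty}\frac{d}{dt}\mathcal{D}(\phi^{t})+q,\qquad q\geq 0,
\]
where $q$ is a rational defect determined by the central fiber: it combines the Lelong number at $\tau=0$ of the $L^{2}$-metric on $\pi'_{*}(\mathcal{L}'+K_{\mathcal{X}'/\mathbb{C}})$ (computed on a log resolution) with an intersection contribution from the effective divisor $\mathcal{L}'+K_{\mathcal{X}'/\mathbb{C}}$. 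The ``corrections coming from multiplicities of the central fibre'' you anticipate do \emph{not} get absorbed into the slope; they sit outside it as $q$. So your displayed equality is false in general.

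For semistability this is harmless, since slope $\geq 0$ and $q\geq 0$ still give $DF\leq 0$. But for the equality case it is decisive and is exactly the mechanism you are missing in your last paragraph. From $DF=0$ one gets simultaneously that $\mathcal{D}(\phi^{t})$ is affine \emph{and} that $q=0$; the paper then shows $q=0$ forces $\mathcal{X}_{0}$ to be generically reduced (and $\mathcal{L}\cong -K_{\mathcal{X}/\mathbb{C}}$). Once $\mathcal{D}$ is affine, Berndtsson's argument yields biholomorphisms $F_{\tau}:X_{1}\to X_{\tau}$ for $\tau\neq 0$ with uniform control $\sup_{X}|F_{\tau}^{*}\phi_{FS}-\phi_{1}|\leq C$, and generic reducedness of $\mathcal{X}_{0}$ is precisely what is needed (following Tian) to extend $F_{\tau}$ across $\tau=0$ and conclude $\mathcal{X}\cong X\times\mathbb{C}$. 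Without the $q$-term you have no algebraic constraint on $\mathcal{X}_{0}$ and no way to rule out non-reduced central fibers. Alternatively, the paper's first proof bypasses the analysis of $q$ by quoting Li--Xu to reduce to \emph{special} test configurations, where $q=0$ holds automatically by inversion of adjunction; if you are willing to invoke that reduction, your formula becomes correct for that class and your outline goes through.
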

It should be pointed out that, following Li-Xu \cite{l-x}, we assume
that the total space $\mathcal{X}$ of the test configuration is normal
to exclude some pathological test-configurations that had previously
been overlooked in the literature (as explained in \cite{l-x} ).
As follows from the results of Ross-Thomas \cite{r-t} this does not
affect the notion of K semi-stability. Moreover, by a remark of Stoppa
\cite{st-1} K-polystability for all normal test configuration is
equivalent to having $DF(\mathcal{X},\mathcal{L})\geq0$ for all test
configurations with equality iff $(\mathcal{X},\mathcal{L})$ is isomorphic
to a product test configuration away from a subvariety of codimension
at least two.

We recall that, by definition, $X$ is a Fano variety if it is normal
and the anti-canonical divisor $-K_{X}$ is defined as an ample $\Q-$line
bundle (such a variety is also called a $\Q-$Fano variety in the
literature) and, following \cite{bbegz}, $\omega$ is said to be
a\emph{ Kähler-Einstein metric} on $X$ if $\omega$ is a bona fide
Kähler-Einstein metric on the regular locus $X_{reg}$ of $X$ and
the volume of $\omega$ on $X_{reg}$ coincides with the top-intersection
number $c_{1}(-K_{X})^{n}[X].$ The existence of such a metric actually
implies that the singularities are rather mild in the sense of the
Minimal Model Program (MMP) in birational geometry \cite{bbegz},
more precisely the singularities of $X$ are \emph{(Kawamata) log
terminal} \emph{(klt}, for short). In fact, even if $X$ itself is
smooth we will show that the singularity structure of the central
fiber $\mathcal{X}_{0}$ of a given test configuration for $X$ (or
more precisely the log canonical threshold of $\mathcal{X}_{0}$)
plays an important role in the metric analysis of the Donaldson-Futaki
invariant $DF(\mathcal{X},\mathcal{L}),$ through the Lelong number
$l_{0}$ at $0$ of the $L^{2}-$ type metric on a certain adjoint
direct image sheaf over the base of the test configuration. Interestingly,
this will single out test configurations $\mathcal{X}$ whose central
fibers have log canonical singularities as the ``minimal'' ones,
in the sense that $l_{0}=0.$ This appears to give a new metric incarnation
of the appearance of log canonical singularities in the log MMP (which
is reminiscent of the algebro-geometric results in \cite{l-x}; compare
Remark \ref{Rem:li-x}).

One motivation for considering singular Kähler-Einstein varieties
$X$ is that they naturally appear when taking Gromov-Hausdorff limits
of smooth Kähler-Einstein varieties \cite{do-s}. This is related
to the expectation that one may be able to form \emph{compact }moduli
spaces of K-polystable Fano varieties if singular ones are included,
or more precisely those with log terminal singularities; compare the
discussions in \cite{od-3} and \cite{od-s-s} (where the surface
case is considered). 

Another motivation for allowing $X$ to be singular comes from the
toric setting considered in \cite{b-b-2}, where it was shown that
the existence of a Kähler-Einstein metric on a toric Fano variety
$X$ is equivalent to $X$ being $K-$polystable with respect to\emph{
toric} test configuration. In turn, this latter property is equivalent
to the canonical rational weight polytope $P$ associated to $X$
having zero as its barycenter. However, the question whether the existence
of a Kähler-Einstein metric on the toric variety $X$ implies that
$X$ is K-polystable for \emph{general} test configurations was left
open in \cite{b-b-2}. Combining the previous theorem with the results
in \cite{b-b-2} we thus deduce the following
\begin{cor}
A toric Fano variety is K-polystable iff it is K-polystable with respect
to toric test configurations. In particular, if $P$ is a reflexive
lattice polytope, then the toric Fano variety $X_{P}$ associated
to $P$ is $K-$polystable if and only if $0$ is the barycenter of
$P.$ 
\end{cor}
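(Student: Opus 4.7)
The plan is to deduce the corollary as an essentially formal combination of Theorem \ref{thm:k-poly intro} with the toric results of \cite{b-b-2}, so I do not expect to encounter a genuinely new obstacle; the work is really just in arranging the equivalences correctly. Let me outline the flow.

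One direction of the first equivalence is trivial: if a toric Fano variety $X$ is K-polystable against \emph{all} test configurations, then in particular it is K-polystable against the subclass of toric test configurations (those preserved by the torus action). So the content is in the converse implication. For this I would invoke the main result of \cite{b-b-2}, which states that K-polystability of a toric Fano variety $X$ with respect to toric test configurations is equivalent to the existence of a K\"ahler-Einstein metric on $X$. Once such a metric is produced, Theorem \ref{thm:k-poly intro} of the present paper applies directly to $X$ (regarded now as a Q-Fano variety, which is allowed since toric Fano varieties may have klt singularities and the theorem is stated in that generality), and yields K-polystability of $X$ against \emph{arbitrary} (normal) test configurations. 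This closes the loop and proves the first assertion.

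For the second assertion, concerning a reflexive lattice polytope $P$, the point is that the condition ``$0$ is the barycenter of $P$'' is exactly the combinatorial criterion identified in \cite{b-b-2} for K-polystability of $X_P$ against toric test configurations (this goes back to the computation of Donaldson-Futaki invariants of toric test configurations in terms of linear functionals on $P$, together with reflexivity ensuring that $-K_{X_P}$ corresponds to $P$ with $0$ in its interior). So the chain reads: $X_P$ K-polystable $\Longleftrightarrow$ $X_P$ K-polystable against toric test configurations $\Longleftrightarrow$ $X_P$ admits a K\"ahler-Einstein metric $\Longleftrightarrow$ $0$ is the barycenter of $P$, where the first equivalence is the just-proved general statement, the second is \cite{b-b-2}, and the third is the toric barycenter criterion of \cite{b-b-2}.

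The only part that requires a moment of thought is verifying that Theorem \ref{thm:k-poly intro} really applies in the toric setting: one must check that toric Fano varieties $X_P$ associated to reflexive lattice polytopes fit the hypotheses (normal, $-K_X$ ample as a $\Q$-line bundle, klt singularities), which is standard toric geometry. Beyond that, there is nothing to grind through; the corollary is a clean consequence of feeding the existence theorem of \cite{b-b-2} into the main theorem of this paper.
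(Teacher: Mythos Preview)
Your argument is correct and matches the paper's own (implicit) proof: the paper simply states that the corollary follows by ``combining the previous theorem with the results in \cite{b-b-2}'', and your outline spells out exactly that combination, with the trivial direction, the equivalence of toric K-polystability and existence of a K\"ahler-Einstein metric from \cite{b-b-2}, and then Theorem~\ref{thm:k-poly intro} closing the loop. Your remark that the klt hypothesis is satisfied is also fine (and in fact, as noted in the paper, the existence of a K\"ahler-Einstein metric already forces klt singularities).
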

We recall that\emph{ reflexive} lattice polytopes $P$ (i.e. those
for which the dual $P^{*}$ is also a lattice polytope) correspond
to toric Fano varieties whose singularities are Gorenstein, i.e $-K_{X}$
is an ample line bundle (and not only a $\Q-$line bundle). This huge
class of lattice polytopes plays an important role in string theory,
as they give rise to many examples of mirror symmetric Calabi-Yau
manifolds \cite{ba}. Already in dimension three there are 4319 isomorphism
classes of such polytopes \cite{k-s} and hence including \emph{singular}
Fano varieties leads to many new examples of K-polystable and K-unstable
Fano threefolds (recall that there are, all in all, only 105 families
of \emph{smooth }Fano threefolds). 

As explained in section \ref{sub:The-logarithmic-setting} the theorem
above extends to the logarithmic setting of Kähler-Einstein metrics
on \emph{log Fano varieties} $(X,D),$ as considered in \cite{bbegz}.
In particular, this shows that if $D$ is an effective $\Q-$divisor
with simple normal crossings, and coefficients $<1,$ on a projective
manifold $X$ such that the logarithmic first Chern class of $(X,D)$
contains a Kähler-Einstein metric $\omega$ with\emph{ edge-cone singularities}
along $D$ in the sense of \cite{do-3,cgh,j-m-r}, then the pair $(X,D)$
is log K-polystable in the sense of \cite{do-3,li1,o-s}.

The starting point of the proof of Theorem \ref{thm:k-poly intro}
is the following result of independent interest, which expresses the
Donaldson-Futaki invariant in terms of the Ding functional $\mathcal{D}$
(see formula \ref{eq:def of ding functional}):
\begin{thm}
\label{thm:DF=00003Dding intro}Let $X$ be a Fano variety with log
terminal singularities, \emph{$(\mathcal{X},\mathcal{L})$ a test
configuration for $(X,-K_{X})$ (assumed to have normal total space)
and $\phi$ a locally bounded metric on $\mathcal{L}$ with positive
curvature current. Setting $t:=-\log|\tau|^{2}$ and denoting by $\phi^{t}=\rho(\tau)^{*}\phi_{\tau}$
the corresponding ray of locally bounded metrics on $-K_{X}$ the
following formula holds: }
\begin{equation}
DF(\mathcal{X},\mathcal{L})=\lim_{t\rightarrow\infty}\frac{d}{dt}\mathcal{D}(\phi^{t})+q,\label{eq:df in terms of ding in theorem intro}
\end{equation}
 where $q$ is a non-negative rational number determined by the polarized
scheme $(\mathcal{X}_{0},\mathcal{L}_{|\mathcal{X}_{0}})$ with the
following properties, if $X$ is smooth:
\begin{itemize}
\item $q=0$ iff $\mathcal{X}$ is $\Q-$Gorenstein with $\mathcal{L}$
isomorphic to $-K_{\mathcal{X}/\C}$ and $\mathcal{X}_{0}$ is reduced
and its normalization has log canonical singularities.
\item In particular, if $\mathcal{X}_{0}$ is normal then $q=0$ iff $\mathcal{X}_{0}$
is reduced and has log canonical singularities (and in particular
$q=0$ if it has log terminal singularities).
\end{itemize}
\end{thm}
More precisely, we will give an explicit expression for the number
$q$ in terms of a given log resolution $(\mathcal{X},\mathcal{X}_{0})$
(see Theorem \ref{thm:df=00003Dding}). In order to prove Theorem
\ref{thm:k-poly intro} we apply Theorem \ref{thm:DF=00003Dding intro}
to a weak geodesic ray $\phi^{t}$, emanating from the Kähler-Einstein
metric on $-K_{X}$ (which is a critical point of the Ding functional).
We can then exploit a result of Berndtsson \cite{bern2} (and its
generalization to singular Fano varieties in \cite{bbegz}) concerning
the convexity of the Ding functional $\mathcal{D},$ also using a
new triviality result for certain flat direct image sheaves, of independent
interest (Proposition \ref{prop:flat vector bundle}). 

As for the proof of Theorem \ref{thm:DF=00003Dding intro} it is based
on the observation that $\mathcal{D}(\phi^{t})$ extends to define
a singular positively curved metric on a certain line bundle over
the base $\C$ of the given test configuration, that we will accordingly
call the \emph{Ding line bundle}. To make the connection to $DF(\mathcal{X},\mathcal{L})$
we use a result of Phong-Ross-Sturm \cite{p-r-s} which expresses
$DF(\mathcal{X},\mathcal{L})$ in terms of the weight over $0$ of
another line bundle $\eta$ over the base $\C,$ involving certain
Deligne pairings. This is also closely related to the intersection
theoretic formulation of the Donaldson-Futaki invariant due to Wang
\cite{w} and Odaka \cite{od}, independently. The error term $q$
in formula \ref{eq:df in terms of ding in theorem intro} can then
be decomposed into two pieces where the first piece is the Lelong
number $l_{0}$ at $0$ of Ding metric referred to above, which is
shown to coincide with the Lelong number at $0$ of an $L^{2}-$type
metric on a certain direct image sheaf. The non-negativity of $l_{0}$
then follows from the positivity results of Berndtsson-Paun for the
$L^{2}-$metrics on direct images of adjoint line bundles \cite{bern1,be-p}.
We show how to express $l_{0}$ explicitly in terms of a certain log
canonical threshold of the central fiber$\mathcal{X}_{0}$ (Proposition
\ref{prop:lelong of direct image}). Finally, the vanishing properties
of $q$ are obtained using inversion of adjunction \cite{kawi} (which
can be seen as an algebro-geometric incarnation of the Ohsawa-Takegoshi
extension theorem in complex analysis \cite{d-k,ko}). 

It should be pointed out that the information about the vanishing
properties of $q$ in Theorem \ref{thm:DF=00003Dding intro} are not
used in the proof of Theorem \ref{thm:k-poly intro}, but they appear
to give a new link between differential geometry and the MMP (see
Remark \ref{Rem:li-x}). Moreover, as discussed in section \ref{sec:Outlook-on-the},
the second point in Theorem \ref{thm:DF=00003Dding intro} fits naturally
into Tian's program \cite{ti2} for establishing the existence part
of the Yau-Tian-Donaldson conjecture - in particular when generalized
to the setting of singular Fano varieties (compare Corollary \ref{cor:mab diverges}).

We also give some applications of Theorem \ref{thm:DF=00003Dding intro}
to bounds on the Ricci potential and Perelman's $\lambda-$entropy
functional \cite{pe} (see section \ref{sub:Applications-to-bounds}),
which can be seen as analogs of Donaldson's lower bound on the Calabi
functional \cite{do2}. In particular, we obtain the following 
\begin{thm}
\label{thm:perelman intro}Let $X$ be an $n-$dimensional Fano manifold
and set $V:=c_{1}(X)^{n}.$ If $X$ is $K-$unstable, then Perelman's
$\lambda-$entropy functional satisfies 
\[
\sup_{\omega\in\mathcal{K}(X)}\lambda(\omega)<nV,
\]
 where $\mathcal{K}(X)$ denotes the space of all Kähler metrics in
$c_{1}(X).$
\end{thm}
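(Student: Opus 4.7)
The plan is to deduce Theorem \ref{thm:perelman intro} from Theorem \ref{thm:DF=00003Dding intro} by passing through the Ding functional. The strategy has two essentially independent steps. First, use $K$-instability of $X$ together with Theorem \ref{thm:DF=00003Dding intro} to produce a weak geodesic ray along which the Ding functional decays linearly to $-\infty$. Second, invoke a purely analytic comparison showing that $\sup_\omega \lambda(\omega) = nV$ would force $\mathcal{D}$ to be bounded from below. The two steps combine, via contraposition, to give the strict inequality $\sup_\omega \lambda(\omega) < nV$.

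For the first step, $K$-instability of $X$ provides a normal test configuration $(\mathcal{X},\mathcal{L})$ for $(X,-K_X)$ with $DF(\mathcal{X},\mathcal{L}) > 0$. Fixing any smooth strictly positively curved reference metric $\phi^0$ on $-K_X$, I would extend it to a locally bounded positively curved metric on $\mathcal{L}$ and form the associated ray $\phi^t$. Theorem \ref{thm:DF=00003Dding intro} then yields
\[
\lim_{t \to \infty}\frac{d}{dt}\mathcal{D}(\phi^t) \;=\; -DF(\mathcal{X},\mathcal{L}) - q \;\le\; -DF(\mathcal{X},\mathcal{L}) \;<\; 0,
\]
since $q \ge 0$. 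By Berndtsson's convexity of $\mathcal{D}$ along weak geodesics, $t \mapsto \mathcal{D}(\phi^t)$ is convex, so the asymptotic slope is attained from below and
\[
\mathcal{D}(\phi^t) \;\le\; \mathcal{D}(\phi^0) - ct
\]
for some positive constant $c$ and all large $t$. In particular $\inf \mathcal{D} = -\infty$.

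The second step is a quantitative comparison between $\lambda$ and $\mathcal{D}$, to be worked out in Section \ref{sub:Applications-to-bounds}. It starts from the familiar Perelman bound $\lambda(\omega) \le nV$ (obtained by plugging the Ricci potential of $\omega$ into the variational definition of $\lambda$) and refines it: if $\sup_\omega \lambda(\omega) = nV$, then there is a $\lambda$-maximizing sequence $\omega_j$ whose Ricci potentials tend to zero in a suitable sense, and this in turn supplies a uniform lower bound for $\mathcal{D}(\phi_{\omega_j})$, hence for $\inf_\phi \mathcal{D}$. Taking the contrapositive and combining with step one gives $\sup_\omega \lambda(\omega) < nV$.

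The main obstacle is the precise formulation of the dictionary between $\lambda$ and $\mathcal{D}$ in step two: the comparison must be sharp enough that the \emph{linear-in-$t$} decay of $\mathcal{D}(\phi^t)$ produced in step one translates into a \emph{strict} gap between $\sup_\omega \lambda(\omega)$ and $nV$. This is a careful but elementary calculation using the variational principle for $\lambda$ and Jensen's inequality applied to the term $-\log\int e^{-\phi}$ in $\mathcal{D}$, with proper tracking of the normalizations of the Ricci potential, the Monge-Amp\`ere energy, and the measure $e^{-\phi}$. No further algebro-geometric input beyond Theorem \ref{thm:DF=00003Dding intro} is required, since all test-configuration content is absorbed into step one.
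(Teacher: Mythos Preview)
Your step 1 is fine, but step 2 has a genuine logical gap. You claim that if $\sup_\omega\lambda(\omega)=nV$, then a $\lambda$-maximizing sequence $\omega_j$ has Ricci potentials tending to zero, and this ``supplies a uniform lower bound for $\mathcal{D}(\phi_{\omega_j})$, hence for $\inf_\phi\mathcal{D}$.'' The second ``hence'' is false: a lower bound for $\mathcal{D}$ along one particular sequence says nothing about $\inf_\phi\mathcal{D}$, and in particular does not contradict $\mathcal{D}(\phi^t)\to-\infty$ along your geodesic ray from step 1, which is an entirely different family. (In fact, the implication ``$\sup\lambda=nV\Rightarrow\mathcal{D}$ bounded below'' is essentially the open converse direction the paper explicitly flags after stating the theorem.) Even the weaker claim that small Ricci potential forces $\mathcal{D}(\phi_{\omega_j})$ bounded below is unsupported: control of $h_{\omega_j}$ gives no control on the energy term $\mathcal{E}(\phi_{\omega_j})$.

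The paper's argument avoids any global statement about $\inf\mathcal{D}$ and instead works pointwise in $\omega$. For each fixed $\omega$, one launches the weak geodesic ray from $\phi_\omega$; convexity gives
\[
-\frac{d}{dt}\Big|_{t=0}\mathcal{D}(\phi^t)\;\ge\;-\lim_{t\to\infty}\frac{d}{dt}\mathcal{D}(\phi^t)\;\ge\;DF(\mathcal{X},\mathcal{L}).
\]
The left-hand side equals $\int_X\bigl(\tfrac{1}{V}(dd^c\phi_\omega)^n-\tfrac{e^{-\phi_\omega}}{\int e^{-\phi_\omega}}\bigr)\dot\phi^0$, so H\"older yields $\|1-e^{h_\omega}\|_{L^1(\omega)}\,\|\dot\phi^0\|_{L^\infty}\ge DF$. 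The crucial extra input you are missing is Lemma~\ref{lem:l infty norm} (Hisamoto): $\|\dot\phi^0\|_{L^\infty}=\|(\mathcal{X},\mathcal{L})\|_\infty$ is \emph{independent of the initial metric} $\omega$. This gives a uniform positive lower bound on $\|1-e^{h_\omega}\|_{L^1}$ for \emph{all} $\omega$, then Pinsker's inequality bounds the relative entropy $\int h_\omega e^{h_\omega}\omega^n/V$ from below, and finally the elementary estimate $\lambda(\omega)\le nV-\int h_\omega e^{h_\omega}\omega^n$ (test function $f=-h_\omega$ in Perelman's $W$) closes the argument.
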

As is well-known $\lambda(\omega)\leq nV$ on the space $\mathcal{K}(X)$
and, as recently shown by Tian-Zhang \cite{ti-zhu} in their study
of the Kähler-Ricci flow, if a Fano manifold $X$ admits a Kähler-Einstein
metric $\omega_{KE}$ then $\lambda(\omega_{KE})=nV,$ or more generally:
if Mabuchi's K-energy is bounded from below on $\mathcal{K}(X),$
then supremum of $\lambda$ is equal to $nV.$ In the light of the
Yau-Tian-Donaldson conjecture it seems thus natural to conjecture
that $X$ is $K-$semistable if and only if the supremum of $\lambda$
is equal to $nV$ (the ``if direction'' is the content of the previous
theorem). In fact, a more precise version of Theorem \ref{thm:perelman intro}
will be obtained, where the supremum of $\lambda$ is explicitly bounded
in terms of minus the supremum of the Donaldson-Futaki invariants
over all (normalized) destabilizing test configurations for $(X,L)$
(see Cor \ref{cor:bound on lambda f}). 

During the revision of the present paper the existence of Kähler-Einstein
metrics on K-polystable smooth Fano varieties was finally settled
by Chen-Donaldson-Sun \cite{c-d-s} based on a modification of Tian's
original program introduced by Donaldson \cite{do-3}, which uses
metrics with conical singularities. In fact the proofs in \cite{c-d-s}
show that a Kähler-Einstein metric exists as soon as $X$ is K-polystable
with respect to \emph{special} test configurations and hence combining
the results in \cite{c-d-s} with the main result of the present paper
yields a new proof - not involving MMP - of the recent result of Li-Xu
\cite{l-x}, saying that to test the $K-$polystability of a Fano
manifolds it is enough to test it on special test configurations.
Moreover, one also obtains a proof of an analog of a conjecture of
Donaldson concerning ``geodesic stability'' saying that either a
Fano manifold $X$ admits a Kähler-Einstein metrics, or there exists
a geodesic ray along which the Ding functional eventually becomes
strictly decreasing (Theorem \ref{thm:don conj for ding}).

In section \ref{sec:Outlook-on-the} we have included an outlook on
the existence problem on singular Fano varieties, which is thus the
missing piece in the Yau-Tian-Donaldson conjecture for projective
varieties $X$ polarized by $-K_{X}.$ The opposite case of varieties
polarized by $K_{X}$ was very recently established in \cite{b-g},
building on \cite{od}.

\subsection{\label{sub:Further-relations-to}Further relations to previous work}

In the case when $X$ is a smooth Fano manifold Theorem \ref{thm:DF=00003Dding intro}
(and its more precise version Theorem \ref{thm:df=00003Dding}) should
be compared with previous results of Ding-Tian \cite{di-ti} who considered
the case when $\phi^{t}$ is a Bergman geodesic, induced by a fixed
embedding in $\P^{N}$ by $-kK_{X}$ (and a $\C^{*}-$action on $\P^{N}).$
In the case when the central fiber $\mathcal{X}_{0}$ is normal the
results of Ding-Tian say that $DF(\mathcal{X},\mathcal{L})$ is equal
to the asymptotic slope of the Mabuchi functional (without any further
restrictions on the nature of the singularities of $\mathcal{X}_{0}).$
We also recall that in another direction Paul-Tian \cite[I, Cor 1.2]{p-t}
and Phong-Ross-Sturm \cite{p-r-s} considered the case of a general
smooth and positively curved metric $\phi$ on $\mathcal{L}\rightarrow\mathcal{X},$
for a given test configuration $(\mathcal{X},\mathcal{L})$ for a
polarized manifold $(X,L),$ but assumed that the total space $\mathcal{X}$
be smooth and then obtained a formula for $DF(\mathcal{X},\mathcal{L})$
as the slope of the Mabuchi functional plus a correction term which
vanishes if $\mathcal{X}_{0}$ is reduced. 

It may also be illuminating to compare our proof of Theorem \ref{thm:k-poly intro}
with the original approach of Tian \cite{ti1} in the case of a non-singular
Fano variety. As shown by Tian \cite{ti1} assuming that the central
fiber of test configuration is normal, and using the slope formula
of Ding-Tian \cite{di-ti} referred to above, the Donaldson-Futaki
invariant $DF$ is expressed in terms of the asymptotics of \emph{Mabuchi's
K-energy }functional along a one-parameter family $\phi_{k}^{t}$
of Bergman metrics, i.e. a Bergman geodesic. The positivity properties
of $DF$ are then determined using that, in the presence of a Kähler-Einstein
metric, the Mabuchi's K-energy functional is \emph{proper} (if there
are no non-trivial holomorphic vector fields on $X),$ which is the
content of deep result of Tian \cite{ti1}. Here we thus show that
the Mabuchi functional and the Bergman geodesic may be replaced by
the Ding functional and a weak (bounded) geodesic, respectively, and
the properness result with Berndtsson's convexity result. One technical
advantage of the Ding functional is that, unlike the Mabuchi functional,
it is indeed well-defined along a weak geodesic, as previously exploited
in \cite{bern2,bbegz} in the context of the uniqueness problem for
Kähler-Einstein metrics. Thus the approach in this paper is in line
with the programs of Phong-Sturm \cite{p-s} and Chen-Tang \cite{ch}
for calculating Donaldson-Futaki invariants by using (weak) geodesic
rays associated to test configurations.

In the case when $X$ is a smooth Kähler-Einstein Fano variety with
$\mbox{Aut}(X)_{0}$ trivial the properness of the Ding functional
was shown by Tian \cite{ti1} as a consequence of his properness result
for the Mabuchi functional. It was later shown in \cite{p-s-s-w}
that if center of the group $\mbox{Aut}(X)_{0}$ is finite then the
Ding functional is still proper (in an appropriate sense), but the
properness in the case of general Kähler-Einstein manifold is still
open. The generalization of the properness result (even when $\mbox{Aut}(X)_{0}$
is trivial) to singular Fano varieties and more generally log Fano
varieties also appears to be a challenging open problem. Anyway, these
subtle issues are bypassed in the present approach.

\subsubsection*{Organization }

After having recalled some preliminary material in Section \ref{sec:Preliminaries}
the formula relating the Donaldson-Futaki invariant (Theorem \ref{thm:DF=00003Dding intro}
above) to the Ding function is established in Section \ref{sec:Singularity-structure-of}
and then used, by exploiting positivity results for $L^{2}-$type
metrics on direct images, to prove Theorem \ref{thm:k-poly intro}
concerning K-polystability. Section \ref{sub:Singularity-structure-of}
also contains a detailed study of the singularities of the $L^{2}-$type
metrics which is of independent interest, but not needed for the proof
of Theorem \ref{thm:k-poly intro}. In section \ref{sub:Applications-to-bounds}
various ramifications and applications are given to (i) an analog
of conjecture of Donaldson (ii) bounds on the Ricci potential and
Perelman's entropy functional and (iii) the log Fano setting. The
paper is concluded with an outlook in Section \ref{sec:Outlook-on-the}
on the existence problem for Kähler-Einstein metrics on singular Fano
varieties is given.

\subsubsection*{Acknowledgments}

Thanks to Bo Berndtsson, Sébastien Boucksom, Dennis Eriksson, Yuji
Odaka, Julius Ross and Song Sun for helpful discussions and comments.
In particular, thanks to Tomoyuki Hisamoto and David Witt-Nyström
for discussions on norms of test configurations and the relations
to their works \cite{hi} and \cite{n} (compare Remark \ref{rem:Lemma--infty norm}).
Finally, thanks to the referees for many useful comments. This work
was supported by grants from the Swedish Research Council and the
European Research Council.

\section{\label{sec:Preliminaries}Preliminaries }

In this section we will setup the notation and recall the basic tools
to be used in the proofs of the main results.

\subsection{Kähler-Einstein metrics on Fano varieties and log pairs}

\subsubsection{\label{sub:Fano-varieites-and}Fano varieties and log pairs}

Let $X$ be an $n-$dimensional normal compact projective algbraic
variety. In analytic terms normality just means that any holomorphic
function $f$ on $U\cap X_{reg},$ where $X_{reg}$ denotes the regular
locus of $X$ and $U\subset X$ is open, extends holomorphically to
$U$ in the strong sense, i.e. $f$ is the restriction to $U$ of
holomorphic function in $\C^{m}$ under some local holomorphic embedding
$F:\, U\hookrightarrow\C^{m}$ (after perhaps shrinking $U).$ In
particular, $\mbox{codim}(X-X_{reg})\geq2.$ More generally, the corresponding
strong extension property holds for any plurisubharmonic (psh, for
short) function $\phi$ on $U$ (see \cite{b-g} and references therein
for a more detailed discussion of pluripotential theory on singular
analytic spaces). By definition, $X$ is said to be a\emph{ Fano variety}
if the anti-canonical line bundle $-K_{X}:=\det(TX)$ defined on the
regular locus $X_{reg}$ of $X$ extends to an ample $\Q-$line bundle
on $X,$ i.e. there exists a positive integer $m$ such that the $m$th
tensor power $-mK_{X_{reg}}$ extends to an ample line bundle over
$X.$ Since $X$ is normal this equivalently means that the anti-canonical
divisor $-K_{X}$ of $X$ defines an ample $\Q-$line bundle. More
generally we will (in particular in Section \ref{sub:The-logarithmic-setting})
consider \emph{log pairs }$(X,D)$ in the sense of birational geometry
\cite{ko}: i.e. $X$ is normal and $D$ is a $\Q-$divisor on $X$
such that $K_{X}+D$ is a $\Q-$Cartier, i.e. defines a $\Q-$line
bundle (called the log canonical bundle of $(X,D)$. By definition,
a \emph{log resolution} of a log pair $(X,D)$ is a proper birational
morphism $X'\rightarrow X$ such that $p^{*}D+E$ has simple normal
crossings, where $E$ is the exceptional divisor of $p.$ Then 
\begin{equation}
p^{*}(K+D)=K_{X'}+D',\label{eq:pull-back of log can}
\end{equation}
 for a $\Q-$divisor $D'$ on $X'$ (by Hironaka's theorem we may
and will assume that $p$ is an isomorphism away from $p^{-1}(X_{sing}\cup\mbox{Supp}D_{sing}).$
A log pair $(X,D)$ is said to be \emph{log canonical, or lc }for
short\emph{,} if the coefficients $c_{i}$ of $D'$ (along the corresponding
prime divisors) satisfy $c_{i}\leq1.$ Similarly, $(X,D)$ is said
to be \emph{(Kawamata) log terminal, }or\emph{ klt for short, if $c_{i}<1.$
}Setting $D=0$ these notations also apply to the normal variety $X,$
which is thus said to have \emph{log canonical (log terminal) singularities}
if $(X,0)$ is log canonical (log terminal). In practice we will in
what follows only consider Fano varieties $X$ with log terminal singularities
(the corresponding analytical characterization will be recalled below),
but even if $X$ is smooth the notion of log canonical singularities
will be important in the study of test configurations $\mathcal{X}$
for $X.$

\subsubsection{Singular metrics on line bundles and (multi-) sections}

Throughout the paper we will use additive notation for line bundles,
as well as metrics. This means that a metric $\left\Vert \cdot\right\Vert $
on a line bundle $L\rightarrow X$ is represented by a collection
of local functions $\phi(:=\{\phi_{U}\})$ defined as follows: given
a locally trivializing section of $L,$ i.e. a local generator $s_{U}$
of the invertible sheaf $\mathcal{O}(L)$ on an open subset $U\subset X$
we set $\phi_{U}:=-\log\left\Vert s_{U}\right\Vert ^{2},$ where $\phi_{U}$
is upper semi-continuous. It will be convenient to identify the additive
object $\phi$ with the metric it represents. Of course, $\phi_{U}$
depends on $s_{U}$ but the curvature current 
\[
dd^{c}\phi:=\frac{i}{2\pi}\partial\bar{\partial}\phi_{U}
\]
 is globally well-defined on $X$ and represents the first Chern class
$c_{1}(L),$ which with our normalizations lies in the integer lattice
of $H^{2}(X,\R).$ We will say that a (singular) metric $\phi$ is
\emph{psh }(or have\emph{ positive curvature current}), $\phi\in PSH(X,L),$
if $\phi_{U}$ is always psh (and in particular $dd^{c}\phi\geq0$
holds in the sense of currents). By the normality of $X$ the injection
$X_{reg}\hookrightarrow X$ alows us to identity $PSH(X,L)=PSH(X_{reg},L)$
and hence, given a smooth resolution $\pi:\, X'\rightarrow X,$ we
can also identify $PSH(X,L)$ with $PSH(X',\pi^{*}L),$ using the
pull-back $\pi^{*}.$ We will denote by $\mathcal{H}_{b}(X,L)$ the
subspace of $PSH(X,L)$ consisting of all \emph{locally bounded} metrics.
Fixing $\phi_{0}\in\mathcal{H}_{b}(X,L)$ and setting $\omega_{0}:=dd^{c}\phi_{0}$
the map $\phi\mapsto v:=\phi-\phi_{0}$ thus gives an isomorphism
between the space $\mathcal{H}_{b}(X,L)$ and the space $PSH(X,\omega_{0})\cap L^{\infty}(X)$
of all bounded $\omega_{0}-$psh functions, i.e. the space of all
bounded usc functions $v$ on $X$ such that $dd^{c}v+\omega_{0}\geq0.$
Similarly, a metric $\phi$ will be said to be\emph{ smooth }if $\phi_{U}$
is the restriction to $U$ of a smooth function under a local embedding
as above. A special class of smooth metrics with strictly positive
curvature is given by \emph{Bergman metrics}, i.e. metrics of the
form $\phi_{k}/k,$ where $\phi_{k}$ is obtained by restricting the
Fubini-Study metric $\phi_{FS}$ on $\P^{N}$ under a given Kodaira
embedding of $X$ in $\P^{N}$ induced by $kL,$ for some $k$ sufficiently
large.

In particular, if $s$ is a holomorphic section of $L\rightarrow X$
then $\phi_{s}:=\log|s|^{2}$ defines a singular metric on $L$ with
positive curvature current $[D],$ i.e. integration along the zero
divisor of $s$ (taking multiplicities into account). More generally,
it will often be convenient to use the terminiology of holomorphic
\emph{multisections} of $L,$ which by definition consists of a pair
$(r,s_{r}),$ where $r$ is a positive integer $r$ and $s_{r}\in H^{0}(X,rL)$
and where two pairs $(r,s_{r})$ and $(r',s_{r'}),$ where $r'\geq r,$
are identified if there exists a positive integer $p$ such that $r'=pr$
and $s_{r'}=s_{r}^{\otimes p}.$ Denoting by $s$ such an equivalence
class $\phi_{s}:=\frac{1}{r}\log|s_{r}|^{2}$ defines a singular metric
on $L$ with curvature current $[D],$ where $D$ is the $\Q-$divisor
defined by the zero-divisor of $s$ (i.e. $D$ is, by definition,
equal to $1/r$ times the zero divisor of $s_{r}).$ Accordingly we
will occasionally also write $\phi_{D}:=\phi_{s}.$ More generally,
abusing notation slightly, the statement ``let $s$ be a holomorphic
multisection of $L$'' will in the follopwing mean that we tacitly
fix a pair $(r,s_{r})$ defining $s$ and work with the bona fide
section $s_{r}$ and then make the appropriate scalings by $r.$

\subsubsection{\label{sub:canonical measures}Canonical measures }

In the special case when $L=-K_{X}$ any given metric on $\phi\in\mathcal{H}_{b}(X,L)$
induces a measure $\mu_{\phi}$ on $X,$ which may be concretely defined
as follows: if $U$ is a coordinate chart in $X_{reg}$ with local
holomorphic coordinates $z_{1},...,z_{n}$ we let $\phi_{U}$ be the
representation of $\phi$ with respect to the local trivialization
of $-K_{X}$ which is dual to $dz:=dz_{1}\wedge\cdots\wedge dz_{n}.$
Then we define the restriction of $\mu_{\phi}$ to $U\subset X_{reg}$
as
\[
\mu_{\phi}=e^{-\phi_{U}}i^{n^{2}}dz\wedge d\bar{z}
\]
 This expression is readily verified to be independent of the local
coordinates $z$ and hence defines a measure $\mu_{\phi}$ on $X_{reg}$
which we then extend by zero to all of $X.$ Note that since $-K_{X}$
is assumed $\Q-$Cartier we may cover $X$ with a finite number of
open sets $V$ (not necessarily contained in $X_{reg})$ such that
the restriction to $V$ of $\mu_{\phi}$ is given by $1_{X_{reg}}i^{n^{2}}\alpha_{U}\wedge\overline{\alpha_{U}}e^{-\phi_{U}},$
where $\alpha_{U}$ is a trivializing section of $K_{X|U}$ (whose
restriction to $U\cap X_{reg}$ may thus be identified with a holomorphic
$(n,0)-$form) and where $\phi_{U}=-\log\left\Vert s\right\Vert ^{2}$
for $s$ the dual of $\alpha.$ The Fano variety $X$ has\emph{ }log
terminal singularities (as defined above) precisely when the total
mass of $\mu_{\phi}$ is finite for some and hence any $\phi\in\mathcal{H}_{b}(X,L)$
(see Lemma \ref{lem:singular complex sing is lct}). Abusing notation
slightly we will often use the suggestive notation $e^{-\phi}$ for
the measure $\mu_{\phi}.$ This notation is compatible with the usual
notation used in the context of adjoint bundles: if $s$ is a holomorphic
section of $L+K_{X}\rightarrow X$ and $\phi$ is a metric on $L$
then $|s|^{2}e^{-\phi}$ (sometimes written as $i^{n^{2}}s\wedge\bar{s}e^{-\phi})$
may be naturally identified with a measure on $X.$ In particular,
letting $L=-K_{X}$ and taking $s$ to be the canonical section $1$
in the trivial line bundle $L+K_{X}$ gives us back the measure $\mu_{\phi}.$
More generally, if $(X,D)$ is a log pair (see section \ref{sub:The-logarithmic-setting}
below) and $\phi$ is a locally bounded metric on $-(K_{X}+D)$ then
one obtains a measure $\mu_{(X,D,\phi)}$ on $X$ by using the natural
identification between $-(K_{X}+D)$ and $-K_{X}$ on the complement
of the support of $D$ in $X$ and extending by zero to all of $X$
(compare \cite{bbegz}). Abusing notation, we will sometimes write
$\mu_{(X,D;\phi)}=e^{-(\phi+\log|s_{D}|^{2})},$ where $s_{D}$ is
the (multi-) section cutting out $D.$ These constructions are compatible
with taking resolutions $p,$ as in \ref{eq:pull-back of log can}:
if $\phi$ is a metric on $-(K_{X}+D)$ then $p^{*}\phi$ is a metric
on $-(K_{X'}+D')$ and $p_{*}(\mu_{(X',D';p^{*}\phi)})=\mu_{(X,D;\phi)}.$

In the relative setting of a morphism $\pi:\mathcal{X}\rightarrow\C$
from a normal $\Q-$Gorenstein variety such that $\pi$ is smooth
over $\C^{*}$ (or more generally, $\mathcal{X}_{\tau}$ is reduced
and defines a variety $X_{\tau}$ with log terminal singularities
for $\tau\neq0)$ we denote by $K_{\mathcal{X}/\C}:=K_{\mathcal{X}}-\pi^{*}K_{\C}$
the relative canonical line bundle (viewed as a $\Q-$line bundle).
Denoting by $\tau$ the standard affine coordinate on $\C$ we will
use $\pi^{*}d\tau$ to trivialize $\pi^{*}K_{\C}$ over $\mathcal{X}.$
Accordingly, we will identify an element $s\in\mathcal{O}(U,K_{\mathcal{X}/\C})$
with a holomorphic $(n+1)-$form $\alpha$ on $U\cap\mathcal{X}_{reg}$
(i.e. $s=\alpha\otimes\pi^{*}\frac{\partial}{\partial\tau}).$ Moreover,
if $U=\pi^{-1}(V)$ for $V\subset\C^{*},$ then the natural isomorphism
$K_{\mathcal{X}/\C|X_{\tau}}\approx K_{X_{\tau}}$ allows us to identify
the restrictions $s_{\tau}$ of $s\in\mathcal{O}(U,K_{\mathcal{X}/\C})$
to $X_{\tau}$ with a family of holomorphic $n-$forms on $X_{\tau}$
(i.e. $\alpha=s_{\tau}\wedge\pi^{*}d\tau).$ Similarly, replacing
$K_{\mathcal{X}/\C}$ with $\mathcal{L}+K_{\mathcal{X}/\C}$ for a
given line bundle $\mathcal{L}\rightarrow\mathcal{X}$ equipped with
a metric $\phi$ we will use the symbolic notation $|s|^{2}e^{-\phi}$
for the corresponding measures on $U\subset\mathcal{X}$ and $|s_{\tau}|^{2}e^{-\phi_{\tau}}$
for the corresponding family of measures over $V\subset\C^{*}.$

\subsubsection{Kähler-Einstein metrics}

Following \cite{bbegz} $\omega$ is said to be a \emph{Kähler-Einstein
metric} on $X$ if it is Kähler metric on $X_{reg}$ with constant
Ricci curvature, i.e. $\mbox{Ric \ensuremath{\omega=\omega}}$ on
$X_{reg}$ and $\int_{X_{reg}}\omega^{n}=c_{1}(-K_{X})^{n}.$ By  \cite[Lemma 3.6 and  Proposition 3.8]{bbegz}
this equivalently means that the Fano variety $X$ in fact has log
terminal singularities and $\omega$ extends to a Kähler current defined
on the whole Fano variety $X,$ such that $\omega$ is the curvature
current of a locally bounded (and in fact continuous) metric $\phi_{KE}$
on the $\Q-$line bundle $-K_{X}$ such that 
\begin{equation}
(dd^{c}\phi_{KE})^{n}=Ve^{-\phi_{KE}}/\int_{X}e^{-\phi_{KE}}.\label{eq:k-e equatio for phi in def}
\end{equation}
The measure appearing the left hand side above is the\emph{ Monge-Ampère
measure }of $\phi_{KE}$ defined in sense of pluripotential theory,
i.e. using the Bedford-Taylor product between positive closed currents
with locally bounded potentials (see \cite{bbegz} and references
therein for the general singular setting).

\subsection{\label{sub:K-polystability-and-test}K-polystability and test configurations}

Let us start by recalling Donaldson's general definition \cite{d0}
of K-stability of a polarized variety $(X,L)$ generalizing the original
definition of Tian \cite{ti1}. First, a general\emph{ test configuration}
$(\mathcal{X},\mathcal{L},\pi,\rho)$ for $(X,L)$ consists of a scheme
$\mathcal{X}$ with a $\C^{*}-$equivariant flat surjective morphism
$\pi:\,\mathcal{X}\rightarrow\C$ (where the base $\C$ is equipped
with its standard $\C^{*}-$action) and a relatively ample line bundle
$\mathcal{L}\rightarrow\mathcal{X}$ with a $\C^{*}-$action $\rho$
on $\mathcal{L}$ and such that $(X_{1},\mathcal{L}_{|_{X_{1}}})=(X,rL)$
for some integer $r.$ In fact, by allowing $\mathcal{L}$ to be a
$\Q-$line bundle we may as well assume that $r=1.$ More specifically,
following \cite{l-x} we will assume that the total space $\mathcal{X}$
is normal. Then the morphism $\pi$ is automatically flat \cite[Prop 9.7 ]{ha}). 

To simplify the notation we will usually surpress the dependence on
$\pi$ and $\rho$ denote a test configuration by $(\mathcal{X},\mathcal{L}).$
Occasionally we will use the notation $X_{0}$ for the reduction of
the central fiber $\mathcal{X}_{0},$ i.e. the projective variety
$X_{0}$ underlying the scheme theoretic central fiber $\mathcal{X}_{0}.$
For a\emph{ semi-test configuration} we only require that $\mathcal{L}$
be relatively semi-ample. We recall that the total space $\mathcal{X}$
of a test configuration may, using the relative linear systems defined
by $r\mathcal{L}$ for $r$ sufficiently large, be equivariantly embedded
as a subvariety of $\P^{N}\times\C$ so that $r\mathcal{L}$ becomes
the pull-back of the relative $\mathcal{O}(1)-$hyperplane line bundle
over $\P^{N}\times\C.$ We will denote by $\phi_{FS}$ the metric
on $\mathcal{L}$ obtained by restriction of the fiberwise Fubini-Study
metrics on $\P^{N}\times\{\tau\}$ (see \cite[Proposition 3.7]{r-t}
and the beginning of Section 5 \cite{do2} ).

The \emph{Donaldson-Futaki invariant} $DF(\mathcal{X},\mathcal{L})$
of a test configuration is defined as follows: consider the $N_{k}-$dimensional
space $H^{0}(\mathcal{X}_{0},k\mathcal{L}_{|\mathcal{X}_{0}})$ over
the central fiber $\mathcal{X}_{0}$ and let $w_{k}$ be the weight
of the $\C^{*}-$action on the complex line $H^{0}(\mathcal{X}_{0},k\mathcal{L}_{|\mathcal{X}_{0}}).$
Then the Donaldson-Futaki invariant of $DF(\mathcal{X},\mathcal{L})$
is defined as minus the sub-leading coefficient in the expansion of
$w_{k}/kN_{k}$ in powers of $1/k$ (up to normalization): 
\[
\frac{w_{k}(\det H^{0}(\mathcal{X}_{0},k\mathcal{L}_{|\mathcal{X}_{0}}))}{kN_{k}}=c_{0}-\frac{1}{k}\frac{1}{2}DF(\mathcal{X},\mathcal{L})+O(\frac{1}{k^{2}}),
\]
 where $N_{k}:=\dim(H^{0}(X_{0},kL_{0}).$ The polarized variety $(X,L)$
is said to be\emph{ K-semistable} if, for any test configuration,
$DF(\mathcal{X},\mathcal{L})\geq0$ and \emph{K-(poly)stable} if moreover
equality holds iff $(\mathcal{X},\mathcal{L})$ is (equivariantly)
isomorphic to $(X\times\C,p_{1}^{*}L)$. We also recall that $(X,L)$
is said to be\emph{ $K-$unstable }if it is not K-semistable, i.e.
there exists a \emph{destabilizing test configuration} in the sense
that $DF(\mathcal{X},\mathcal{L})<0.$
\begin{example}
\label{ex:product test}Let $V$ be a holomorphic vector field on
$X$ of type $(1,0)$ with a fixed lift to $L\rightarrow X.$ We will
say that $V$ \emph{generates a $\C^{*}-$action on $X,$ }denoted
by $\rho_{(X,V)}$, if $\frac{d}{dt}\rho_{(X,V)}(e^{-t/2})=e^{t\mbox{Re}V}$,
for $t\in\R$ (in other words the standard additive group $(\C,+)$
action on $X$ determined by the complex flow of $V$ descends to
a multiplicative action of $\C^{*}$ on $(X,L)$ under the homomorphism
$\C\rightarrow\C^{*},t\mapsto e^{-t/2}).$ Such a vector field $V$
determines a product test configuration $(\mathcal{X},\mathcal{L},\pi,\rho)$
by setting $(\mathcal{X},\mathcal{L},\pi)=(X\times\C,\mathcal{L=}p_{1}^{*}L,p_{2})$
and defining the action $\rho:\C^{*}\times\mathcal{X}\rightarrow\mathcal{X}$
by $(\lambda,(x,\tau))\mapsto(\rho_{V}(x),\lambda\tau).$ Note that
the original action of $\rho_{(X.V)}$ on $X$ may be identified with
the restricted action of $\rho$ on $\mathcal{X}_{0}$ and $DF(\mathcal{X},\mathcal{L},\rho)$
coincides with Futaki's invariant $F(X,V).$ Since, $F(X,V)=-F(X,V)$
a necessary condition for the K-polystability of $(X,L)$ is that
$DF(\mathcal{X},\mathcal{L},\rho)=0$ for any $V$ as above and a
necassary condition for K-stability is that $X$ admits no holomorphic
vector fields as above (since, $(\mathcal{X},\mathcal{L},\rho)$ is
equivariantly isomorphic to $(\mathcal{X},\mathcal{L},\rho_{triv}),$
where $\rho_{triv}(\lambda,(x,\tau)=(x,\lambda\tau),$ iff $V=0).$ 
\end{example}
In this paper we will be concerned with test configurations $(\mathcal{X},\mathcal{L})$
for a Fano variety with its anti-canonical polarization, i.e. $X$
is a Fano variety and $L=-K_{X}$ so that the restriction of $\mathcal{L}$
to the complement $\mathcal{X}^{*}$ of the central fiber coincides
with the $\Q-$line bundle defined by the dual of the relative canonical
divisor $K_{\mathcal{X}^{*}/\C}:=K_{\mathcal{X}^{*}}-\pi^{*}K_{\C}.$

\subsubsection{\label{sub:Special-test-configurations}$\Q-$Gorenstein and special
test configurations}

In general, $K_{\mathcal{X}/\C}$ does not extend as a $\Q-$line
bundle over the central fiber, but following \cite{l-x} we say that
a normal variety $\mathcal{X}$ with a $\C^{*}-$equivariant surjective
morphism $\pi$ to $\C$ is a\emph{ special test configuration for
the Fano variety $X$} if $\mathcal{X}_{1}=X,$ the total space $\mathcal{X}$
is $\Q-$Gorenstein and the central fiber is reduced and irreducible
and defines a Fano variety $X_{0}$ with log terminal singularities.
\begin{lem}
\label{lem:char of special test}Let $(\mathcal{X},\mathcal{L})$
be a general test configuration (with a priori non-normal total space)
for $(X,-K_{X}),$ where $X$ is a Fano variety. Assume that the central
fiber $\mathcal{X}_{0}$ is normal. Then $\mathcal{X}$ and $\mathcal{X}_{0}$
are both normal $\Q-$Gorenstein varieties and $\mathcal{L}_{|\mathcal{X}_{0}}$
is isomorphic to $-K_{\mathcal{X}_{0}},$ i.e. $\mathcal{L}$ is isomorphic
to $-K_{\mathcal{X}/\C}.$ Moreover, if $X_{0}$ has log terminal
singularities, then so has $\mathcal{X}.$ In other words, a test
configuration is special iff the central fiber is reduced and the
underlying variety $X_{0}$ has log terminal singularities.\end{lem}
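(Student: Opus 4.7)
The plan is as follows. First, I will deduce normality of $\mathcal{X}$ from normality of the central fiber $\mathcal{X}_{0}$ together with flatness of $\pi$ over a regular one-dimensional base. Concretely, Serre's criterion requires $R_{1}$ and $S_{2}$: since $\pi$ is flat with $S_{2}$ fibers (normality implies $S_{2}$), the total space is $S_{2}$; and since the singular locus of each fiber has codimension at least two, flatness with equidimensional fibers guarantees that the singular locus of $\mathcal{X}$ has codimension at least two in $\mathcal{X}$, giving $R_{1}$. This is the standard fibral openness of normality (EGA IV). Zariski's connectedness theorem then forces $\mathcal{X}_{0}$ to be connected, hence irreducible (being normal).

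Next I will identify $\mathcal{L}$ with $-K_{\mathcal{X}/\C}$ as $\Q$-line bundles. Over $\mathcal{X}^{*}:=\mathcal{X}\setminus\mathcal{X}_{0}$ the $\C^{*}$-action trivializes the family to $X\times\C^{*}$, and the relative anti-canonical $\Q$-line bundle agrees there with $\mathcal{L}$, since $\mathcal{L}|_{X}=-K_{X}$ by hypothesis. Once $\mathcal{X}$ is known to be normal, $K_{\mathcal{X}/\C}$ makes sense as a $\Q$-Weil divisor, and the difference $\mathcal{L}+K_{\mathcal{X}/\C}$ is a $\Q$-Weil divisor supported on $\mathcal{X}_{0}$. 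Irreducibility of $\mathcal{X}_{0}$ forces this difference to equal $a\mathcal{X}_{0}$ for some $a\in\Q$. But $\mathcal{X}_{0}=\pi^{*}\{0\}$ is a principal Cartier divisor, so $\mathcal{O}_{\mathcal{X}}(\mathcal{X}_{0})$ is trivial, whence $\mathcal{L}\cong-K_{\mathcal{X}/\C}$ as $\Q$-line bundles. In particular $K_{\mathcal{X}/\C}$, and hence $K_{\mathcal{X}}$, is $\Q$-Cartier, so $\mathcal{X}$ is $\Q$-Gorenstein. Adjunction (legitimate since $\mathcal{X}_{0}$ is Cartier with trivial conormal bundle) then yields $\mathcal{L}|_{\mathcal{X}_{0}}\cong-K_{\mathcal{X}_{0}}$, so $\mathcal{X}_{0}$ is also $\Q$-Gorenstein.

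Finally, assume $\mathcal{X}_{0}$ has klt singularities. Since $\mathcal{X}_{0}$ is a normal Cartier divisor on $\mathcal{X}$ (so the different vanishes), inversion of adjunction implies that the pair $(\mathcal{X},\mathcal{X}_{0})$ is plt in a neighborhood of $\mathcal{X}_{0}$. The identity $a(E,\mathcal{X},0)=a(E,\mathcal{X},\mathcal{X}_{0})+\mathrm{ord}_{E}(\mathcal{X}_{0})$ then gives $a(E,\mathcal{X},0)>-1$ for every divisor $E$ over $\mathcal{X}$ with center in $\mathcal{X}_{0}$, so $\mathcal{X}$ is klt near $\mathcal{X}_{0}$; away from $\mathcal{X}_{0}$ one has $\mathcal{X}^{*}\cong X\times\C^{*}$, which is klt by the standing assumption that $X$ is a Fano variety with klt singularities. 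Thus $\mathcal{X}$ is klt globally, and this together with $\mathcal{X}_{0}$ reduced and klt matches the definition of a special test configuration. The main technical obstacle lies in the second step: the $\Q$-Cartier bookkeeping, and specifically the identification of $\mathcal{L}$ with $-K_{\mathcal{X}/\C}$ across the central fiber, crucially uses both normality of $\mathcal{X}$ (only just established) and irreducibility of $\mathcal{X}_{0}$ to rule out any nontrivial twist concentrated on $\mathcal{X}_{0}$.
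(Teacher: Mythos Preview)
Your proof is correct and follows essentially the same route as the paper: normality of $\mathcal{X}$ from normality of the fibers over a regular one-dimensional base, the identification $\mathcal{L}\cong-K_{\mathcal{X}/\C}$ via irreducibility of $\mathcal{X}_{0}$ together with triviality of $\mathcal{O}_{\mathcal{X}}(\mathcal{X}_{0})$, and inversion of adjunction for the klt claim. You are in fact slightly more careful than the paper in one place: the paper invokes irreducibility of $\mathcal{X}_{0}$ as if it were assumed, whereas you correctly deduce it from normality plus Zariski's connectedness theorem.
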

\begin{proof}
This is essentially well-known, but for completeness we provide a
proof (thanks to Yuji Odaka for his help in this matter). It follows
from general commutative algebra that if $\pi:\,\mathcal{X}\rightarrow\C$
is a morphism projective and flat over $\C,$ with normal fibers,
then $\mathcal{X}$ is also normal \cite[Theorem 1.101 ]{g et al}.
In particular, the canonical divisor $K_{\mathcal{X}}$ is a well-defined
Weil divisor. By assumption $-K_{\mathcal{X}}$ and $\mathcal{L}$
are $\Q-$Cartier and linearly equivalent on $\mathcal{X}^{*}$ and
hence $K_{\mathcal{X}}+\mathcal{L}$ is linearly equivalent to a Weil
$\Q-$divisor $D$ supported in the central fiber. But the central
fiber is Cartier (since it is cut out by the function $\pi^{*}\tau)$
and hence, since it is assumed irreducible $-m(K_{\mathcal{X}}+\mathcal{L})$
is linearly equivalent to a multiple of $\mathcal{X}_{0},$ which
means that $-mK_{\mathcal{X}}$ is a sum of Cartier divisors, hence
Cartier, i.e. $\mathcal{X}$ is $\Q-$Gorenstein. More precisely,
$-mK_{\mathcal{X}}$ is linearly equivalent to $\mathcal{L}$ modulo
a pull back from the base and thus it follows from adjunction that
the restriction of $\mathcal{L}$ to $\mathcal{X}_{0}$ is linearly
equivalent to $-mK_{\mathcal{X}_{0}},$ which concludes the proof
of the first statement. Finally, if $\mathcal{X}_{0}$ has log terminal
singularities it  follows from inversion of adjunction that $\mathcal{X}$
also has log terminal singularities \cite[Theorem 7.5]{ko} (see also
the beginning of Section \ref{sec:Singularity-structure-of}). 
\end{proof}
Since the Donaldson-Futaki is independent of the lift of the $\C^{*}-$action
on $\mathcal{X}$ we may and will in the case when $\mathcal{L}:=-K_{\mathcal{X}/\C}$
assume that the $\C^{*}-$action on $\mathcal{L}:=-K_{\mathcal{X}/\C}$
is the canonical lift of the $\C^{*}-$action on $\mathcal{X}$ to
$-K_{\mathcal{X}/\C}.$

\subsection{\label{sub:Deligne-pairings-and}Deligne pairings, the energy functional
$\mathcal{E}$ and the line bundle $\eta$}

The Donaldson-Futaki invariant may also be expressed in terms of Deligne
pairings \cite{p-r-s} (also called intersection bundles \cite{m}).
First recall that if $\pi:\,\mathcal{X}\rightarrow B$ is a proper
flat projective morphism of relative dimension $n$ (between normal
schemes) and $L_{0},...,L_{n}$ are line bundles over $\mathcal{X}$
then the Deligne pairing $\left\langle L_{0},...,L_{n}\right\rangle $
is a line bundle over $B,$ which depends in a multilinear fashion
on $L_{i}$ \cite{zh,na} and satisfies 
\[
c_{1}\left\langle L_{0},...,L_{n}\right\rangle =\pi_{*}\left(c_{1}(L_{0})\wedge\cdots\wedge c_{1}(L_{n})\right)
\]
In particular, if $B$ is a non-singular projective curve then 
\begin{equation}
\deg\left\langle L_{0},...,L_{n}\right\rangle =L_{0}\cdots L_{n}\label{eq:deg of deligne is inters}
\end{equation}
In our case $\mathcal{X}$ will be a normal variety (defined over
$\C)$ and $B=\C.$ Given Hermitian metrics $\phi_{0},...,\phi_{n}$
on $L_{0},...,L_{n}$ there is a natural Hermitian metric $\left\langle \phi_{0},...,\phi_{n}\right\rangle $
on $\left\langle L_{0},...,L_{n}\right\rangle $ \cite{zh} which
has the following fundamental properties%
\footnote{Following \cite{e} the construction in \cite{zh} seems to require
that $\pi$ be Cohen-Macaulay in order to define the metric on $\left\langle L_{0},...,L_{n}\right\rangle $
by induction over the dimension $n.$ Anyway all our arguments will
be carried out on a non-singular resolution of $\mathcal{X}$ where
the constructions in \cite{zh,e} apply. %
}: 
\begin{enumerate}
\item Its curvature is given by 
\begin{equation}
dd^{c}\left\langle \phi_{0},...,\phi_{n}\right\rangle =\pi_{*}(dd^{c}\phi_{1}\wedge\cdots\wedge dd^{c}\phi_{n})\label{eq:curvature of the deligne pairing}
\end{equation}

\item If $\phi$ and $\psi$ are metrics in $\mathcal{H}(L)$ with $\left\langle \phi\right\rangle $
and $\left\langle \psi\right\rangle $ denoting the induced metrics
on the top Deligne pairing $\left\langle L,...,L\right\rangle $ in
the absolute case when $B$ is a point, then we have the following
``change of metric formula'': 
\[
\left\langle \phi\right\rangle -\left\langle \psi\right\rangle =\sum_{j=0}^{n}\int_{X}(\phi-\psi)(dd^{c}\phi)^{n-j}\wedge(dd^{c}\psi)^{j}
\]

\end{enumerate}
In order to define $\left\langle \phi\right\rangle $ for $\phi$
merely locally bounded, i.e. in $\mathcal{H}_{b}(L),$ we fix a reference
metric $\psi_{0}$ in $\mathcal{H}(L)$ and first set 
\begin{equation}
\mathcal{E}(\phi):=\mathcal{E}(\phi,\psi_{0}):=\frac{1}{(n+1)}\sum_{j=0}^{n}\int_{X}(\phi-\psi)(dd^{c}\phi)^{n-j}\wedge(dd^{c}\psi)^{j}\label{eq:def of energy}
\end{equation}
The functional $\mathcal{E}(\phi,\psi)$ is well-defined and finite
for any $\phi,\psi\in\mathcal{H}(L)_{b},$ using the Bedford-Taylor
product between the corresponding currents, and the functional $\mathcal{E}(\phi)$
on $\mathcal{H}_{b}(L)$ coincides with the restriction to $\mathcal{H}_{b}(L)$
of the functional $E$ in \cite[Section 1.4]{bbgz,bbegz} defined
on the whole space of singular metrics on $L$ with positive curvature
current. In particular, the functional $\mathcal{E}(\phi)$ is a primitive
for the one-form on $\mathcal{H}_{b}(L)$ defined by the Monge-Ampère
measure, i.e. 
\begin{equation}
\frac{d}{dt}_{|t=0}\mathcal{E}(\phi_{0}(1-t)+\phi_{1}t)=\int(\phi_{1}-\phi_{0})(dd^{c}\phi_{0})^{n}\label{eq:variational prop of energy}
\end{equation}
Now, for any $\phi\in\mathcal{H}_{b}(L)$ we can simply define the
corresponding metric $\left\langle \phi\right\rangle $ on the Deligne
pairing by
\[
\left\langle \phi\right\rangle :=\left\langle \psi\right\rangle +(n+1)\mathcal{E}(\phi,\psi)
\]
 for any fixed $\psi\in\mathcal{H}(L).$ It follows immediately from
the cocycle formula for $\mathcal{E}(\phi,\psi)$ (which in turn follows
from the variational property \ref{eq:variational prop of energy})
that $\left\langle \phi\right\rangle $ is independent of the choice
of $\psi$ and still satisfies the change of metric formula above,
i.e. 

\begin{equation}
\left\langle \phi\right\rangle -\left\langle \psi\right\rangle =(n+1)\mathcal{E}(\phi,\psi)\label{eq:change of metric formula as energy}
\end{equation}
Similarly, the first property 1 above also holds in the singular setting
of locally bounded metrics, by approximation, since the Bedford-Taylor
product is local and continuous under local decreasing limits. 
\begin{rem}
More generally, by the results in \cite{bbgz} the metric $\phi_{D}$
can be defined as long as $\phi_{0},...,\phi_{n}$ are in the finite
energy space $\mathcal{E}^{1}(X,L),$ but the locally bounded setting
above will be adequate for our purposes. 
\end{rem}
Let us now come back to the general setting of a test configuration
$\mathcal{L}\rightarrow\mathcal{X}\rightarrow\C$ for a polarized
variety $(X,L).$ Under appropriate regularity assumptions it was
shown in \cite{p-r-s} that the Donaldson-Futaki invariant of a test
configuration $(\mathcal{X},\mathcal{L})$ is the weight over $0:$
\begin{equation}
DF(\mathcal{X},\mathcal{L})=w_{0}(\eta)\label{eq:dfut as weight in text}
\end{equation}
 of the following $\Q-$line bundle over $\C:$ 
\begin{equation}
\eta:=\frac{1}{(n+1)L^{n}}\left(\mu\left\langle \mathcal{L},...,\mathcal{L}\right\rangle -(n+1)\left\langle -K_{\mathcal{X}/\C},\mathcal{L}...,\mathcal{L}\right\rangle \right)\,\,\,,\mu:=n(-K_{X})\cdot L^{n-1}/L^{n}\label{eq:def of eta}
\end{equation}
 where in the case when $L=-K_{X}$ we have $\mu=n.$ More precisely,
it was shown in \cite{p-r-s} that, up to natural isomorphisms, the
Knudson-Mumford expansion of the determinant line bundle $\det(\pi_{*}(kL))\rightarrow\C$
(with fibers $\det H^{0}(X_{\tau},kL_{\tau}))$ satisfies
\[
\det(\pi_{*}(kL))/kN_{k}=\frac{1}{(n+1)L^{n}}\left\langle \mathcal{L},...,\mathcal{L}\right\rangle -\frac{1}{k}\frac{1}{2}\eta+O(\frac{1}{k^{2}})
\]
 and $\eta$ is thus naturally isomorphic to Tian's CM-line bundle
\cite{p-t}. The proofs in \cite{p-r-s} were carried out under the
assumption that the total space $\mathcal{X}$ and the central fiber
$X_{0}$ be non-singular (in particular, there are no multiple fibers),
but as pointed out in \cite{p-r-s} the regularity assumptions can
be relaxed. Here we observe that the formula \ref{eq:dfut as weight in text}
holds as long as $\mathcal{X}$ is $\Q-$Gorenstein as a consequence
of the intersection theoretic expression for the Donaldson-Futaki
invariant recalled in the following proposition. The expression involves
the ``$\C^{*}-$equivariant compactification'' $\bar{\mathcal{L}}\rightarrow\bar{\mathcal{X}}\rightarrow\P^{1}$
of $\mathcal{L}\rightarrow\mathcal{X}\rightarrow\C$ obtained by using
the given $\C^{*}-$action $\rho$ on $(\mathcal{X},\mathcal{L})$
to identify $(\mathcal{X}^{*},\mathcal{L}_{|\mathcal{X}^{*}})$ with
$(X\times\C,p_{1}^{*}L),$ which trivially extends to $(X\times\P^{1}-\{0\},p_{1}^{*}L)$
so that $\bar{\rho}$ acts trivially over $\infty\in\P^{1}.$
\begin{prop}
\label{prop:wang}Let $(\mathcal{X},\mathcal{L},\rho)$ be a test
configuration (in particular, $\mathcal{X}$ is normal) and denote
by $\bar{\mathcal{L}}\rightarrow\bar{\mathcal{X}}\rightarrow\P^{1}$
the corresponding $\C^{*}-$equivariant compactification over $\P^{1}.$
Then\textup{ 
\begin{equation}
(n+1)L^{n}(DF(\mathcal{X},\mathcal{L}))=\mu\mathcal{\bar{\mathcal{L}}}\cdot\mathcal{\bar{\mathcal{L}}}\cdots\mathcal{\bar{\mathcal{L}}}+(n+1)K_{\bar{\mathcal{X}}/\P^{1}}\cdot\mathcal{\mathcal{\bar{\mathcal{L}}}}\cdots\mathcal{\mathcal{\bar{\mathcal{L}}}},\label{eq:df as intersection}
\end{equation}
where $K_{\bar{\mathcal{X}}/\P^{1}}$ is the relative canonical divisor
(viewed as a Weil divisor). }
\end{prop}
The previous formula is shown in \cite[Proposition 17]{w} under the
assumption that $\mathcal{X}$ is $\Q-$Gorenstein, so that $K_{\mathcal{X}/\C}$
is well-defined as a $\Q-$line bundle and in \cite[Theorem 3.2]{od}
for $\mathcal{X}$ normal (or more generally, Gorenstein in codimension
one. See also \cite[Proposition 6]{l-x} for a simple direct proof. 
\begin{prop}
\label{pro:(Phong-Ross-Sturm)-:-The}Let $(\mathcal{X},\mathcal{L})$
be a test configuration such that $\mathcal{X}$ is $\Q-$Gorenstein.
Then $DF(\mathcal{X},\mathcal{L})=w_{0}(\eta),$ where $\eta$ is
the $\Q-$line bundle defined by formula \ref{eq:def of eta}.\end{prop}
\begin{proof}
If $K_{\bar{\mathcal{X}}}$ is well-defined as a $\Q-$Cartier divisor
(i.e. as a $\Q-$line bundle) then it follows from the previous proposition
and the standard push-forward formula that 
\[
(n+1)L^{n}(DF(\mathcal{X},\mathcal{L}))=\int_{\P^{1}}\mu\pi_{*}(\mu(c_{1}(\mathcal{\bar{\mathcal{L}}})^{n+1})+(n+1)c_{1}(K_{\bar{\mathcal{X}}/\P^{1}})\wedge c_{1}(\mathcal{\bar{\mathcal{L}}})^{n}),
\]
 which, according to \ref{eq:deg of deligne is inters} coincides
with the degree of the corresponding sum $\bar{\eta}\rightarrow\P^{1}$
of extended Deligne pairings. But, this is nothing but the weight
over $0$ of the $\C^{*}-$action on $\eta_{|\tau=0}.$ Indeed, if
$F$ is a line bundle over $\P^{1}$ equipped with a $\C^{*}-$action
covering the standard action on $\P^{1}$ (viewed as a compactification
of $\C^{*})$ acting trivially on the line $F_{|\infty},$ then 
\begin{equation}
w_{0}(F)=\deg(F),\label{eq:weight as degree}
\end{equation}

\end{proof}

\subsubsection{A singular Kempf-Ness formula for weights involving the Lelong number}

Next we give a generalization of the Kempf-Ness type formula for the
weight appearing in Geometric Invariant Theory \cite{k-n}, which
is essentially equivalent to Lemma 6 in \cite{p-r-s}. Its formulation
involves the classical notion of a \emph{Lelong number} $l_{0}(\Phi)$
at zero of a subharmonic function $\Phi$ on the unit-disc in $\C,$
which may be defined as the sup over all numbers $\lambda$ such that
$\Phi(\tau)\leq\lambda\log|\tau|^{2}$ close to $\tau=0$ (equivalently,
$l_{0}(\Phi)$ is the mass of curvature current of $\Phi$ at the
origin: 
\begin{equation}
l_{0}(\Phi)=\int_{\{0\}}(dd^{c}\Phi).\label{eq:lelong as mass}
\end{equation}

\begin{lem}
\label{lem:Hilbert-Mumford}Let $F$ be a line bundle over the unit-disc
$\Delta$ in $\C$ equipped with a $\C^{*}-$action $\rho$ compatible
with the standard one on $\Delta$ and fix an $S^{1}-$invariant metric
$\Phi$ on $F$ with positive curvature current. Then the weight $w_{0}$
of the $\C^{*}$ action on the complex line $F_{0}$ is given by the
following formula involving right derivatives: 
\begin{equation}
w_{0}=-\lim_{t\rightarrow\infty}\frac{d}{dt}\log\left\Vert \rho(\tau)s_{1}\right\Vert _{\Phi}^{2}+l_{0}(\Phi)\label{eq:w as deric}
\end{equation}
 for $t=-\log|\tau|^{2}$ and $s_{1}$ a fixed element in the complex
line $F_{1}.$\end{lem}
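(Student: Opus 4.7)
The plan is to reduce the statement to a one-variable convex-analysis computation by first producing a $\C^{*}$-equivariant holomorphic frame for $F$ near the fixed fiber $F_{0}$. Concretely, the key preliminary step is to show that, after possibly shrinking $\Delta$, there is a nowhere-vanishing holomorphic frame $e$ of $F$ satisfying
\[
\rho(\tau)\bigl(e(z)\bigr)=\tau^{w}e(\tau z),\qquad (z,\tau)\in\Delta\times\C^{*},
\]
where $w$ is the weight of the $\C^{*}$-action on $F_{0}$. To produce such an $e$, I would start from an arbitrary trivialization $F\cong\Delta\times\C$ in which the action reads $\rho(\tau)(z,v)=(\tau z,A(z,\tau)v)$ with $A(0,\tau)=\tau^{w}$, and modify the frame by a nowhere-vanishing holomorphic $g$ solving the multiplicative cocycle equation $g(\tau z)/g(z)=A(z,\tau)\tau^{-w}$. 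Differentiating at $\tau=1$ converts this into a linear first-order ODE of the form $zg'(z)/g(z)=p(z)$ with $p(0)=0$, so that $p(z)/z$ is holomorphic and the ODE has an explicit local solution; the $\C^{*}$-cocycle relation for $A$ then propagates the infinitesimal identity to the full finite equation.

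Second, in this adapted frame I write $s(z)=f(z)e(z)$ for $f$ holomorphic and let $\phi$ be the local weight of $\Phi$, so that $\|e(z)\|_{\Phi}^{2}=e^{-\phi(z)}$. Interpreting $\rho(\tau)s=\rho(\tau)\bigl(s(1)\bigr)\in F_{\tau}$, the equivariance of $e$ yields $\rho(\tau)\bigl(s(1)\bigr)=f(1)\,\tau^{w}\,e(\tau)$, whence
\[
\log\|\rho(\tau)s\|_{\Phi}^{2}=2\log|f(1)|+w\log|\tau|^{2}-\phi(\tau).
\]
By $S^{1}$-invariance one has $\phi(\tau)=\tilde\phi(\log|\tau|^{2})$ for a single-variable function $\tilde\phi$, and in this rotationally invariant setting positivity of the curvature current $dd^{c}\phi\ge 0$ is precisely convexity of $\tilde\phi$. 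With $t=-\log|\tau|^{2}$ (as in Theorem \ref{thm:DF=00003Dding intro}), differentiation gives
\[
\frac{d}{dt}\log\|\rho(\tau)s\|_{\Phi}^{2}=-w+\tilde\phi'(-t).
\]

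Finally, the limit as $t\to\infty$ is a standard convex-analysis computation: the one-sided derivative $\tilde\phi'(s)$ is non-decreasing in $s$, so $\lim_{s\to-\infty}\tilde\phi'(s)$ exists and coincides with the asymptotic slope $\lim_{s\to-\infty}\tilde\phi(s)/s$, which by the very definition of the Lelong number equals $l_{\Phi}(0)$; the bound $l_{\Phi}(0)\ge 0$ is the classical lower bound for Lelong numbers of psh functions. Passing to the limit and negating gives $-\lim_{t\to\infty}\frac{d}{dt}\log\|\rho(\tau)s\|_{\Phi}^{2}=w-l_{\Phi}(0)$, as required. The main obstacle is the equivariant linearization in the first paragraph; once the action is in diagonal form, the remainder is a transparent convexity argument on the single-variable function $\tilde\phi$.
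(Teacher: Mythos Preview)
Your proposal is correct and follows essentially the same route as the paper's ``direct proof'' sketched in the second half of its argument: trivialize so that $\rho(\tau)s=\tau^{w}$ in a frame near $0$, write the local weight $\phi$ as a convex function of $\log|\tau|^{2}$ via $S^{1}$-invariance, and read off the asymptotic slope as the Lelong number. The paper's primary argument differs only cosmetically, comparing $\Phi$ with a smooth $S^{1}$-invariant reference metric $\Phi_{0}$ (for which the smooth case is invoked as the classical Hilbert--Mumford computation) and then identifying the correction $-\lim_{t\to\infty}\frac{d}{dt}(\Phi-\Phi_{0})$ as the Lelong number; this is the same convex-analysis step you isolate. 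Your treatment of the equivariant linearization via the cocycle/ODE is more elaborate than what the paper does---it simply asserts the existence of such a trivialization as standard---so that first paragraph, while correct, is more than is strictly needed. One small point: the lemma as stated uses $t=-\log|\tau|$, whereas you (following Theorem~\ref{thm:DF=00003Dding intro}) use $t=-\log|\tau|^{2}$; the paper is not entirely consistent on this, but you should flag the normalization to avoid a stray factor of $2$.
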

\begin{proof}
This can be proved exactly as in Lemma 6 in \cite{p-r-s}, using that
$l_{0}(\phi)=-\lim_{t\rightarrow\infty}\frac{d}{dt}\phi(e^{-t/2})$
if $\phi$ is subharmonic on $\Delta$ and $S^{1}-$invariant. Alternatively,
a highbrow proof can be given as follows, using the equivariant compactification
$\bar{F}\rightarrow\P^{1},$ as in the discussion preceeding Proposition
\ref{prop:wang} (and by extending $\Phi$ to a metric on $\bar{F}\rightarrow\P,$
smooth close to $\infty\in\P^{1}):$ the section $s_{\tau}:=\rho(\tau)s_{1}$
defines a trivializing holomorphic section of $\bar{F}\rightarrow\C$
and hence, setting $v(\tau):=-\log\left\Vert \rho(\tau)s_{1}\right\Vert _{\Phi}^{2}$
on $\C^{*}$ we can decompose $\deg\bar{F}=\int_{\P^{1}}dd^{c}\Phi=\int_{\C^{*}}dd^{c}v+\int_{\{0\}}dd^{c}\Phi.$
But $\int_{\C}dd^{c}v(\tau)=\int_{-\infty}^{\infty}d(\frac{dv(e^{-t/2})}{dt})=\lim_{t\rightarrow\infty}dv(t)/dt-0,$
which concludes the proof using formulae \ref{eq:lelong as mass}
and \ref{eq:weight as degree}. 
\end{proof}

\subsection{\label{sub:The-Monge-Amp=0000E8re-equation}The Monge-Ampère equation
on $\mathcal{X}$ and geodesic rays}

Next we explain how to attach a canonical metric on the line bundle
$\mathcal{L}\rightarrow\mathcal{X}$ over a test configuration to
a given metric $\phi$ on $L\rightarrow X$ and the relation to weak
geodesic rays. This builds on ideas introduced in the work of Phong-Sturm
\cite{p-s,p-s1b} and Chen-Tang \cite{ch}.

Let $(X,L)$ be a polarized normal variety and $(\mathcal{X},\mathcal{L},\rho)$
a test configuration for $X$ (recall that $\mathcal{X}$ is assumed
normal). Denote by $M$ the variety with boundary obtained by restricting
$\mathcal{X}$ to the unit-disc $\Delta\subset\C.$ Given a locally
bounded metric $\phi_{1}$ with positive curvature on $L$ we let
$\phi$ be the metric on $\mathcal{L}\rightarrow M$ defined as the
following envelope: 
\begin{equation}
\phi:=\sup\{\psi:\,\,\,\psi\leq\phi_{1}\,\mbox{on\,\ensuremath{\partial M\}}}\label{eq:def of envelope}
\end{equation}
where $\psi$ ranges over all locally bounded metrics with positive
curvature form on $\mathcal{L}\rightarrow M$ and $\phi_{1}$ is identified
with the $S^{1}-$invariant metric on $\partial M$ induced by the
given metric (since we are not a priori assuming that $\psi$ is continuous
the boundary condition above means that, locally, $\limsup_{z_{i}\rightarrow z}\psi(z_{i})\leq\phi_{1}(z)$
for any sequence $z_{i}$ approaching a boundary point $z).$ Occasionally,
we will use the logarithmic real coordinate $t=-\log|\tau|^{2}$ on
the punctured disc $\Delta^{*}.$ We note that since $X$ is identified
with the fiber $X_{1}$ of $\mathcal{X}$ we can use the action $\rho$
to identify the metrics $\phi_{\tau}$ on $X_{\tau}$ with a curve
of metric 
\begin{equation}
\phi^{t}:=\rho(\tau)^{*}\phi_{\tau},\,\,\,\,\, t:=-\log|\tau|^{2}\label{eq:def of geodesic ray as pull-back}
\end{equation}
on $L.$ Next we will show that the metric $\phi$ above can be seen
as a solution to a Dirichlet problem for the Monge-Ampère operator
on $M.$ In fact, it will be convenient to formulate the result for
any test configuration:
\begin{prop}
\label{prop:reg for ma-eq}Let $(\mathcal{X},\mathcal{L})$ be a test
configuration for the polarized variety $(X,L)$ with normal total
space $\mathcal{X}.$ Then the following holds:
\begin{itemize}
\item $\phi$ is $S^{1}-$invariant
\item $\phi$ is locally bounded with positive curvature current and upper
semi-continuous in $M$
\item $\phi_{\tau}\rightarrow\phi_{1}$ uniformly as $|\tau|\rightarrow1$
(with respect to any fixed trivializing of $\mathcal{L}$ close to
a given boundary point). 
\item In the interior of $M$ we have that $(dd^{c}\phi)^{n+1}=0$ in the
sense of pluripotential theory.
\end{itemize}
\end{prop}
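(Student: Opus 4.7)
The plan is to treat $\phi$ as the Perron envelope for a Dirichlet problem associated to the Monge-Amp\`ere operator on $M$, and to apply standard pluripotential-theoretic arguments (in the spirit of Bedford-Taylor) with a careful choice of reference metric. The crucial preliminary input is a smooth strictly positively curved reference metric $\phi_0$ on $\mathcal{L}\to M$, which is produced as follows: by the relative ampleness of $\mathcal{L}$, one may equivariantly embed $\mathcal{X}\hookrightarrow\P^N\times\Delta$ so that $\mathcal{L}$ becomes the restriction of the relative $\mathcal{O}(1)$ (as recalled in the paper), and then take $\phi_0$ to be the restriction of the fiberwise Fubini-Study metric, possibly modified by adding a small multiple of a K\"ahler potential on $\Delta$ so that $dd^c\phi_0$ is a K\"ahler form on the regular locus of $M$. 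Since $\phi_1$ is locally bounded on $\partial M$, the difference $\phi_1-\phi_0$ is bounded on $\partial M$.

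For $(1)$, note that the defining family of competitors is stable under the $S^1$-action: if $\psi$ is a positively curved locally bounded competitor, so is $\rho(e^{i\theta})^*\psi$, because $\phi_1$ is $S^1$-invariant. Taking sup over $\theta$ and then the usc-regularization preserves the envelope, so $\phi$ is $S^1$-invariant. For $(2)$, the upper bound follows from the global maximum principle for $\omega_0$-psh functions where $\omega_0:=dd^c\phi_0$: any competitor $\psi$ satisfies $\psi-\phi_0\le\sup_{\partial M}(\phi_1-\phi_0)=:C_+<\infty$ throughout $M$; the lower bound follows from the observation that $\phi_0-C_-$ is itself a competitor for $C_-$ large enough, giving $\phi\ge\phi_0-C_-$. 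Hence $\phi$ is locally bounded and its usc-regularization $\phi^*$ is an $\omega_0$-psh metric competitor, so $\phi^*\le\phi$, forcing $\phi^*=\phi$, which yields simultaneously upper semi-continuity and positivity of the curvature current.

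For $(3)$, one constructs explicit barriers at each boundary fiber $X_{\tau_0}$, $|\tau_0|=1$. An upper barrier is obtained by adding to $\phi_0$ an $\omega_0$-psh function on $M$ with boundary values $\phi_1-\phi_0$, say by convolving along the $S^1$-direction and using a classical Demailly-type regularization; a matching lower barrier is built from a competitor obtained by maximizing with $\phi_0-C_-$. Since both barriers converge uniformly to $\phi_1$ as $|\tau|\to1$ (by the regularity of $\phi_1$ along the $S^1$-orbits together with $\phi_1$'s local boundedness in trivializations of $\mathcal{L}$ near the boundary), $\phi_\tau$ is sandwiched and converges uniformly to $\phi_1$.

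For $(4)$, the vanishing $(dd^c\phi)^{n+1}=0$ in the interior is a classical balayage argument: at any regular interior point $p$ take a small coordinate ball $B\Subset M_{\mathrm{reg}}$, solve the Bedford-Taylor Dirichlet problem on $B$ with boundary values $\phi|_{\partial B}$, producing a bounded psh $\tilde\phi\ge\phi$ on $B$ with $(dd^c\tilde\phi)^{n+1}=0$ in $B$; gluing $\tilde\phi$ on $B$ with $\phi$ off $B$ yields a new competitor, whence maximality forces $\tilde\phi=\phi$ on $B$. The main obstacle is the possible singularities of the total space $\mathcal{X}$, which prevent a direct appeal to Bedford-Taylor theory globally; this is bypassed either by performing the balayage on the regular locus of $M$ (which is dense and of full measure, and over which pluripotential theory applies without modification, while the non-pluripolar Monge-Amp\`ere mass puts no mass on the singular locus for locally bounded metrics) or by working on a log resolution $p\colon M'\to M$ and transferring the problem via $p^*\phi$, exploiting that $p^*$ preserves the class of locally bounded positively curved metrics.
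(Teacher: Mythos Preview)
Your overall strategy matches the paper's, and points (1) and (4) are handled essentially the same way. There are, however, two substantive issues.

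\textbf{The upper bound in (2).} Your appeal to a ``global maximum principle for $\omega_0$-psh functions'' is not valid here. On a compact manifold-with-boundary whose boundary is only in the base direction while the fibers are compact, $\omega_0$-psh functions can have strict interior maxima: on the product $X\times\bar\Delta$ with $\omega_0=\omega_X+\epsilon\, i\,dz\wedge d\bar z$, the function $u=-\epsilon'|z|^2$ (for $\epsilon'$ small) is $\omega_0$-psh but attains its maximum at $z=0$. So the inequality $\psi-\phi_0\le\sup_{\partial M}(\phi_1-\phi_0)$ does not follow. The paper avoids this by a genuinely different argument: one extends $u=\psi-\phi_{FS}$ to an $\omega$-psh function on a compactification $\hat{\mathcal{X}}$ and invokes the Guedj--Zeriahi result that an $\omega$-psh function bounded on a non-pluripolar set (here $\partial M$) is globally bounded; alternatively one dominates $u$ by a Dirichlet solution for a Laplacian on a resolution. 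Either route yields \emph{some} constant $C'$ with $\psi\le\phi_{FS}+C'$, which is all that is needed.

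\textbf{The boundary convergence (3).} Your barrier description is too vague to constitute a proof, and your lower competitor $\phi_0-C_-$ does not approach $\phi_1$ at the boundary, so it cannot serve as a lower barrier for the convergence. The paper's argument is different and quite clean: it first constructs the explicit competitor $\tilde\phi=\max\{\phi_1+\log|\tau|,\,\phi_{FS}-C\}$, which near the boundary equals $\phi_1+\log|\tau|$, giving the crucial lower bound $\phi\ge\phi_1+\log|\tau|$. Then, using $S^1$-invariance, the ray $\phi^t=\rho(\tau)^*\phi_\tau$ is convex in $t=-\log|\tau|$, so the right derivative $\dot\phi$ at $t=0$ exists; the upper bound from (2) gives $\dot\phi\le C_0$ and the lower bound just obtained gives $\dot\phi\ge -1$, whence $|\phi^t-\phi^0|\le Ct\to 0$. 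This convexity-in-$t$ step is the key idea you are missing for (3), and it also explains why the paper's specific barrier (matching $\phi_1+\log|\tau|$ near the boundary) is chosen rather than an arbitrary competitor bounded below.
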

\begin{proof}
The first point follows immediately from the extremal defining of
$\phi.$ It will be convenient to identify the metric $\phi_{1}$
with a $\C^{*}-$invariant metric on $\mathcal{L}$ over the punctured
unit-disc $\Delta^{*}$ using the action $\rho.$ We will also, abusing
notation slightly, identify the coordinate $\tau$ with the psh function
on $\pi^{*}\tau$ on $\mathcal{X}.$ Let us first construct a\emph{
barrier,} i.e. a continuous metric $\tilde{\phi}$ on \emph{$\mathcal{L}$}
with positive curvature current such that $\tilde{\phi}=\phi_{1}$
on $\partial M$ and $\tilde{\phi}_{\tau}\rightarrow\phi_{1}$ as
$|\tau|\rightarrow1.$ To this end first observe that for $\epsilon>0$
sufficiently small there exist a continuous metric $\phi_{U}$ with
positive curvature on \emph{$\mathcal{L}\rightarrow U$} over the
open set $U:=\{|\tau|\leq\epsilon\}\subset\mathcal{X}.$ Indeed, we
can set $\phi_{U}=\phi_{FS}$ for the Fubini-Study metric induced
by a fixed embedding of $\mathcal{X}$ (see the end of section \ref{sub:K-polystability-and-test}).
Finally, we set $\tilde{\phi}:=\max\{\phi_{1}+\log|\tau|,\phi_{U}-C\}$
for $C$ sufficiently large so that $\tilde{\phi}=\phi_{U}-C$ for
$|\tau|$ sufficiently small and $\tilde{\phi}=\phi_{1}+\log|\tau|$
for $|\tau|>\epsilon/2.$ Since $\tilde{\phi}$ is a candidate for
the sup defining $\phi$ we conclude that 
\begin{equation}
\phi\geq\tilde{\phi}\geq\phi_{1}+\log|\tau|\label{eq:lower bound on env}
\end{equation}
Next, let us show that $\phi$ is locally bounded from above or equivalently
that there exists a constant $C'$ such that 
\begin{equation}
\phi\leq\phi_{FS}+C'\label{eq:upper bound on env}
\end{equation}
Accepting this for the moment we deduce that the envelope $\phi$
is finite with positive curvature current. Moreover, the upper bound
also implies that the upper semi-continuous regularization $\phi^{*}$
of $\phi$ is a candidate for the sup defining $\phi,$ forcing $\phi=\phi^{*}$
in the interior of $M,$ i.e. $\phi$ is upper semi-continuous there.
To prove the previous upper bound we note that since any candidate
$\psi$ for the sup defining $\phi$ satisfies $\psi\leq\phi_{FS}+C$
on the set $E:=\partial M$ it follows from general compactness properties
of positively curved metrics (or more generally, $\omega-$psh functions)
that there is a constant $C'$ such that $\psi\leq\phi_{FS}+C'$ on
all of $M.$ Indeed, by a simple extension argument we may as well
assume that $u:=\psi-\phi_{FS}$ extends as an $\omega-$psh function
to some compactification $\hat{\mathcal{X}}$ of $\mathcal{X}$ for
some semi-positive form current $\omega$ with continuous potentials.
But since $u\leq C$ on the non-pluripolar set $E$ it then follows
from Cor 5.3 in \cite{g-z} that $u\leq C'$ on all of $\hat{\mathcal{X}}$
(strictly speaking the variety $\hat{\mathcal{X}}$ is assumed non-singular
in \cite{g-z}, but we may as well deduce the result by pulling back
$u$ to a smooth resolution of $\hat{\mathcal{X}}).$ Alternatively,
$u$ can be shown to be bounded from above by using the maximum principle
to bound it by a solution to a Dirichlet type problem for the Laplace
operator with respect to a fixed Kähler metric on a resolution of
$M$ (compare the argument for the upper bound in \cite{p-s1b}). 

Let us next consider the behavior of $\phi$ on $\Delta^{*}$ by identifying
$\phi_{\tau}$ with $\phi^{t}$ as above for $t\in[0,\infty[.$ Since
$\phi$ is positively curved and $S^{1}-$invariant it follows that
$\phi^{t}$ is convex in $t$ on $]0,\infty[$ and in particular the
right derivative $\dot{\phi}(t)$ with respect to $t$ exist and define
an increasing function on $]0,\infty[.$ Hence, $\dot{\phi}(t)\leq C_{1}:=\dot{\phi}(t_{1})$
as $t\rightarrow0.$ Combined with the lower bound \ref{eq:lower bound on env}
this means that there exists a constant $C_{T}$ such that $\left|\dot{\phi}\right|\leq C_{T}$
for any $t\in[0,T]$ and thus $|\phi^{t}-\phi^{t'}|\leq C_{T}|t-t'|$
for $t$ and $t'$ sufficiently small positive numbers. But then it
follows that $\phi^{t}$ converges uniformly to the correct boundary
values as $t\rightarrow0$ (again using the lower bound \ref{eq:lower bound on env}).

As for the final point, the vanishing of the Monge-Ampère measure
$(dd^{c}\phi)^{n+1}$ on the regular part of the interior of $M$
is a standard local argument, which follows from comparison with the
solution of the homogenuous Monge-Ampère equation on small balls.
Since, the Monge-Ampère measure on a locally bounded metric does not
charge pluripolar sets and in particular not the singular locus of
$M$ this concludes the proof.
\end{proof}
According to the previous proposition the envelope $\phi$ thus induces
a \emph{weak geodesic ray} $\phi^{t}$ (formula \ref{eq:def of geodesic ray as pull-back})
in the space $\mathcal{H}_{b}(X,L)$ of all bounded positively curved
metrics, starting at a given metric (compare \cite{p-s}). For much
more precise regularity results (given suitably smooth data on $\partial M)$
expressed on a smooth resolution of $\mathcal{X}$ we refer to the
paper \cite{p-s1b} and to \cite{ch}. However, the point here is
that the modest regularity results above will be adequate for our
purposes and that are valid for any given locally bounded positively
curved metric $\phi_{1}.$
\begin{example}
\label{ex:product ma eq and vector field}Given $(X,L)$ and a metric
$\phi_{L}$ in $\mathcal{H}_{b}(L)$ we set $(\mathcal{X},\mathcal{L},\pi)=(X\times\C,p_{1}^{*}L,p_{2})$
and equip $\mathcal{L}$ with the metric $\phi:=p_{1}^{*}\phi_{L}\in\mathcal{H}_{b}(\mathcal{L}).$
Then $\phi$ is the unique solution to the Dirichlet problem for the
complex Monge-Ampère equation on $\pi^{-1}(\Delta)$ with boundary
data $p_{1}^{*}\phi_{L}.$ This example fits into the setting above
if one equips $(\mathcal{X},\mathcal{L})$ with the ``trivial''
action $\rho_{triv}$ covering the standard $\C^{*}-$action on $\C$
and then the ray $\phi_{t},$ as defined by formula \ref{eq:def of geodesic ray as pull-back},
is constant in $t:$ $\phi^{t}=\phi_{L},$ since $\rho_{triv}$ preserves
$\phi.$ On the other hand, if we are given a non-trivial holomorphic
vector field $V$ generating a $\C^{*}-$action on $(X,L),$ such
that the corresponding $S^{1}-$action preserves $\phi_{L},$ then
we can endow $(\mathcal{X},\mathcal{L})$ with the corresponding non-trivial
action $\rho$ (in the sense of Example \ref{ex:product test}), which
still preserves the boundary data. The corresponding ray $\phi_{t}$
determined by $\phi$ and $\rho$ is then given by $\phi_{t}=(e^{tV})^{*}\phi_{L}.$
Note that $\rho_{triv}=\rho\circ\rho_{(X,-V)}$ in the notation of
Example \ref{ex:product test} and hence $\phi$ is invariant under
$\rho\circ\rho_{(X,-V)}$ in the sense that $(\rho\circ\rho_{(X,-V)})^{*}\phi=\phi.$ 
\end{example}

\section{\label{sec:Singularity-structure-of}Proofs of the main results}

Recall that the Ding functional introduced in \cite{di}, in the setting
of smooth Fano manifolds $X,$ is the functional on the space of all
smooth positively curved metrics on $-K_{X}$ defined, in our notation,
by
\begin{equation}
\mathcal{D}(\phi):=-\frac{1}{(-K_{X})^{n}}\mathcal{E}(\phi)+\log\int_{X}e^{-\phi}.\label{eq:def of ding functional}
\end{equation}
It follows immediately from the variational property \ref{eq:variational prop of energy}
of $\mathcal{E}$ that the critical points of $\mathcal{D}$ are Kähler-Einstein
metrics. More generally, the functional $\mathcal{D}(\phi)$ is well-defined
and finite on the space $\mathcal{H}_{b}(-K_{X})$ of bounded metrics
on $-K_{X}$ as long as $X$ has log terminal singularities (see Lemma
\ref{lem:singular complex sing is lct} for the finiteness of the
integral piece). Moreover, for any curve $\phi_{t}$ in $\mathcal{H}_{b}(-K_{X})$
such that $\phi_{0}$ is a (singular) Kähler-Einstein metric and the
right derivative $\frac{d\phi_{t}}{dt}_{|t=0^{+}}$exists we have
\begin{equation}
\frac{d\mathcal{D}(\phi_{t})}{dt}_{|t=0^{+}}\geq0\label{eq:ke is almost critical pt}
\end{equation}
as follows from the affine concavity of $\mathcal{E}$ and the Kähler-Einstein
equation (see \cite[Formula 6.5]{bbgz}).

\subsubsection{\label{sub:Sketch-of-the}Sketch of the proofs of Theorem \ref{thm:df=00003Dding}
and Theorem \ref{thm:k-poly intro} in the $\Q-$Gorenstein case}

First note that in the case when the total space $\mathcal{X}$ of
the test configuration is $\Q-$Gorenstein and $\mathcal{L}=-K_{\mathcal{X}/\C}$
(as is, for example, the case for a special test configuration) the
line bundle $\eta\rightarrow\C,$ whose weight $w_{0}(\eta)$ over
zero is equal to $DF(\mathcal{X},\mathcal{L})$ (Section \ref{sub:Deligne-pairings-and}),
is simply given by the top Deligne pairing $-\frac{1}{(-K_{X})^{n}(n+1)}\left\langle -K_{\mathcal{X}/\C},...,-K_{\mathcal{X}/\C}\right\rangle .$
\footnote{When making the identifications between $\mathcal{L}=-K_{\mathcal{X}/\C}$
we have to take the lift $\rho$ of the $\C^{*}-$action on $\mathcal{X}$
to be the canonical lift to $-K_{\mathcal{X}/\C}$ (compare the beginning
of Section \ref{sub:Special-test-configurations}). %
}Given a locally bounded metric $\phi$ on $\mathcal{L}\rightarrow\C$
we define the corresponding \emph{Ding metric} $\Phi$ on $\eta$
as the induced Deligne metric $\left\langle \phi\right\rangle $ plus
the function 
\begin{equation}
v_{\phi}(\tau):=-\log\int_{X_{\tau}}e^{-\phi_{\tau}}\label{eq:def of v phi}
\end{equation}
 (using, as before, additive notation for metrics) on $\C^{*},$ which,
as will be explained in the next section extends to a unique subharmonic
function on $\C.$ By the change of metrics formula \ref{eq:change of metric formula as energy}
the Ding metric $\Phi(\tau)$ may, as the name suggest, be identified
with the Ding functional $\mathcal{D}(\phi^{t})$ (formula \ref{eq:def of ding functional})
along the corresponding ray of metric $\phi^{t}$ on $-K_{X}.$ More
generally, when $\mathcal{X}$ is $\Q-$Gorenstein and $\mathcal{L}$
is a general polarization for $\mathcal{X}\rightarrow\C$ the idea
is to replace $\eta$ with another line bundle $\delta,$ that we
shall call the\emph{ Ding line bundle,} defined by 
\[
\delta:=-\frac{1}{L^{n}(n+1)}\left\langle \mathcal{L},...,\mathcal{L}\right\rangle +\pi{}_{*}(\mathcal{L}+K_{\mathcal{X}/\C})\rightarrow\C,
\]
 where $\pi{}_{*}(\mathcal{L}+K_{\mathcal{X}/\C})$ is the corresponding
adjoint direct image sheaf. In this more general case we define the
corresponding \emph{Ding metric} $\Phi$ as the Deligne metric $\left\langle \phi\right\rangle $
plus the natural $L^{2}-$type metric on $\pi{}_{*}(\mathcal{L}+K_{\mathcal{X}/\C})$
induced by $\phi.$ Even if $\delta$ is, in general not isomorphic
to $\eta$ we will show that 
\[
\left(DF(\mathcal{X},\mathcal{L})=\right)w_{0}(\eta)=w_{0}(\delta)+p,\,\,\,\, p\geq0
\]
 where $p=0$ iff $\mathcal{L}$ is isomorphic to $-K_{\mathcal{X}/\C}.$
Applying the Kempf-Ness type Lemma \ref{lem:Hilbert-Mumford} to $w_{0}(\delta)$
then gives 
\begin{equation}
DF(\mathcal{X},\mathcal{L})=\lim_{t\rightarrow\infty}\frac{d}{dt}\mathcal{D}(\phi^{t})+q,\,\,\,\, q=p+l_{0},\label{eq:formula for df in sketch}
\end{equation}
where $l_{0}$ is the Lelong number $l_{0}$ at zero of the Ding metric,
which in turn is shown to coincides with the non-negative Lelong number
of the $L^{2}-$type metric on $\pi{}_{*}(\mathcal{L}+K_{\mathcal{X}/\C}).$
As a consequence the error term $q$ is non-negative. Finally, taking
$\phi^{t}$ to be the geodesic ray associated to $(\mathcal{X},\mathcal{L})$
emanating from a given Kähler-Einstein metric and using convexity
properties of $\mathcal{D}(\phi^{t})$ then proves Theorem \ref{thm:k-poly intro}.
In the case of a general test configuration $(\mathcal{X},\mathcal{L})$
the idea is to apply the previous argument to a log resolution of
$\mathcal{X}.$ 
\begin{rem}
\label{Rem:li-x}It should be pointed out that, by a recent result
of Li-Xu \cite{l-x} which uses recent advances in MMP, a Fano variety
$X$ is K-polystable iff it is K-polystable for all \emph{special}
test configurations and hence, if one invokes \cite{l-x}, it is enough
to consider special test configurations in the proof of Theorem \ref{thm:k-poly intro}.
Anyway, as indicated above the proof in the general case is not much
more involved and combined with the existence results in \cite{c-d-s}
it yields an analytical/differential geometric proof of the result
of Li-Xu referred to above, at least for $X$ smooth (since it is
shown in \cite{c-d-s} that if a Fano manifold $X$ is K-polystable
for all special test configuration, then $X$ admits a Kähler-Einstein
metrics). We recall that an important ingredient in the proof in \cite{l-x}
is to show that $DF(\mathcal{X},\mathcal{L})$ decreases along a relative
MMP (first without and then with scaling) which, after an initial
base change, modifies $(\mathcal{X}_{0},\mathcal{L}_{|\mathcal{X}_{0}})$
so that $(i)$ $(\mathcal{X},\mathcal{X}_{0})$ has log canonical
singularities and $(ii)$ $\mathcal{L}$ is isomorphic to $-K_{\mathcal{X}/\C}.$
Interestingly, as we will show (independently of \cite{l-x}) that
$(i)$ holds iff $l_{0}=0$ and $(ii)$ holds iff $p=0$ and thus
the error term $q$ in formula \ref{eq:formula for df in sketch}
achieves it minimal value $0$ iff $(\mathcal{X},\mathcal{L})$ is
a test configuration of the form produced by the MMP procedure in
\cite{l-x}.
\end{rem}

\subsection{\label{sub:The-Ding-line}The Ding line bundle and the Ding metric}

Let $(\mathcal{X},\mathcal{L})$ be a test configuration for a Fano
variety $(X,-K_{X})$ and fix an equivariant log resolution $p:\,\mathcal{X}'\rightarrow\mathcal{X}$
of $(\mathcal{X},\mathcal{X}_{0})$ and write $\mathcal{L}':=p^{*}\mathcal{L}.$
Then $(\mathcal{X}',\mathcal{L}'$) is a semi-test configuration for
$(X,-K_{X}).$ First assume, to fix ideas, that the original Fano
variety $X$ is smooth with $\mathcal{L}$ a line bundle over $\mathcal{X}$
and define a the \emph{Ding line bundle }$\delta'\rightarrow\C$ by
\begin{equation}
\delta':=-\frac{1}{L^{n}(n+1)}\left\langle \mathcal{L}',...,\mathcal{L}'\right\rangle +\pi'_{*}(\mathcal{L}'+K_{\mathcal{X}'/\C})\rightarrow\C,\label{eq:ding line bundle}
\end{equation}
(when $X$ is smooth the direct image sheaf $\pi'_{*}(\mathcal{L}'+K_{\mathcal{X}'/\C})$
is indeed a line bundle, as explained below). Given a metric $\phi$
on $\mathcal{L}\rightarrow\mathcal{X}$ we denote by $\Phi'$ the\emph{
generalized Ding metric} on $\delta',$ defined as the Deligne metric
on the top Deligne pairing of $\mathcal{L}$ twisted by the $L^{2}-$metric
on $\pi'_{*}(\mathcal{L}'+K_{\mathcal{X}'/\C}),$ induced by $\phi':=p^{*}\phi.$
Note that in general $\mathcal{L}$ is only assumed to be a $\Q-$line
bundle, i.e. $r\mathcal{L}$ is a line bundle for some positive integer
$r$ and then we may simply define $\pi'_{*}(\mathcal{L}'+K_{\mathcal{X}'/\C}):=\pi'_{*}(r(\mathcal{L}'+K_{\mathcal{X}'/\C})/r$
as a $\Q-$line bundle (which is easily seen to be independent of
$r)$ and let $\Phi'$ be the metric defined by the corresponding
$L^{2/r}-$norm 
\[
\left\Vert s_{\tau}\right\Vert _{L_{\phi}^{2/r}}:=\left(\int_{\mathcal{X}_{\tau}}|s_{\tau}|^{2/r}e^{-\phi'_{\tau}}\right)^{r/2},
\]
 where we have identified the restriction $s_{\tau}$ of $s\in H^{0}(X',\mathcal{L}'+K_{\mathcal{X}'/\C})$
with a holomorphic $(n,0)-$form on $X_{\tau}$ with values in $\mathcal{L}'_{|X_{\tau}}$
(compare the notation in Section \ref{sub:canonical measures}).

Turning to the case of a general Fano variety $X$ with log terminal
singularities first recall that, since the variety $\mathcal{X}^{*}(:=\mathcal{X}-\mathcal{X}_{0})$
has log terminal singularities, we have $p^{*}K_{\mathcal{X}}=K_{\mathcal{X}'}+D^{*}$
on $\mathcal{X}'-\mathcal{X}'_{0}$ for a (sub) klt $\Q-$divisor
$D^{*},$ whose closure in $\mathcal{X}'$ we will denote by $D.$
We can decompose $D=D'-E'$ as a difference of effective $\Q-$divisors
where $E'$ has integral coefficients (but we are not claiming that
the $D'$ and $E'$ have no common components) . We may and will assume
that the log resolution is such that the support of $D$ has simple
normal crossings and is transversal to $\mathcal{X}'_{0}.$ We then
define 
\[
\delta':=-\frac{1}{L^{n}(n+1)}\left\langle \mathcal{L}',...,\mathcal{L}'\right\rangle +\pi'_{*}(\mathcal{L}'+D'+K_{\mathcal{X}'/\C})\rightarrow\C,
\]
and denote by $\Phi'$ the corresponding metric on $\delta',$ which
is defined using the log adjoint $L^{2/r}-$metric on $\pi'_{*}(\mathcal{L}'+D'+K_{\mathcal{X}'/\C})$
 
\[
\left\Vert s_{\tau}\right\Vert _{L_{\phi}^{2/r}}:=\left(\int_{\mathcal{X}_{\tau}}|s_{\tau}'|^{2/r}e^{-(\phi'_{\tau}+\phi_{D'})}\right)^{r/2}
\]
To see that $\pi'_{*}(\mathcal{L}'+D'+K_{\mathcal{X}'/\C})$ is indeed
a line bundle over $\C$ first note that over $\C^{*},$ where the
sheaf is globally free, any fiber may be identified with $H^{0}(X',E'),$
where $X'=p^{*}X$ and $E'$ is $p-$exceptional, so that $\dim H^{0}(X',E')=1.$
The extension property to all of $\C$ then follows from general principles.
Indeed, the direct image sheaf is clearly torsion-free and since the
base is a curve any torsion-free sheaf is automatically locally free
(indeed, to get a local generator for the sheaf close to $0\in\C$
one simply takes an element with minimal vanishing order at $0).$

\subsection{Positivity/continuity properties of the Ding metric}

In this section we assume given a Fano variety $X$ (with log terminal
singularities) and a locally bounded metric $\phi_{1}$ on $-K_{X}$
with positive curvature current. Fixing a test configuration $(\mathcal{X},\mathcal{L})$
for $(X,-K_{X})$ and a resolution $(\mathcal{X}',\mathcal{L}'),$
as in the previous section, we denote by $\phi$ the induced $S^{1}-$invariant
locally bounded metric on $\mathcal{L}\rightarrow M(\subset\mathcal{X})$
(compare Section \ref{sub:The-Monge-Amp=0000E8re-equation}), by $\phi^{t}$
the corresponding weak geodesic in $\mathcal{H}_{b}(-K_{X})$ and
by $\Phi$ the corresponding Ding type metric on the Ding line bundle
$\delta'\rightarrow\Delta.$ 

The study of the positivity properties of the Ding metric relies on
the following fundamental positivity result of Berndtsson-Paun for
direct image vector bundles (applied the rank one case):
\begin{lem}
\label{lem:(Berndtsson-Paun)-Let-}Let $\mathcal{X}$ be a non-singular
projective variety with a morphism $\pi:\,\mathcal{X}\rightarrow\C$
which is smooth (i.e. a submersion) over $\C^{*}$ and a $\Q-$line
bundle $\mathcal{L}\rightarrow\mathcal{X}$ equipped with with a singular
metric $\phi$ with positive curvature and such that the restriction
$\phi_{\tau}$ to each fiber $X_{\tau},$ for $\tau\in\C^{*},$ satisfies
$e^{-\phi_{\tau}}\in L_{loc}^{1}.$ If $\pi_{*}(\mathcal{L}+K_{\mathcal{X}/\C})\rightarrow\C$
is defined as a $\Q-$line bundle (i.e. if $\dim H^{0}(\mathcal{L}_{|X_{\tau}}+K_{\mathcal{X}_{\tau}})=1$
for $\tau\in\C^{*})$ then the corresponding $L^{2/r}-$metric on
the line bundle $\pi_{*}(\mathcal{L}+K_{\mathcal{X}/\C})\rightarrow\C$
has positive curvature in the sense of currents (where $r$ is a positive
integer such that $r\mathcal{L}$ is a line bundle). \end{lem}
\begin{proof}
The positivity over $\C^{*}$ is a special case of the main results
in \cite{bern1,be-p} (note that by assumption the $L^{2/r}-$metric
is finite over $\C^{*}).$ The positivity over a neighborhood of $0$
also follows from the arguments in \cite{be-p}. But as the latter
positivity was not stated explicitly in \cite{be-p} we provide a
detailed proof%
\footnote{The positivity in question is also a special case of the very general
positivity results in \cite[Theorem 1.1]{p-tak}, which appeared during
the revision of the present paper, whose proof uses among other things,
semi-stable reduction.%
}. Fix a local trivializing section of $\pi_{*}(r(\mathcal{L}+K_{\mathcal{X}/\C}))\rightarrow\C$
over a small neighborhood $V$ of $0\in\C.$ It may be identified
with a global holomorphic section $s$ of $r(\mathcal{L}+K_{\mathcal{X}/\C})\rightarrow\mathcal{X}_{|V}$
with the property that $\tau$ does not divide $s.$ Fix a local coordinate
$\tau$ on $\C$ and let 
\begin{equation}
v(\tau):=-\frac{1}{r}\log\left\Vert s\right\Vert _{L_{\phi}^{2/r}}^{2}(=-\log\left\Vert s\right\Vert _{L_{\phi}^{2/r}}^{2/r})\label{eq:definition of weight v}
\end{equation}
 be the corresponding local weight of the $L^{2/r}-$metric on the
$\Q-$line bundle $\frac{1}{r}\pi_{*}(r(\mathcal{L}+K_{\mathcal{X}/\C})).$
By basic properties of subharmonic functions the positivity in question
is equivalent to an upper bound on $v(\tau)$ on $V$ or equivalently
a lower bound 
\begin{equation}
\left\Vert s\right\Vert _{L_{\phi}^{2/r}}^{2/r}:=\int_{\mathcal{X}_{\tau}}|s_{\tau}|^{2/r}e^{-\phi_{\tau}}\geq\epsilon>0,\,\,\,\tau\in V\label{eq:lower bound in pos}
\end{equation}
where we have identified $|s|^{2/r}e^{-\phi}$ with a family of measures
over $\mathcal{X}^{*}:=\pi^{-1}(\C^{*})$ (as in Section \ref{sub:canonical measures}).
A subtle point is that the assumptions of the lemma do not exclude
that the holomorphic section $s$ vanishes identically on the reduction
of $\mathcal{X}_{0}$ (i.e. on the underlying variety); for example,
this can happen if $\mathcal{X}_{0}$ has components with different
multiplicities. On the other hand it follows from a local application
of the generalized Ohsawa-Takegoshi extension theorem in \cite[Lemma 1.1]{be-p}
that there exists a uniform constant $C$ (i.e. independent of $\tau)$
such that 
\[
\int_{V}|s|^{2/r}e^{-\phi}\leq C\int_{\mathcal{X}_{\tau}}|s_{\tau}|^{2/r}e^{-\phi_{\tau}},\,\,\,\tau\in V
\]
But since $s$ is non-vanishing over $V-\{0\}$ this implies the desired
lower bound \ref{eq:lower bound in pos}.
\end{proof}
In particular, by the previous proposition the function $v_{\phi}(\tau)$
is subharmonic, i.e $v_{\phi}(e^{-t/2})$ is convex, as long as $\phi$
is psh (as already observed in \cite{bbgz,bern2} for $X$ non-singular
and in \cite{bbegz} in general). Next, we will show that if the curvature
of the line bundle $\pi_{*}(\mathcal{L}+K_{\mathcal{X}/\C})$ vanishes
identically, then - in the presence of a $\C^{*}-$action as in the
definition of a test configuration - $\mathcal{X}$ has to be a product.
This will be crucial when considering the case $DF(\mathcal{X},\mathcal{L})=0$
in the proof of Theorem \ref{thm:k-poly intro}. 
\begin{prop}
\label{prop:flat vector bundle}Let $X$ be a Fano variety with log
terminal singularities and $(\mathcal{X},\mathcal{L})$ a test configuration
for $(X,-K_{X})$ such that $\mathcal{X}$ is $\Q-$Gorenstein and
$\mathcal{L}=-K_{\mathcal{X}/\C}.$ Assume that $\mathcal{L}$ is
equipped with an $S^{1}-$invariant locally bounded metric $\phi$
with positive curvature current such that the induced curvature current
of the direct image sheaf $\pi_{*}(\mathcal{L}+K_{\mathcal{X}/\C})$
vanishes identically on $\C$ (or more generally, over some neighbourhood
of $0\in\C).$ Then $\mathcal{X}$ is isomorphic to $X\times\C.$ \end{prop}
\begin{proof}
First recall that it was shown in \cite[Theorem 6.1]{bern2} in the
case of $X$ smooth and \cite[Theorem 5.1]{bbegz} in the general
case, that if $v_{\phi}(\tau)$ is harmonic, for $\tau\in\Delta^{*}$
(i.e. $v_{\phi}(e^{-t})$ is affine in $t$), then there is a family
of biholomorphic maps $F^{t}$ indexed by $t\in\R$ such that 
\begin{equation}
(F^{t})^{*}dd^{c}\phi^{t}=dd^{c}\phi^{0},\label{eq:pull back relation on kahler metrics}
\end{equation}
 where $\phi^{t}$ denotes the ray of metric on $-K_{X}$ corresponding
to $\phi,$ i.e. $\phi^{t}=\rho(\tau)^{*}\phi_{\tau},$ for $t=-\log|\tau|^{2}$
(compare Section \ref{sub:The-Monge-Amp=0000E8re-equation}). Moreover,
as shown in \cite[Section 4.1]{bern2} $F^{t}=\mbox{exp \ensuremath{(t\mbox{Re}V)}}$for
a holomorphic $(1,0)-$vector field $V$ on $X$ such that the flow
of its imaginary part $\mbox{Im}V$ preserves $dd^{c}\phi^{0}.$ In
fact, in the present setting we even have 
\begin{equation}
(F^{t})^{*}\phi^{t}=\phi^{0},\label{eq:pull-back relation on ray of metrics}
\end{equation}
where we have used the same notation $F^{t}$ for the canonical lift
of $F^{t}$ to $-K_{X}.$ To see this first note that the relation
\ref{eq:pull back relation on kahler metrics} implies that  $(F^{t})^{*}\phi^{t}=\phi^{0}+a(t)$
for some function $a(t)$ and since $v_{\phi}(e^{-t})=-\log\int_{X}e^{-\phi_{0}}+a(t)$
is assumed affine it follows that $a(t)$ is also affine, i.e. $a(t)=at+c$
for some real numbers $a$ and $c.$ But then it follows that $v_{\phi}(\tau)=-\log\int_{X}e^{-\phi_{0}}+a\log|\tau|^{2}+c$
and hence $a$ is equal to the Lelong number $l_{0}$ of $v_{\phi}(\tau)$
at $0.$ Now, $l_{0}$ coincides with the mass at $0$ of the curvature
current on $\pi_{*}(\mathcal{L}+K_{\mathcal{X}/\C})$ (see formula
\ref{eq:lelong as mass}) which is assumed to vanish and hence $a=0$
and since $F^{0}$ is the identity this means that $c$ also vanishes,
which proves \ref{eq:pull-back relation on ray of metrics}. Next,
we make the following 
\begin{equation}
\mbox{Claim:\,}\mbox{\ensuremath{V\,\,\mbox{generates a \ensuremath{\C^{*}-}action \ensuremath{\rho_{(X,V)}}on \ensuremath{X}}}}\label{eq:claim periodic}
\end{equation}
(i.e. the flow of $\mbox{Im\ensuremath{V}}$ has period $2\pi).$
This is obvious if $X$ admits no non-trivial holomorphic vector fields
(since $V=0$ then) and in the general case the claim follows from
Lemma \ref{lem:vector field and c star} below, by observing that
$\mathcal{X}_{0}$ is reduced. Indeed, as observed above, the Lelong
number $l_{0}=0$ and hence it follows from Proposition \ref{prop:lelong of direct image}
below that $\mathcal{X}_{0}$ is indeed reduced.%
\footnote{In fact, one does not have to use the fact that $\mathcal{X}_{0}$
is reduced if one instead performs a base change followed by a normalization
to get a new normal test configuration $\mathcal{X}'$ with reduced
central fiber to which Lemma \ref{lem:vector field and c star} can
be applied; this corresponds to replacing $V$ with $mV$ for some
positive integer $m.$ %
} We will write $F_{\tau}:=\rho_{(X,V)}(\tau)$ for the family of biholomorphic
maps on $X,$ indexed by $\tau\in\C^{*},$ generated by $V$ (so that
$F_{\tau}=F^{t}$ for $\tau=e^{-t/2}$). Next, we set 
\[
G_{\tau}:=\rho_{\tau}\circ F_{\tau}^{-1};\,\,\,\,\mathcal{X}_{0}\rightarrow\mathcal{X}_{\tau}
\]
so that 
\begin{equation}
G_{\tau}^{*}\phi_{\tau}=\phi_{1}\label{eq:pull-back relation for metrics phi for tau fix}
\end{equation}
Using an equivariant embedding into $\P^{N}$ (see section \ref{sub:K-polystability-and-test})
we can identify $G_{\tau}$ with a family of holomorphic embeddings
\[
G_{\tau}:\, X\rightarrow\P^{N},\,\,\,\, X_{\tau}:=G_{\tau}(X),\,\,\, G_{\tau}^{*}\mathcal{O}(1)=-K_{X},\,\,\,\tau\in\C^{*}
\]
Since $\phi$ is a locally bounded metric on $\mathcal{L}\rightarrow\mathcal{X}$
we have that $|\phi-p_{1}^{*}\phi_{FS}|\leq C$ over $\Delta,$ where
$\phi_{FS}$ denotes the Fubini-Study metric on $\mathcal{O}(1)\rightarrow\P^{N}$
and $p_{1}$ the natural projection $\P^{N}\times\Delta\rightarrow\P^{N}.$
Hence, \ref{eq:pull-back relation for metrics phi for tau fix} gives
\begin{equation}
\sup_{X}|G_{\tau}^{*}\phi_{FS}-\phi_{1}|\leq C\label{eq:uniform bound on map}
\end{equation}
 for $\tau\in\Delta^{*}.$ We claim that the corresponding holomorphic
map $G$ from $X\times\Delta^{*}$ to $\P^{N}\times\Delta$ extends
to a holomorphic map $X\times\Delta\rightarrow\P^{N}\times\Delta.$
To see this introduce local coordinates on an open set $U\subset X$
centered at a given point $x_{0}$ in $X$ and fix an affine piece
$\C^{N}$ of $\P^{N}$ such that $G_{1}(U)\subset\C^{N}.$ Then there
exists a bounded subset $B$ of $\C^{N+1}$such that $G_{\tau}(U)\subset B$
for any $\tau\in\C^{*}.$ Indeed, by the very definition of the Fubini-Study
metric $\phi_{FS}$ the bound \ref{eq:uniform bound on map} gives
that there exists a constant $C'$ such that $|G_{\tau}(z)|\leq C'$
for any $z\in U$ and $\tau\in\C^{*}$ and hence $B$ can be taken
as as ball of radius $C'$. Hence, applying Harthog's extension theorem
(coordinatewise) thus gives that $G_{|U\times\Delta^{*}}$ extends
to a unique holomorphic map of $U\times\Delta$ into $\C^{N}\times\Delta$
and since the point $x_{0}$ was arbitrary this proves the claim.
Moreover, since $\mathcal{X}$ is normal and in particular closed
and irreducible, it follows that $G$ maps $X\times\Delta$ surjectively
onto $\mathcal{X}\subset\P^{N}\times\Delta.$ We claim that $G$ is
a finite map. Since $G$ is, by construction, injective on $X\times\Delta^{*}$
it will be enough to prove that the restriction of $G$ to $X\times\{0\},$
that we denote by $G_{0},$ defines a finite map from $X$ to $\P^{N}.$
To this end we note that that $G_{0}$ pulls back the cohomology class
$c_{1}(\mathcal{O}(1))$ on $\P^{N}$ to $c_{1}(L)(=c_{1}(-K_{X})).$
Indeed, by construction 
\begin{equation}
G_{\tau}^{*}c_{1}(\mathcal{O}(1))=c_{1}(L)\label{eq:pullback relation on cohom}
\end{equation}
 for any $\tau\in\C^{*}$ and since $G_{\tau}\rightarrow G_{0}$ as
$\tau\rightarrow0$ (in the sense established above) it follows that
\ref{eq:pullback relation on cohom} also holds for $\tau=0$ and
hence $G_{0}$ pulls back a Kähler class on $\P^{n}$ to a Kähler
class on $X.$ But then $G_{0}$ has to be finite, since otherwise
$G_{0}$ would contract some $p$- dimensional subvariety $V_{p}$
of $X.$ This would mean that the $p$ th intersection number of $c_{1}(L)$
with $V_{p}$ vanishes, contradicting the fact that $c_{1}(L)$ is
a Kähler class. All in all this means that $G$ defines a finite birational
morphism from $X\times\C$ to $\mathcal{X}$ and since $\mathcal{X}$
is assumed normal it then follows from Zariski's Main Theorem that
$G$ is a biholomorphism, as desired. The same argument also applies
if $\tau=1$ is replaced with any $\tau_{0}\neq0$ such that $v_{\phi}(\tau)$
is harmonic for $|\tau|\leq|\tau_{0}|.$ 
\end{proof}
The previous proposition can be seen as a partial generalization to
singular fibrations of a result in \cite{bern1b} (see Theorem 1.2
and the discussion in Section 4.1 in \cite{bern1b} where the fibration
is assumed to be a submersion, but without any assumptions on $\C^{*}-$equivariance).
In the proof we used the following lemma of independent interest:
\begin{lem}
\label{lem:vector field and c star}Assume given a non-trivial holomorphic
vector field $V$ on a normal variety $X$ of type $(1,0)$ with a
fixed lift to $L\rightarrow X$ and a metric $\phi^{0}\in\mathcal{H}_{b}(L),$
which is invariant under the flow $\mbox{Im}V.$ Then $V$ generates
a $\C^{*}-$action on $X$ iff there exists a (normal) test configuration
$(\mathcal{X},\mathcal{L},\pi,\rho)$ with reduced central fiber $\mathcal{X}_{0}$
such that $(e^{t\mbox{Re}V})^{*}\phi^{0}=\rho(\tau)^{*}\phi_{\tau}$,
where $\phi_{\tau}=\phi_{|\mathcal{X}_{\tau}}$ for some $\phi\in\mathcal{H}_{b}(\pi^{-1}(\Delta),\mathcal{L}).$ \end{lem}
\begin{proof}
If $V$ generates a $\C^{*}-$action then the test configuration can
be taken as a product, as explained in Example \ref{ex:product ma eq and vector field}.
But in order to prove the converse, which is what was used in the
proof of Proposition \ref{prop:flat vector bundle}, we will have
to show that the total space $\mathcal{X}$ of the given test configuration
is necessarily a product. We will continue with the notation from
the Proposition \ref{prop:flat vector bundle}. First observe that
the map $G_{\tau}$ above is well-defined for $\tau=e^{-t/2}\in[0,1[$
and set $G^{t}:=G_{e^{-t}}.$ By the argument above the map $G_{0}:=\lim_{t_{j}\rightarrow\infty}G^{t}$
still exists and defines a holomorphic map from $X$ to $\mathcal{X}\subset\P^{N},$
if one uses ``normal families'' instead of Harthog's extension theorem,
for some subsequence $t_{j}\rightarrow\infty$ (but we are not claiming
that $G_{0}$ is independent of the subsequence at this point). 

\emph{Step 1:} $\mathcal{X}_{0}$ is reduced and irreducible, i.e.
defined by a variety $X_{0}$ and $G_{0}$ is finite and generically
one-to-one, mapping $X_{1}$ onto $X_{0}$ and 
\begin{equation}
G_{0}^{*}\phi_{0}=\phi_{1}\label{eq:pull-back relation of metrics phi in the limit}
\end{equation}
where $\phi_{1}$ is restricted metric on $\mathcal{L}_{|X_{0}}.$ 

To prove the first point we decompose the central fiber $\mathcal{X}_{0},$
viewed as a divisor on the normal variety $\mathcal{X},$ in its irreduicble
components: $\mathcal{X}_{0}=\sum_{i=1}^{p}m_{i}E_{i},$ where $E_{i}$
are distinct prime divisors on $\mathcal{X}$ (i.e. reduced and irreducible).
Since $X$ is assumed irreducible it follows that $G_{0}$ maps $X_{1}$
onto one of the components of $\mathcal{X}_{0}$ that we may take
to be the one labeled by $i=1.$ By the definition of $G_{0}$ we
have the following convergence in the sense of currents on $\P^{N}$
\[
\lim_{t_{j}}[X_{t_{j}}]=(G_{0})_{*}[X]=d[E_{1}],
\]
 where $d$ is the degree of the surjective finite map $G_{0}:X\rightarrow E_{1}.$
But the lhs above is also equal to $[\mathcal{X}_{0}]$ (by basic
convergence properties of currents, or using the Chow variety) and
hence $d[E_{1}]=\sum_{i=1}^{p}m_{i}[E_{i}],$ which forces $d=m_{1}$
and $p=1.$ Now, $\mathcal{X}_{0}$ was assumed reduced and hence
$d=m_{1}=1,$ which implies (by basic properties of the degree) that
$G_{0}$ is a finite generally one-to-one map from $X_{1}$ onto the
irreducible variety $X_{0}(=E_{1}).$ Next, to prove \ref{eq:pull-back relation of metrics phi in the limit}
we fix a point $x_{0}\in X_{0}\cap\mathcal{X}_{reg}\cap(X_{0})_{reg}$
(i.e. $x_{0}\in X_{0}-Z$ where $Z$ has codimension one in $X_{0},$
since $\mathcal{X}$ is normal). Then there exists a neighbourhood
$U$ of $x_{0}$ in $\mathcal{X}$ with holomorphic coordinates centered
at $x_{0}$ of the form $(z,\tau).$ The relation \ref{eq:pull-back relation of metrics phi in the limit}
then follows from \ref{eq:pull-back relation for metrics phi for tau fix}
and a simple continuity argument at $x_{0}.$ This means that the
two psh metrics $G_{0}^{*}\phi_{0}$ and $\phi_{1}$ on $L\rightarrow X$
coincide on the Zariski open subset $G_{0}^{-1}(X_{0}-Z)$ of $X$
and hence everywhere, by the local identity principle for psh functions. 

\emph{Step 2: }The ``pull-back'' $\rho'$ to $X$ of the restricted
action of $\rho$ to $X_{0}$ coincides with the flow of $V$

Denote by $Y^{\nu}$ the normalization of an irreducible variety $Y$
and by $\nu$ the normalization map $\nu:Y^{\nu}\rightarrow Y,$ which
is a finite generically injective morphism. By the universal property
of the normalization $G_{0}$ lifts to $X_{0}^{\nu}$ and hence the
lifted map $G_{0}^{\nu}$ satisfies $G_{0}^{\nu*}\nu^{*}\phi_{0}=\phi_{1}.$
Moreover, since $X$ is normal it follows from Zariski's main theorem
that $G_{0}$ is an isomorphism and hence $X$ isomorphic to the normalization
of $X_{0}$ and $G_{0}$ may be identified with $\nu.$ Using the
universal property of the normalization again this means that the
the pull-back $G_{0}^{*}\rho$ which is a priori only well-defined
on a Zariski open subset of $X$ where $G_{0}$ is holomorphically
invertible, extends to give a well-defined holomorphic $\C^{*}-$action
$\rho'$ on $X.$ To prove that $\rho'$ is generated by $V$ we will
use a (singular) Hamiltonian formalism. To a given pair $(\psi,W)$
consisting of a locally bounded psh metric $\psi$ on a line bundle
$L\rightarrow Y$ over a complex variety $Y$ and a holomorphic vector
field $W$ on $Y$ with a fixed lift to $L,$ preserving $\psi,$
we associate a function $h_{(\psi,W)}$ on $Y,$ that we will call
the\emph{ Hamiltonian:} 
\[
h_{(\psi,W)}=\frac{d}{ds}_{|s=0}(e^{sW})^{*}\psi
\]
in the sense of right derivatives ($h_{(\psi,W)}$ exists and is finite
since $\psi$ is locally psh and hence $(e^{sW})^{*}\psi$ is convex
wrt $s).$ In the particular case when $W$ is the generator of a
$\C^{*}-$action $\rho$ we set $h_{(\psi,\rho)}:=h_{(\psi,W)}.$
We let $h,h_{0}$ and $h_{1}$ be the Hamiltonian functions on $\mathcal{X},X_{0}$
and $X_{1}$ corresponding to $(\rho,\phi),(\rho_{|X_{0}},\phi_{0})$
and $(V,\phi_{1}),$ respectively. Note that it follows directly from
the definition that $h_{|X_{0}}=h_{0}.$ Next we will show that

\begin{equation}
G_{0}^{*}h_{0}=h_{1}\label{eq:pull-back of hamilt}
\end{equation}
To this end first observe that
\begin{equation}
h_{1}(x)=h(G^{t}(x)).\label{eq:identifications of hamilt with t}
\end{equation}
 Indeed, under the isomorphism $X_{1}\times\C^{*}\rightarrow\mathcal{X}^{*},\,(x,\tau)\mapsto x_{\tau}:=\rho(\tau)x$
determined by $\rho$ the action $\rho$ on $\mathcal{X}^{*}$ may
be identified with the ``trivial'' action on $X_{1}\times\C^{*}$
generated by the vector field $\tau\frac{\partial}{\partial\tau}$
and the metric $\phi$ on $\mathcal{L}$ may be identified with the
metric on $p_{1}^{*}L$ suggestively written as $\phi(x,\tau):=\phi^{t}(x),$
where $t=-\log|\tau|^{2}.$ In the present setting we have, by assumption,
that $\phi_{t}=\exp(tV)^{*}\phi_{1}$ where $V$ also determines the
map $G^{t}$ from $X_{1}$ to $X_{e^{-t/2}}$ defined above, which
may be identified with the map $(x,1)\mapsto(\exp(-tV)x,e^{-t/2})\in X_{1}\times\C^{*}.$
Using these identifications we may write 
\[
h(G_{t}(x))=\frac{d}{ds}_{|s=0}\phi(\exp(-tV)x,e^{-(t+s)/2})=\frac{d}{ds}_{|s=0}\phi^{t+s}(\exp(-tV)x)=
\]
\[
=\frac{d}{ds}\phi_{1}(\exp(t+sV)\exp(-tV)x)=\frac{d}{ds}_{|s=0}\phi_{1}(\exp(sV)x)=:h_{1}(x),
\]
which proves \ref{eq:identifications of hamilt with t}. Finally,
setting $t=t_{i}$ and letting $t_{t}\rightarrow\infty$ gives, since
$G_{0}(x):=\lim_{t_{j}\rightarrow\infty}G^{t_{j}}(x)$ that $h_{1}(x)=h(G_{0}(x))$
and hence $h_{1}(x)=h_{0}(G_{0}(x)),$ proving \ref{eq:pull-back of hamilt}.
All in all, combining the pull-back relations \ref{eq:pull-back relation of metrics phi in the limit}
and \ref{eq:pull-back of hamilt} reveals that $h_{(\phi,V)}=h_{(\phi,G_{0}^{*}\rho}).$
But for a fixed metric $\phi\in\mathcal{H}_{b}(X)$ we have that $W\mapsto h_{(\phi,W)}$
is injective and hence $V$ is the generator of the $\C^{*}-$action
$G_{0}^{*}\rho$ which proves the claim \ref{eq:claim periodic}.
The injectivity used above is standard under the regularity assumption
that there exists a point $x\in X$ such that $dd^{c}\phi$ is smooth
and strictly positive close to $x,$ since $dd^{c}\phi(\mbox{Im}V,\cdot)=dh_{(\phi,V)},$
which may be inverted to determine $\mbox{Im \ensuremath{V}}$ and
hence $V.$ In the general case, the injectivity follows from the
general formalism in \cite{b-n} (or from Proposition 8.2 in \cite{bern2}).
Anyway, in the application to the proof of Theorem \ref{thm:k-poly intro}
$\phi_{1}$ will be a Kähler-Einstien metric and in particular the
regularity assumption above holds. \end{proof}
\begin{prop}
\label{prop:The-Ding-metric has pos prop}Let $(\mathcal{X},\mathcal{L})$
be a test configuration for a Fano variety $(X,-K_{X})$ with log
terminal singularities. Then the Ding metric associated to a weak
geodesic ray $\phi^{t}$ as above has the following positivity properties: 
\begin{itemize}
\item Its curvature defines a positive current on $\Delta$ (and in particular
the function $\mathcal{D}(\phi^{t})$ is convex in $t)$
\item If $\mathcal{X}$ is $\Q-$Gorenstein, \emph{$\mathcal{L}=-K_{\mathcal{X}/\C}$}
and the curvature current of the Ding metric vanishes on some disc
centered at $0$ (i.e. $\mathcal{D}(\phi^{t})$ is affine on $]T,\infty[$
and the Lelong number $l_{0}$ of the Ding metric vanishes) then $\mathcal{X}$
is a product test configuration.
\end{itemize}
\end{prop}
\begin{proof}
First we note that the curvature of the Deligne metric $\left\langle \phi\right\rangle $
on $\left\langle \mathcal{L},...,\mathcal{L}\right\rangle $ is non-negative
if $\phi$ is psh and vanishes if the corresponding ray $\phi^{t}$
is a weak geodesic, as follows from the push-forward formula \ref{eq:curvature of the deligne pairing}
Alternatively, since $\left\langle \phi\right\rangle $ is locally
bounded from above (by the continuity result in Prop \ref{prop:The-Ding-metric is cont})
it is enough to consider the holomorphically trivial case over $\Delta^{*}$
where the result amounts to a well-known property of the functional
$\mathcal{E}$ (see \cite{bbegz}). Combined with the positivity in
the previous lemma this shows that the Ding metric has positive curvature
current. More precisely, in the case when $X$ is singular we apply
the previous lemma to the line bundle $p^{*}\mathcal{L}+D'\rightarrow\mathcal{X}$
equipped with the metric $p^{*}\phi+\phi_{D'},$ where $\phi_{D'}$
is the singular psh metric on the line bundle $\mathcal{O}(D')$ induced
by $D'$ which satisfies $e^{-\phi_{D'}}\in L_{loc}^{1},$ since $D'$
is klt. The last point follows immediately from the previous proposition.\end{proof}
\begin{prop}
\label{prop:The-Ding-metric is cont}The Ding metric associated to
a weak geodesic ray is continuous on $\Delta^{*}$ up to the boundary
circle.\end{prop}
\begin{proof}
Let us first verify that if $\phi$ is a locally bounded positively
curved metric on $\mathcal{L}\rightarrow\mathcal{X}$ then the Deligne
metric $\left\langle \phi\right\rangle $ on $\left\langle \mathcal{L},...,\mathcal{L}\right\rangle \rightarrow\C$
is locally bounded on $\Delta$ and continuous at the boundary of
$\Delta.$ To this end we first recall that if $\psi$ is a smooth
metric on $\mathcal{L}$(i.e. the restriction to $\mathcal{X}$ of
a smooth metric) then it was shown by Moriwaki \cite[Theorem A]{mo}
that the corresponding Deligne metric $\left\langle \psi\right\rangle $
on the top Deligne product on $\left\langle \mathcal{L},...,\mathcal{L}\right\rangle \rightarrow\C$
is continuous. But since $\phi$ is a locally bounded metric on $\mathcal{L}$
we have that $u:=\phi-\psi$ is a bounded function on $\mathcal{X}$
and hence it follows from the change of metric formula \ref{eq:change of metric formula as energy}
that 
\[
\left|\left\langle \phi\right\rangle -\left\langle \psi\right\rangle \right|\leq c^{1}(L)^{n}\sup_{\mathcal{X}}|u|
\]
is bounded (where $L$ denotes the restriction of $\mathcal{L}$ to
a generic fiber). Hence $\left\langle \phi\right\rangle $ is locally
bounded, as desired. Alternatively, the local boundedness of $\left\langle \phi\right\rangle $
can be verified directly by induction over the relative dimension,
using the recursive definition of $\left\langle \phi\right\rangle $
\cite{zh}. Similarly, the continuity at $\tau=1$ follows from continuity
properties at $\tau=1$ of $\phi_{\tau}.$ Indeed, by Prop \ref{prop:reg for ma-eq}
we have that $\phi_{\tau}\rightarrow\phi_{1}$ uniformly as $\tau\rightarrow1,$
i.e. $\phi^{t}\rightarrow\phi^{0}$ and hence it follows from the
change of metrics formula that, in a fixed local trivialization close
to $\tau=1,$ we have 
\[
\left|\left\langle \rho(\tau)\phi_{\tau}\right\rangle -\left\langle \phi_{1}\right\rangle \right|\leq c(L)^{n}\sup_{X}|(\rho(\tau)^{*}\phi_{\tau})-\phi_{1}|\rightarrow0,
\]
 as $\tau\rightarrow1$ and moreover $v_{\phi}(\tau)\rightarrow v_{\phi}(1).$
This shows in particular that, over $\Delta^{*},$ the Ding metric
$\Phi(=\left\langle \phi\right\rangle +v_{\phi})$ may be identified
with a locally bounded $S^{1}-$invariant convex function (by the
previous proposition) which is continuous up to $\partial\Delta.$ 
\end{proof}

\subsection{\label{sub:Singularity-structure-of}Singularity structure of the
Ding metric }

We continue with the setup and notation in the previous section. We
will give a detailed description of the singularity of the Ding metric
at $\tau=0$ (which however is not used in the proof of Theorem \ref{thm:k-poly intro}).
The key point is the observation that the Lelong number $l_{0}$ at
$0$ of $L^{2/r}$- metrics as above can be expressed in terms the
log canonical thresholds of the corresponding central fiber. First
recall that the \emph{complex singularity exponen}t $c_{x}(v)$ at
a point $x$ in a complex manifold $X$ of a local psh function $v$
(i.e. defined on some neighborhood $U_{x}$ of $x)$ is defined by
\[
c_{x}(v):=\sup_{c\in\R}\left\{ c:\,\exists U_{x}\,\, e^{-cv}\in L^{1}(U_{x},dV)\right\} ,
\]
where $dV$ is a local volume form. When 
\begin{equation}
v=v_{D}:=\sum_{i}a_{i}\log|f_{i}|^{2}\,\,\,\, D:=\sum_{i}a_{i}D_{i}\label{eq:def of v D}
\end{equation}
where $f_{i}$ is a local holomorphic function determining a zero
prime divisor $D_{i}$ the number $c_{x}(v)$ coincides with the l\emph{og
canonical threshold at $x,$} denoted by$c_{x}(D),$ of the $\Q-$divisor
$D:$ 
\begin{equation}
c_{x}(v)=c_{x}(D)\label{eq:complex sing is lct of div}
\end{equation}
 (see \cite[Proposition 8.2]{ko}). The latter number admits a purely
algebraic definition valid for any log pair\emph{ }$(X,D),$ i.e.
without assuming $X$ non-singular: 
\[
c_{x}(D):=\sup_{c\in\R}\left\{ c:\, cD\,\mbox{is\,\ log canonical\ close\ to\,\ensuremath{x}}\right\} ,
\]
(compare Section \ref{sub:Fano-varieites-and}). More generally, given
a log pair $(X,\Delta)$ and an effective $\Q-$Cartier divisor $D$
on $X$ the \emph{log canonical threshold of $(X,\Delta,D),$ along
$Z,$} may be defined by

\[
c_{Z}(X,\Delta,D):=\sup_{c\in\R}\left\{ c:\,\Delta+cD\,\mbox{is\ lc\,\ close to\,\ensuremath{Z}}\right\} 
\]
\cite[Definition 8.1]{ko}. In particular, by definition, $(X,D)$
is lc iff $c_{X}(X,0,D)\geq1.$ It will be convenient to introduce
the following analytic counterpart of $c_{Z}(X,\Delta,D)$ obtained
by replacing $D$ with a psh function $v$ defined in a neighborhood
of $Z:$ 
\[
c_{Z}(X,\Delta,v):=\sup_{c\in\R}\left\{ c:\,\exists U_{Z}\,\, e^{-cv}\in L_{loc}^{1}(U_{Z},\mu_{(X,\Delta;\phi_{0})})\right\} ,
\]
 where $\phi_{0}$ is a fixed locally bounded metric on $-(K_{X}+\Delta)$
and $\mu_{(X,\Delta;\phi_{0})}$ denotes the corresponding measure
on $X$ (see Section \ref{sub:canonical measures}). By the boundedness
assumption on $\phi_{0}$ the definition above is independent of the
choice of $\phi_{0}.$ More generally, $v$ can be taken as a metric
on a $\Q-$line bundle $L\rightarrow X.$ The following generalization
of the identity \ref{eq:complex sing is lct of div} holds: 
\begin{lem}
\label{lem:singular complex sing is lct}Let $(X,\Delta)$ be a log
pair. For $v=v_{D}$ as in formula \ref{eq:def of v D} we have that 

\[
c_{Z}(X,\Delta,v)=c_{Z}(X,\Delta,D)
\]
In particular, $(X,\Delta)$ has log terminal singularities iff $\mu_{(X,\Delta;\phi_{0})}$
gives finite volume to $X.$\end{lem}
\begin{proof}
This is essentially well-known, but for completeness we provide a
proof (see \cite{d-k,ko} for the standard case when $X$ is smooth
and $\Delta$ is trivial and \cite[Lemma 6.8]{egz}\cite[Lemma 3.2]{bbegz}
for the last statement of the lemma). First note that it follows directly
from the definitions that it will be enough to show that $e^{-v}\in L_{loc}^{1}(U_{Z},\mu_{(X,\Delta;\phi_{0})})$
iff $(X,\Delta+D)$ is klt (since we may then replace $v$ by $cv$
and $D$ by $cD$ and take the sup with respect to $c).$ To this
end take a log resolution of $(X,\Delta+D)$ and denote by $\pi$
the corresponding morphism from $X'$ to $X.$ Denote by $\Delta'$
the divisor on $X'$ such that $(X',\Delta')$ corresponds to $(X,\Delta)$
as in formula \ref{eq:pull-back of log can}. Since $D$ is $\Q-$Cartier
this means that $(X',\Delta'+\pi^{*}(D))$ corresponds to $(X,\Delta+D).$
Next, using $\mu_{(X,\Delta;\phi_{0})}=\mu_{(X',\Delta';\pi^{*}\phi_{0})}$
gives that $v\in L_{loc}^{1}(U_{Z},\mu_{(X,\Delta;\phi_{0})}$ iff
$I:=\int_{\pi^{-1}(U_{Z})}e^{-((v_{\Delta'+\pi^{*}(D)})-v_{0})}dV<\infty$
(after perhaps shrinking $U_{Z})$ for some volume form $dV$ on $X,$
where $v_{0}$ is a fixed locally bounded metric on the $\Q-$line
bundle $\mathcal{O}(\Delta'+\pi^{*}(D)).$ But $X'$ is smooth and
$\Delta'+\pi^{*}(D)$ has simple normal crossings and hence it follows
from the basic fact that $c_{0}(\log|z|^{2})=1$ in $\C$ and Fubini's
theorem that the the integral $I$ is finite iff the coefficients
of $\Delta'+\pi^{*}(D)$ are $<1$ (just as in \cite[Lemma 6.8]{egz}\cite[Lemma 3.2]{bbegz})
which equivalently means that $(X,\Delta+D)$ is klt, as desired. \end{proof}
\begin{prop}
\label{prop:lelong of direct image}Assume that $\mathcal{X}$ is
a normal $\Q-$Gorenstein variety and $\pi:\,\mathcal{X}\rightarrow\C$
a projective morphism over $\C$ which is smooth (i.e. a submersion)
over $\C^{*}.$ Let $\mathcal{L}\rightarrow\mathcal{X}$ be a semi-positive
$\Q-$line coinciding with $-K_{\mathcal{X}/\C}$ over $\C^{*}$ and
$\phi$ a locally bounded metric on $\mathcal{L}$ with positive curvature
current. Denote by $l_{0}$ the Lelong number $l_{0}$ at $\tau=0$
of the induced $L^{2/r}-$metric on the line bundle $\pi_{*}(\mathcal{L}+K_{\mathcal{X}/\C})\rightarrow\C.$
Then 
\begin{equation}
l_{0}=1-c_{\mathcal{X}_{0}}(\mathcal{X},-\Delta,\mathcal{X}_{0})\label{eq:lelong in terms of lct}
\end{equation}
where $\Delta$ is the zero-divisor in $\mathcal{X}$ of any local
trivialization section of $\pi_{*}(\mathcal{L}+K_{\mathcal{X}/\C})\rightarrow\C,$
identified with an element of $H^{0}(U,\mathcal{L}+K_{\mathcal{X}/\C}).$
Moreover, denoting by $E_{i}$ the reduced components of $\mathcal{X}_{0}$
we define the numbers $m_{i}$ and $c_{i}$ by 
\[
\mathcal{X}{}_{0}=\sum_{i}m_{i}E_{i},\,\,\,\Delta'=\sum_{i}c_{i}E_{i},
\]
 the following holds:
\begin{itemize}
\item If $\mathcal{X}$ is smooth and the central fiber $\mathcal{X}_{0}$
has simple normal crossings, then 
\begin{equation}
l_{0}=\max_{i}\frac{m_{i}-1-c_{i}}{m_{i}}\label{eq:lelong as max}
\end{equation}

\item If $\mathcal{L}=-K_{\mathcal{X}/\C},$ then $l_{0}=0$ iff $(\mathcal{X},\mathcal{X}_{0})$
is log canonical near $\mathcal{X}_{0}$ iff $\mathcal{X}_{0}$ is
reduced and the normalization of $\mathcal{X}_{0}$ has log canonical
singularities. 
\end{itemize}
\end{prop}
\begin{proof}
Fix a local trivializing section of $\pi_{*}(r(\mathcal{L}+K_{\mathcal{X}/\C}))\rightarrow\C$
over a neighborhood $V$ of $0\in\C$ identified with a global holomorphic
section $s$ of $r(\mathcal{L}+K_{\mathcal{X}/\C})\rightarrow\mathcal{X}_{|V}$
as in the proof of Lemma and denote by $v(\tau)$ the corresponding
weight on $V$ (formula \ref{eq:definition of weight v}). By Lemma
\ref{lem:(Berndtsson-Paun)-Let-} $v(\tau)$ is subharmonic and we
denote by $l_{0}$ the Lelong number of $v$ at $\tau=0$ (as in Lemma
\ref{lem:Hilbert-Mumford}). It will be very useful to represent the
Lelong number $l_{0}$ as follows 
\begin{equation}
l_{0}=\inf\left\{ l:\,\int_{V}e^{-(v(\tau)+(1-l)\log|\tau|^{2})}id\tau\wedge d\bar{\tau}<\infty\right\} \label{eq:lelong as inf}
\end{equation}
(the equivalence with the ordinary definition follows immediately
from $c_{0}(\log|\tau|^{2})=1).$ Next recall that $\nu_{\phi}:=|s|^{2/r}e^{-\phi}$
defines a measure on $\mathcal{X}_{|V},$ naturally attached to $\phi$
(see Section \ref{sub:canonical measures}). The measure $\nu_{\phi}$
has the property that for, any continuous function $g$ on $\C,$
\begin{equation}
\int_{\mathcal{X}_{|V}}\nu_{\phi}\pi^{*}g=\int_{V}e^{-v(\tau)}g(\tau)id\tau\wedge d\bar{\tau}\label{eq:lift of integral to total space}
\end{equation}
The proof of formula \ref{eq:lelong in terms of lct} is simply a
matter of unraveling definitions. First assume, to fix ideas, that
$\mathcal{X}$ is smooth. Then there exists a locally bounded metric
$\psi$ on $-K_{\mathcal{X}}$ and $\left\Vert \cdot\right\Vert $
on $\mathcal{L}$ such that 
\begin{equation}
\nu_{\phi}=\left\Vert s\right\Vert ^{2}\mu_{\psi}\label{eq:nu in terms of mu}
\end{equation}
where $\mu_{\psi}$ is the measure on $\mathcal{X}$ corresponding
to $\psi$ (see Section \ref{sub:canonical measures}). Indeed, fixing
holomorphic coordinates $w=(w_{0},...,w_{n})$ on $U\subset\mathcal{X}$
and a trivialization $s_{\mathcal{L}}$ of $\mathcal{L}\rightarrow U$
the section $s$ of $r(\mathcal{L}+K_{\mathcal{X}/\C}),$ restricted
to $U,$ may be written as $s=f_{U}s_{\mathcal{L}}\otimes dw\otimes\pi^{*}\frac{\partial}{\partial\tau},$
for a holomorphic function $f_{U}$ on $U$ and thus on $U$ 
\[
\nu_{\phi_{U}}=|f_{U}(w)|^{2/r}e^{-\phi_{U}(w)}i^{n^{2}}dw\wedge d\bar{w},
\]
 where, by assumption, $\phi$ is bounded on $U.$ This proves formula
\ref{eq:nu in terms of mu} in case $\mathcal{X}$ is smooth. More
generally, if $\mathcal{X}$ is $\Q-$Gorenstein, then $\nu_{\phi}$
and $\mu_{\psi}$ are still well-defined and by the argument above
above the relation \ref{eq:nu in terms of mu} holds on the regular
locus of of $\mathcal{X}$ and thus everywhere since the two measures
do not charge the singular locus of $\mathcal{X}.$ In particular,
combining formula \ref{eq:lift of integral to total space} (for $g(\tau)=e^{-(1-l)\log|\tau|^{2})})$
and formula \ref{eq:nu in terms of mu} gives 
\begin{equation}
l_{0}=\inf\left\{ l:\,\int_{\mathcal{X}_{|V}}e^{-(1-l)\log|\tau|^{2})}\left\Vert s\right\Vert ^{2}\mu_{\psi}<\infty\right\} =1-c_{\mathcal{X}_{0}}(\mathcal{X},-\Delta,\mathcal{X}_{0})\label{eq:form for lelong in proof}
\end{equation}
 using in the last equality Lemma \ref{lem:singular complex sing is lct}
(applied to the log pair $(\mathcal{X},-\Delta)$ where $\Delta$
is the zero-divisor of $s$ and with $v=\log|\tau|^{2}$ and $D=\mathcal{X}_{0})$
which concludes the proof of formula \ref{eq:lelong in terms of lct}.
Formula \ref{eq:lelong as max} then follows immediately from basic
formula for log canonical thresholds of simple normal crossing divisors.
For completeness we provide a proof: by assumption $\Delta+c\mathcal{X}_{0}$
has simple normal crossings and 
\[
-\Delta+c\mathcal{X}_{0}=\sum_{i}(-c_{i}+cm_{i})E_{i}
\]
 and since $c_{\mathcal{X}_{0}}(\mathcal{X},\Delta,\mathcal{X}_{0})$
is the sup over all $c$ such that the coefficients above are $\leq1$
we get $c_{\mathcal{X}_{0}}(\mathcal{X},\Delta,\mathcal{X}_{0})=\min_{i}\frac{1+c_{i}}{m_{i}}$
which, by formula \ref{eq:lelong in terms of lct}, proves the formula
in the first point. To prove the second point we apply formula \ref{eq:form for lelong in proof}
to the case where $\Delta=0$ and thus $l_{0}=0$ iff $1-c_{\mathcal{X}_{0}}(\mathcal{X},\mathcal{X}_{0})=0$,
i.e. iff $(\mathcal{X},\mathcal{X}_{0})$ if log canonical. Now, if
$(\mathcal{X},\mathcal{X}_{0})$ is log canonical then it follows
that $\mathcal{X}_{0}$ is reduced and, by adjunction, that its normalization
has log canonical singularities (see \cite[2.7]{al01}). Finally,
the converse follows from ``inversion of adjunction'', i.e. from
the main result of \cite{kawi}, previously conjectured by Shokurov
(the special case when $\mathcal{X}_{0}$ has log terminal singularities
follows from a previous result of Kollar et al \cite[Theorem 7.5]{ko}).
\end{proof}
Combining the last point in the previous proposition with Lemma \ref{lem:char of special test}
gives the following
\begin{cor}
\label{cor:vanshing of lelong for gen ding}Let $(\mathcal{X},\mathcal{L})$
be a test configuration (with a priori non-normal total space $\mathcal{X})$
for a smooth Fano manifold $(X,-K_{X})$ such that the central fiber
$\mathcal{X}_{0}$ is normal. Then $\mathcal{X}$ is $\Q-$Gorenstein
with $\mathcal{L}=-K_{\mathcal{X}/\C}$ and $l_{0}=0$ iff the variety
$X_{0}$ has log canonical singularities. In particular, if $\mathcal{X}$
is a special test configuration then $l_{0}=0.$
\end{cor}

\subsubsection{Interlude on Calabi-Yau degenerations}

Before continuing we make a brief detour to point out that Proposition
\ref{prop:lelong of direct image} also has some applicatiotions to
the non-Fano case when $\mathcal{X}$ is Gorenstein and $K_{\mathcal{X}}$
is a trivial line bundle and hence the generic fiber $X_{\tau}$ is
a Calabi-Yau manifold. Then $F:=\pi_{*}(K_{\mathcal{X}/\C})\rightarrow\C$
is the Hodge line bundle and its curvature $dd^{c}v(\tau)$ (wher
$v(\tau)$ is given by formula \ref{eq:definition of weight v}) coincides
with the Weil-Peterson metric $\omega_{WP}$ on the punctured base
$\C^{*}$ i.e. the pull-back of the Weil-Peterson metric on the moduli
space of Calabai-Yau manifolds (see \cite{w c-l} and references therein).
In this case $v(\tau)$ admits an expansion of the form $v(\tau)=l_{0}\log|\tau|^{2}+\beta\log(\left|\log|\tau|^{2}\right|^{-1})+O(1)$
as $\tau\rightarrow0,$ for some integer $\beta\in[0,n],$ where $O(1)$
denotes a term which is bounded in $C_{loc}^{2}$ \cite[Theorem 4.1]{w c-l}.
Accordingly, Proposition \ref{prop:lelong of direct image} applied
to this case says that $l_{0}=c(\mathcal{X},\mathcal{X}_{0})$ and
$l_{0}=0$ iff $v(\tau)$ has at worst log log singularities, i.e.
\begin{equation}
v(\tau)=\beta\log(\left|\log|\tau|^{2}\right|^{-1})+O(1)\label{eq:log log sing}
\end{equation}
iff $\mathcal{X}_{0}$ is reduced and its normalization has log canonical
singularities. This observation can be used to simplify the proof
of Theorem 1.2 in \cite{to} (which answers in the affirmative a question
of Wang) saying that \emph{if the Weil-Peterson metric $\omega_{WP}$
on the base of the Calabi-Yau fibration $\pi:\,\mathcal{X}^{*}\rightarrow\C^{*}$
as above is not complete as $\tau\rightarrow0,$ then, after a base
change, the central fiber $\mathcal{X}_{0}$ may be modified so that
$\mathcal{X}_{0}$ is reduced and has canonical singularities. }The
starting point is, following \cite{to}, the recent advances in the
MMP which give that, after a base change, one can assume that $(\mathcal{X},\mathcal{X}_{0})$
is relatively minimal (i.e. divisorially log terminal, dlt) and in
particular log canonical. Hence, by Prop \ref{prop:lelong of direct image}
$l_{0}=0$ i.e. $v(\tau)$ has at worst a log log singularity as in
formula \ref{eq:log log sing}. The incompleteness assumption on $\omega_{WP}=dd^{c}v=\beta\omega_{P}+O(1),$
where $\omega_{P}$ is the Poincaré form on $\C^{*}$ thus forces
$\beta=0$ and hence $v(\tau)$ is bounded as $\tau\rightarrow0.$
But then one concludes that $X_{0}$ is irreducible with log terminal
singularities (and hence canonical singularities since $K_{X}$ is
assumed Cartier), as desired (the last claim is the content of the
implication $(c)\implies(a)$ in Theorem 1.1 in \cite{to}, whose
proof is due to Sebastien Boucksom: by Fatou's lemma $i^{n^{2}}\int_{X_{0}}\Omega\wedge\Omega<\infty$
for a non-trivial $\Omega\in H^{0}(X_{0},K_{X_{0}})$ and adjunction,
using the dlt assumption, then implies that $X_{0}$ is irreducible
and normal and thus log terminal by Lemma \ref{lem:singular complex sing is lct}).

\subsection{Expressing the Donaldson-Futaki invariant in terms of the Ding functional }

Consider the following $\Q-$line bundle over $\C$ defined in terms
of the fixed log resolution:
\[
\eta':=-\frac{1}{(n+1)L^{n}}\left\langle \mathcal{L}',...,\mathcal{L}'\right\rangle +\frac{1}{L^{n}}\left\langle \mathcal{L}'+K_{\mathcal{X}'/\C}+D',\mathcal{L}'...,\mathcal{L}'\right\rangle ,
\]
(recall that $L=-K_{X}$ here so that $\mu=n$ in formula \ref{eq:def of eta}).
Then 
\begin{equation}
DF(\mathcal{X},\mathcal{L})=w_{0}(\eta')\label{eq:df on resol in terms of eta}
\end{equation}
Indeed, combining Prop \ref{prop:wang} with the push-forward formula
for intersection numbers gives 
\[
(n+1)L^{n}(DF(\mathcal{X},\mathcal{L}))=np^{*}(\mathcal{\bar{\mathcal{L}}})\cdot p^{*}(\mathcal{\bar{\mathcal{L}}})\cdots p^{*}(\mathcal{\bar{\mathcal{L}}})+(n+1)p^{*}K_{\overline{\mathcal{X}}/\P^{1}}\cdot p^{*}(\mathcal{\bar{\mathcal{L}}})\cdots p^{*}(\mathcal{\bar{\mathcal{L}}}),
\]
Now, $p^{*}K_{\overline{\mathcal{X}}/\P^{1}}$ is equal to $K_{\overline{\mathcal{X}'}/\P^{1}}+D'$
modulo the $p-$exceptional divisor $E',$ which give no contribution
to the intersection number above, since $p^{*}\mathcal{L}$ is trivial
on $E'.$ Formula \ref{eq:df on resol in terms of eta} then follows
precisely as in the proof of Prop \ref{pro:(Phong-Ross-Sturm)-:-The}. 
\begin{lem}
\label{lem:df bigger than ding weight}We have that $DF(\mathcal{X},\mathcal{L})=w_{0}(\eta')\geq w_{0}(\delta')$\end{lem}
\begin{proof}
Using $DF(\mathcal{X},\mathcal{L})=w_{0}(\eta')$ and decomposing
\begin{equation}
\eta'=\delta'+\left(\frac{1}{L^{n}}\left\langle K_{\mathcal{X}'/\C}+D'+\mathcal{L}',\mathcal{L}',...,\mathcal{L}'\right\rangle -\pi'_{*}(\mathcal{L}'+K_{\mathcal{X}'/\C}+D')\right)\label{eq:eta as sum of delta' + correction}
\end{equation}
 reveals that it is enough to show that $w_{0}\left(\frac{1}{L^{n}}\left\langle K_{\mathcal{X}'/\C}+D'\text{+}\mathcal{L}',\mathcal{L}',...,\mathcal{L}'\right\rangle -\pi'_{*}(\mathcal{L}'+K_{\mathcal{X}'/\C})+D'\right)=$
\[
=\frac{1}{L^{n}}(K_{\bar{\mathcal{X}'}/\P^{1}}+D'+\bar{\mathcal{L}'})\cdot\bar{\mathcal{L}'}\cdots\bar{\mathcal{L}'}-\deg\pi'_{*}(\bar{\mathcal{L}'}+K_{\bar{\mathcal{X}'}/\P^{1}}+D')\geq0,
\]
 where we have used the the compactification $\bar{\mathcal{X}'}$
of the resolution $\mathcal{X}'$ and the corresponding extension
$\bar{\mathcal{L}'}$ of $\mathcal{L}'$ in the first equality (together
with formula \ref{eq:weight as degree}). To simplify the notation
we consider the case when $X$ is smooth so that $D'=0,$ but the
general case is essentially the same. Note that the formula above
involving the degrees is invariant under $\mathcal{L}'\rightarrow\mathcal{L}'\otimes\pi'^{*}\mathcal{O}_{\P^{1}}(m)$
and hence we may as well assume that $\deg\pi'_{*}(\bar{\mathcal{L}'}+K_{\bar{\mathcal{X}'}/\P^{1}})=0$
(this corresponds to a performing an overall twisting of the original
action $\rho$ on $\mathcal{L}).$ But the latter vanishing means
that the line bundle $\pi'_{*}(\bar{\mathcal{L}'}+K_{\bar{\mathcal{X}'}/\P^{1}})\rightarrow\P^{1}$
admits a global trivializing holomorphic section $s,$ unique up to
scaling by a non-zero complex constant. In particular, $s$ induces
a global holomorphic section $\bar{\mathcal{L}'}+K_{\bar{\mathcal{X}'}/\P^{1}}\rightarrow\bar{\mathcal{X}'}.$
This means that $\bar{\mathcal{L}'}+K_{\bar{\mathcal{X}'}/\P^{1}}$
is linearly equivalent to an effective divisor $E$ (whose support
is contained in the central fiber). But then it follows, since $\bar{\mathcal{L}'}$
is relatively semi-ample, that 
\begin{equation}
(K_{\bar{\mathcal{X}'}/\P^{1}}+\bar{\mathcal{L}'})\cdot\bar{\mathcal{L}'}\cdots\bar{\mathcal{L}'}=E\cdot\bar{\mathcal{L}'}\cdots\bar{\mathcal{L}'}\geq0\label{eq:intersection in terms of e}
\end{equation}
 which thus concludes the proof.
\end{proof}
Now we are ready to prove the following more precise version of Theorem
\ref{thm:DF=00003Dding intro}, stated in the introduction:
\begin{thm}
\label{thm:df=00003Dding}Let $X$ be a Fano variety with log terminal
singularities and \emph{$(\mathcal{X},\mathcal{L})$ a test configuration
(with normal total space) for $(X,-K_{X})$ with $\phi$ denoting
a locally bounded metric on $\mathcal{L}\rightarrow\mathcal{X}\rightarrow\Delta$
with positive curvature current. Then, setting $\phi^{t}:=\rho(\tau)^{*}\phi_{\tau},$
identified with a ray of metrics on $-K_{X}$ we have }
\begin{equation}
DF(\mathcal{X},\mathcal{L})=\lim_{t\rightarrow\infty}\frac{d}{dt}\mathcal{D}(\phi^{t})+q,\label{eq:df in terms of ding in thm}
\end{equation}
 where $q$ is a non-negative rational number determined by the polarized
central fiber $(\mathcal{X}_{0},\mathcal{L}_{|\mathcal{X}_{0}})$
with the following properties, in the case that $X$ is smooth:
\begin{itemize}
\item If $(\mathcal{X}',\mathcal{X}'_{0})$ is a given log resolution of
$(\mathcal{X},\mathcal{X}{}_{0})$ with $E_{i}$ denoting the reduced
components of $\mathcal{X}'_{0},$ then the following formula holds
\begin{equation}
q=\max_{i}\frac{m_{i}-1-c_{i}}{m_{i}}+\frac{1}{L^{n}}\sum_{i}c_{i}\mathcal{L}'^{n}\cdot E_{i},\label{eq:formula for q in thm}
\end{equation}
where $m_{i}$ and $c_{i}$ are the order of vanishing along $E_{i}$
of $\mathcal{X}'_{0}$ of $\pi'^{*}\tau$ and any given non-trivial
meromorphic (multi-)section $s'$ of $\mathcal{L}'+\mathcal{K}_{\mathcal{X}'/\C}\rightarrow\mathcal{X}',$
respectively, i.e. if $\Delta'$ denotes the zero-divisor of $s',$
then
\[
\mathcal{X}'_{0}=\sum_{i}m_{i}E_{i},\,\,\,\Delta'=\sum_{i}c_{i}E_{i}
\]

\item $q=0$ iff $\mathcal{X}$ is $\Q-$Gorenstein with $\mathcal{L}$
isomorphic to $-K_{\mathcal{X}/\C}$ and $\mathcal{X}_{0}$ is reduced
and its normalization has log canonical singularities.
\end{itemize}
\end{thm}
\begin{proof}
First observe that we may as well assume that $\phi^{t}$ is a weak
geodesic ray. Indeed, if $\psi^{t}$ is the ray corresponding to a
locally bounded metric $\psi$ on $\mathcal{L}$ then $\phi-\psi$
is uniformly bounded and hence $f(t):=f_{1}(t)-f_{2}(t):=v_{\phi}(e^{-t/2})-v_{\psi}(e^{-t/2})$
(compare formula \ref{eq:def of v phi}) is bounded as $t\rightarrow\infty.$
But since $f_{i}(t)$ is convex (by Lemma \ref{lem:(Berndtsson-Paun)-Let-})
the limit of $df_{i}(t)/dt$ as $t\rightarrow\infty$ exists (a priori
in $]0,\infty])$ and since $f(t)$ is bounded it follows that the
limits of $df_{i}(t)/dt$ coincide. Similarly, $g(t):=\mathcal{E}(\phi_{t})-\mathcal{E}(\psi_{t})$
is a difference of convex functions (compare the proof of Prop \ref{prop:The-Ding-metric is cont})
and hence the limits of $d\mathcal{E}(\phi_{t})/dt$ and $d\mathcal{E}(\psi_{t})/dt$
coincide and thus so do the limits of $d\mathcal{D}(\phi_{t})/dt$
and $d\mathcal{D}(\psi_{t})/dt.$ 

To simplify the notation we will in the rest of the proof assume that
$X$ is smooth so that $D'=0,$ but the proof in the general case
is essentially the same. Fix a trivializing section $s$ of $\pi'_{*}(\mathcal{L}'+K_{\mathcal{X}'/\C})\rightarrow\C.$
The section $s$ induces an isomorphism between $\mathcal{L}$ and
$-K_{\mathcal{X}^{*}/\C^{*}}$ over $\mathcal{X}^{*}.$ In fact, since
the formula for $DF(\mathcal{X},\mathcal{L})$ is invariant under
an overall twist of the action $\rho$ on $\mathcal{L}$ we may as
well assume that $s$ is an invariant section and hence, using the
notation in the previous lemma $\deg\pi'_{*}(\bar{\mathcal{L}'}+K_{\bar{\mathcal{X}'}/\P^{1}})=0.$
We also fix a trivializing (mulit-)section $\sigma_{1}$ of the $\Q-$line
$-\frac{1}{L^{n}(n+1)}\left\langle \mathcal{L}',...,\mathcal{L}'\right\rangle _{|\tau=1}.$
By Lemma \ref{lem:Hilbert-Mumford} 
\[
w(\delta')=-\lim_{t\rightarrow\infty}\frac{d}{dt}\log\left\Vert \rho(\tau)S_{1}\right\Vert _{\Phi'}^{2}+l_{0},
\]
 where $S_{1}=\sigma_{1}\otimes s{}_{1}\in\delta'_{|\tau=1}$ and
$l_{0}$ is the Lelong number of the metric $\Phi'$ on $\delta'.$
Now, $\left\Vert \rho(\tau)S_{1}\right\Vert _{\Phi'}^{2}=\left\Vert S_{1}\right\Vert _{\rho(\tau)^{*}\Phi'_{|\mathcal{X}_{\tau}}}^{2}$
and hence setting $\phi^{t}=\rho(\tau)^{*}\phi_{\tau}$ and fixing
a metric $\psi$ on $-K_{X}$ we can write 
\[
-\log\left\Vert \rho(\tau)S_{1}\right\Vert _{\Phi'}^{2}+\log\left\Vert \sigma_{1}\right\Vert _{\psi_{D}}^{2}=-\frac{1}{L^{n}}\mathcal{E}(\phi_{t},\psi)-\log\int_{X}e^{-\phi_{t}}:=\mathcal{D}(\phi_{t})
\]
 using the previous identifications and the change of metrics formula
for the Deligne pairing \ref{eq:change of metric formula as energy}.
Now, using $DF(\mathcal{X},\mathcal{L})=w_{0}(\eta')$ and the decomposition
formula in Lemma \ref{eq:eta as sum of delta' + correction} together
with formula \ref{eq:intersection in terms of e} and Lemma \ref{lem:Hilbert-Mumford}
gives 
\begin{equation}
DF(\mathcal{X},\mathcal{L})=\lim_{t\rightarrow\infty}\frac{d}{dt}\mathcal{D}(\phi^{t})+q,\,\,\,\, q:=l_{0}+\frac{1}{L^{n}}\sum_{i}c_{i}\mathcal{L}'^{n}\cdot E_{i},\label{eq:q in proof of formula for df}
\end{equation}
 where $c_{i}$ is the order of vanishing of $s$ along $E_{i},$
when $s$ is viewed as a global holomorpic section of $\mathcal{L}'+K_{\mathcal{X}'/\C}\rightarrow\mathcal{X}'$.
Moreover, by the trivializing assumption on $s$ the numbers $c_{i}$
above coincide with thouse appearing in the formula for $l_{0}$ in
Prop \ref{prop:lelong of direct image} and hence formula \ref{eq:formula for q in thm}
follows. Note that both terms appearing in the definition of $q$
above are non-negative and hence $q\geq0.$ Indeed, by Prop \ref{prop:The-Ding-metric has pos prop}
$l_{0}\geq0$ and the non-negativity of the second terms follows directly
from the definitions giving that $c_{i}\geq0$ and $\mathcal{L}'$
is semi-ample. Next note that a general meromorphic section of $\mathcal{L}'+K_{\mathcal{X}'/\C}\rightarrow\mathcal{X}'$
may be written as $f(\tau)s'$ for $f(\tau)$ a meromorphic function,
whose vanishing (or pole) order at $\tau=0$ we denote by $m.$ Since
the formula for $q$ is invariant under $c_{i}\rightarrow c_{i}+m$
the case of a general section thus follows. Accordingly, in the rest
of the proof we will take $c_{i}$ to be the non-negative numbers
determined by the globally trivializing section $s'.$ 

Now, by formula \ref{eq:q in proof of formula for df} $q=0$ iff
the follows condition holds: $l_{0}=0$ and $c_{i}=0$ for all index
$i$ in the set $I$ defined by the condition $\mathcal{L}'^{n}\cdot E_{i}>0.$
But by formula \ref{eq:formula for q in thm} the latter condition
holds iff $m_{i}=1$ and $c_{i}=0$ for any $i\in I,$ i.e. any $i$
such that $E_{i}$ is not $p-$exceptional for the log resolution
$p$ (since $\mathcal{L}$ is assumed relative ample). Since $\mathcal{X}$
is normal we may, by Hironaka's theorem, take $p$ to be an isomorphism
on $p^{-1}(\mathcal{X}-\mathcal{Z}),$ where $\mathcal{Z}$ is a subvariety
of codimension at least two (containing the singular locus of $\mathcal{X}).$
Hence, if $q=0$ then $\mathcal{X}_{0}$ is reduced at any point in
$\mathcal{X}-\mathcal{Z}$ (using that, by the previous argument,
$m_{i}=1$ for any non $p-$exceptional $E_{i}$$).$ In other words,
$q=0$ implies that the central fiber $\mathcal{X}_{0},$ viewed as
a divisor on the normal variety $\mathcal{X}$, is reduced. But then,
since $q=0$ also implies that $c_{i}=0$ for any $i\in I$ it also
follows that $\mathcal{L}$ is isomorphic to $-K_{\mathcal{X}/\C}$
on $\mathcal{X}-\mathcal{Z}$ and since the codimension of $\mathcal{X}-\mathcal{Z}$
is at least two $\mathcal{L}$ is thus the unique extension of $-K_{\mathcal{X}/\C}$
from the regular locus of $\mathcal{X},$ which, by definition, means
that $\mathcal{X}$ is $\Q-$Gorenstein. Conversely, if $\mathcal{X}_{0}$
is reduced and $\mathcal{X}$ is $\Q-$Gorenstein, it follows from
Prop \ref{prop:lelong of direct image} that $l_{0}=0$ and hence
$q=0.$ 
\end{proof}

\subsection{\label{sub:An-alternative-proof}Conclusion of the proof of Theorem
\ref{thm:k-poly intro} }

Given a test configuration $(\mathcal{X},\mathcal{L})$ for $(X,-K_{X})$
Theorem \ref{thm:df=00003Dding} gives that for any weak geodesic
$\phi^{t}$ ray emanating from any given metric on $\mathcal{L}$
which is associated to $(\mathcal{X},\mathcal{L})$ we have 
\[
DF(\mathcal{X},\mathcal{L})=\lim_{t\rightarrow\infty}\frac{d}{dt}\mathcal{D}(\phi^{t})+q,\,\,\,\,\, q\geq0
\]
Next, by the convexity of $\mathcal{D}(\phi^{t})$ the limit in the
right hand side above is bounded from below by the right derivative
$\frac{d}{dt}\mathcal{D}(\phi_{t})_{|t=0^{+}}$ which, by formula
\ref{eq:ke is almost critical pt}, is non-negative if $\phi^{0}$
is taken as a Kähler-Einstein metric. Thus $DF(\mathcal{X},\mathcal{L})\geq0$
and if $DF(\mathcal{X},\mathcal{L})=0$ then it must, since $q\geq0,$
be that $\lim_{t\rightarrow\infty}\frac{d}{dt}\mathcal{D}(\phi_{t})=0$
and hence $\mathcal{D}(\phi_{t})$ is affine so that the second point
in Prop \ref{prop:The-Ding-metric has pos prop} implies that $\mathcal{X}$
is isomorphic to a product test configuration.

\section{Ramifications and applications}

\subsection{\label{sub:An-analog-of donalds conj} An analog of Donaldson's conjecture
about geodesic stability}

Combining the results above with the very recent existence result
in \cite{c-d-s} one arrives at the following analog of a conjecture
of Donaldson \cite{do00} (see \cite{c} for partial results about
Donaldson's original conjecture):
\begin{thm}
\label{thm:don conj for ding}Let $X$ be a Fano manifold. Then precisely
one of the following two alternatives holds:
\begin{enumerate}
\item $X$ admits a Kähler-Einstein metric
\item For any given $\phi^{0}\in\mathcal{H}_{b}(-K_{X})$ there exists a
weak geodesic ray $\phi^{t}$ in $\mathcal{H}_{b}(-K_{X})$ emenating
from $\phi^{0}$ such that the Ding functional $\mathcal{D}(\phi^{t})$
is strictly descreasing for sufficently large times.
\end{enumerate}
\end{thm}
\begin{proof}
If the first alternative holds, then it follows immediately, by the
convexity of $\mathcal{D}(\phi^{t})$ (just as in the proof of Theorem
\ref{thm:k-poly intro}) that the second alternative cannot hold.
Now assume that the first alternative does not hold. Then, by the
results in \cite{c-d-s} $X$ is not K-polystable along special test
configurations, i.e. there exists a special test configuration $\mathcal{X}$
such that one of the following alternatives hold $(a)$ $DF(\mathcal{X})<0$
or $(b)$ $DF(\mathcal{X})=0,$ but $\mathcal{X}$ is a not a product
test configuration. Now, by Theorem \ref{thm:df=00003Dding} $DF(\mathcal{X})$
is the large time limit of $d\mathcal{D}(\phi^{t})/dt$ where $\phi^{t}$
is any weak geodesic ray attached to $\mathcal{X}.$ Assuming, to
get a contradiction, that alternative above $2$ does not hold, there
exists a sequence of $t_{i}\rightarrow\infty$ such that $d\mathcal{D}(\phi^{t})/dt\geq0.$
By the convexity of $\mathcal{D}(\phi^{t})$ this means that there
exists a $T>0$ such that $d\mathcal{D}(\phi^{t})/dt\geq0$ on $[T,\infty[.$
In particular, $DF(\mathcal{X})\geq0$ and hence it must be that alternative
$(b)$ holds, i.e. $DF(\mathcal{X})=0$ and thus by, convexity, $\mathcal{D}(\phi^{t})$
is affine on $[T,\infty[.$ But then it follows from Prop \ref{prop:The-Ding-metric has pos prop}
that $\mathcal{X}$ is a product test configuration, which contradicts
$(b).$
\end{proof}
In the original conjecture of Donaldson $(X,-K_{X})$ is replaced
by a general polarized manifold $(X,L)$ and the Ding functional with
the Mabuchi functional. Moreover, originally Donaldson's conjecture
asked for bona fide geodesic rays $\phi^{t}$ of smooth and stricly
positively curved metrics, but in view of the recent theory about
geodesics one would expect that the best regularity that one can hope
for is that $\omega^{t}:=dd^{c}\phi^{t}$ be locally bounded, if $\omega^{0}$
is a Kähler form. By the regularity results in \cite{p-s1b}, this
is indeed the case in Theorem \ref{thm:don conj for ding} above.
Finally it should be pointed out that a weaker version of Theorem
\ref{thm:don conj for ding} has independently been obtained in \cite{d-h},
where it is assumed that $X$ admits no holomorphic vector fields
and where the ``destabilizing'' weak geodesic ray $\phi^{t}$ appearing
in item $2$ is merely in a finite energy class. On the other hand
the proof in \cite{d-h} dos not rely on the results in \cite{c-d-s}
(but rather estimates along the Kähler-Ricci flow).

\subsection{\label{sub:Applications-to-bounds}Bounds on the Ricci potential
and Perelman's $\lambda-$entropy functional}

Let now $X$ be a Fano manifold and denote by $\mathcal{K}(X)$ the
space of all Kähler metrics $\omega$ in $c_{1}(X)$ (equivalently,
$\omega=dd^{c}\phi$ for some strictly positively curved metric $\phi$
on $-K_{X}).$ In this section we will use the normalization $V:=c_{1}(X)^{n}:=\int_{X}\omega^{n}.$
Recall that the Ricci potential $h_{\omega}$ is the function on $X$
defined by $dd^{c}h_{\omega}=\mbox{Ric }\omega-\omega$ together with
the normalization condition $\int e^{h_{\omega}}\omega^{n}/V=1,$
which in terms of the previous notation means that $h_{dd^{c}\phi}:=h_{\phi}:=-\log(\frac{(dd^{c}\phi)^{n}/V}{e^{-\phi}/\int e^{-\phi}}).$
Note in particular that 
\[
\left\Vert 1-e^{h_{\omega}}\right\Vert _{L^{1}(X,\omega)}=\left\Vert \frac{1}{V}(dd^{c}\phi)^{n}-\frac{e^{-\phi}}{\int e^{-\phi}}\right\Vert ,
\]
 where the norm in the right hand side is the total variation norm
on the space of absolutely continuous probability measures on $X.$

Next, let $(\mathcal{X},\mathcal{L})$ be a test configuration of
a polarized manifold $(X,L)$ and define its ``$L^{\infty}-$norm''
by 
\begin{equation}
\left\Vert (\mathcal{X},\mathcal{L})\right\Vert _{\infty}:=\left\Vert \frac{d\phi^{t}}{dt}_{|t=0}\right\Vert _{L^{\infty}(X)},\label{eq:def of l infty norm of test}
\end{equation}
 where $\phi^{t}$ is the (weak) geodesic determined by $\mathcal{X},$
emanating from any fixed reference metric $\phi^{0}\in\mathcal{H}(X,L).$
The point is that if $\left\Vert (\mathcal{X},\mathcal{L})\right\Vert _{\infty}\neq0$
then the\emph{ normalized Donaldson-Futaki invariant} $DF(\mathcal{X},\mathcal{L})/\left\Vert (\mathcal{X},\mathcal{L})\right\Vert _{\infty}$
is independent of base changes of $(\mathcal{X},\mathcal{L}),$ induced
by $\tau\rightarrow\tau^{m}$ (which correspond to reparametrizations
of $\phi^{t},$ induced by $t\mapsto mt$$).$ We will be relying
on the following lemma which is a special case of a very recent result
of Hisamoto \cite[Theorem 1.1]{hi}:
\begin{lem}
\label{lem:l infty norm}The number $\left\Vert (\mathcal{X},\mathcal{L})\right\Vert _{\infty}$
is well-defined, i.e. it is independent of $\phi_{0}.$ 
\end{lem}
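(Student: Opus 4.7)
The plan is to identify $\left\Vert (\mathcal{X},\mathcal{L})\right\Vert _{\infty}$ with an intrinsic invariant of the test configuration that manifestly does not depend on the boundary data $\phi_{0}$. Concretely, let $\lambda_{+}$ and $\lambda_{-}$ denote the supremum and infimum, as $k\to\infty$, of the weights of the $\C^{*}$-action $\rho$ on $H^{0}(\mathcal{X}_{0},k\mathcal{L}_{|\mathcal{X}_{0}})/k$ (i.e.\ the extremes of the support of the associated Duistermaat--Heckman measure). I claim
\[
\left\Vert \frac{d\phi^{t}}{dt}_{|t=0}\right\Vert _{L^{\infty}(X)}=\max(|\lambda_{+}|,|\lambda_{-}|),
\]
whose right-hand side clearly depends only on $(\mathcal{X},\mathcal{L})$.

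For the upper bound, I would first invoke the envelope comparison principle already used in the proof of Proposition \ref{prop:reg for ma-eq}: if $\phi_{0},\tilde{\phi}_{0}\in\mathcal{H}(X,L)$ satisfy $\sup_{X}|\phi_{0}-\tilde{\phi}_{0}|\le C$, then the corresponding envelopes on $M$ differ uniformly by at most $C$, and consequently so do the geodesic rays $\phi^{t}$ and $\tilde{\phi}^{t}$ for every $t\ge 0$. Dividing by $t$ and letting $t\to\infty$ shows that the asymptotic slope $\Phi(x):=\lim_{t\to\infty}\phi^{t}(x)/t$, which exists by convexity of $t\mapsto\phi^{t}(x)$, is independent of the choice of $\phi_{0}$. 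Specializing to the Fubini--Study metric $\phi_{FS}$ coming from an equivariant embedding $\mathcal{X}\hookrightarrow\P^{N}\times\C$ (cf.\ the end of Section \ref{sub:K-polystability-and-test}) and computing directly on the Bergman-type geodesic $\phi_{FS}^{t}=\rho(\tau)^{*}\phi_{FS}/k$ via a weight-basis expansion of sections of $k\mathcal{L}_{|\mathcal{X}_{0}}$, one obtains $\sup_{X}\Phi=\lambda_{+}$ and $\inf_{X}\Phi=\lambda_{-}$ in the $k\to\infty$ limit. Combining with the standard pointwise inequality $\dot{\phi}^{0+}(x)\le\Phi(x)$, valid for any convex function of $t$ (and its dual), then yields $\left\Vert \dot{\phi}^{0+}\right\Vert _{L^{\infty}(X)}\le\max(|\lambda_{+}|,|\lambda_{-}|)$.

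For the matching lower bound I would exploit the structure of $\rho$ near its fixed locus in $\mathcal{X}_{0}$. For $x\in X$ whose orbit $\tau\mapsto\rho(\tau)x$ attracts as $\tau\to 0$ to a fixed point $p\in\mathcal{X}_{0}$ on which $\rho$ acts on $\mathcal{L}_{|p}$ by the character $\tau^{\lambda_{\pm}}$, the defining relation $\phi^{t}=\rho(\tau)^{*}\phi_{\tau}$ together with the linearization of the action at $p$ force $\phi^{t}(x)-\phi^{0}(x)=\lambda_{\pm}t+O(1)$, so $\dot{\phi}^{0+}(x)\to\lambda_{\pm}$ along such approaching orbits. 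This shows the supremum of $|\dot{\phi}^{0+}|$ actually attains $\max(|\lambda_{+}|,|\lambda_{-}|)$ regardless of $\phi_{0}$, completing the identification.

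The main obstacle I anticipate is the sharpness in this last step. The right derivative $\dot{\phi}^{0+}(x)$ is genuinely sensitive to $\phi_{0}$ at a generic $x$, and the pointwise inequality $\dot{\phi}^{0+}(x)\le\Phi(x)$ is typically strict away from orbits accumulating at the fixed locus; it is only the extremes over $X$ that are forced to coincide. Making this rigorous requires carefully controlling the linearization error of the $\C^{*}$-action near the fixed locus and, in the common case where no fixed point of $\rho$ lies inside $X_{1}$, using the upper semi-continuity in Proposition \ref{prop:reg for ma-eq} together with the $S^{1}$-invariance of the envelope to pass to the limit along approaching orbits and guarantee that the supremum is attained.
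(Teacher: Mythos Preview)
The paper does not prove this lemma itself: it is quoted as a special case of Hisamoto's result \cite{hi} (see Remark \ref{rem:Lemma--infty norm}), which shows that the entire pushforward measure $(\dot\phi^{t})_{*}MA(\phi^{t})$ coincides with the limiting weight (Duistermaat--Heckman) measure of the test configuration; all $L^{p}$-norms, and in particular $p=\infty$, are then automatically independent of $\phi_{0}$. So your proposal is an attempt at a direct, independent argument for the $L^{\infty}$-case.

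Your identification of the asymptotic slope $\Phi(x):=\lim_{t\to\infty}(\phi^{t}(x)-\phi^{0}(x))/t$ as independent of $\phi_{0}$ is correct, and the convexity inequality $\dot\phi^{0+}(x)\le\Phi(x)$ is fine. The gap is in going back from $\Phi$ to the \emph{initial} derivative $\dot\phi^{0+}$. Convexity only gives $\sup_{X}\dot\phi^{0+}\le\sup_{X}\Phi=\lambda_{+}$; it says nothing in the direction $\inf_{X}\dot\phi^{0+}\ge\lambda_{-}$ (the inequality points the wrong way), and it does not force equality in the first. Your lower-bound step asserts that for $x$ whose orbit attracts to a fixed point $p$ one has $\phi^{t}(x)-\phi^{0}(x)=\lambda_{\pm}t+O(1)$, but this is again a large-$t$ statement, i.e.\ it computes $\Phi(x)$, not $\dot\phi^{0+}(x)$: a convex function $f$ with $f(0)=0$ and $f(t)=\lambda t+O(1)$ can have $f'(0+)<\lambda$ (e.g.\ $f(t)=\lambda t-1+e^{-t}$). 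The orbit $\rho(\tau)x$ approaches $p$ only as $\tau\to 0$, that is $t\to\infty$, so the fixed-point linearization carries no information about the small-$t$ behaviour of the envelope, which is precisely where $\dot\phi^{0+}$ lives and where the $\phi_{0}$-dependence sits. Closing the argument along your lines would require an additional input --- for instance, that $t\mapsto\sup_{X}(\phi^{t}-\phi^{0})$ is actually \emph{affine} and not merely convex with linear growth --- and establishing that is essentially the content of the cited result.
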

Now we can prove the following theorem using a slight variant of the
proof of Theorem \ref{thm:DF=00003Dding intro}; the result can be
seen as an analog of Donaldson's lower bound on the Calabi functional
\cite{do2},
\begin{thm}
Let $X$ be a Fano manifold. Then 
\[
\inf_{\omega\in\mathcal{K}(X)}\left\Vert 1-e^{h_{\omega}}\right\Vert _{L^{1}(X,\omega)}\geq\sup_{(\mathcal{X},\mathcal{L})}\frac{-DF(\mathcal{X},\mathcal{L})}{\left\Vert (\mathcal{X},\mathcal{L})\right\Vert _{\infty}},
\]
 where $(\mathcal{X},\mathcal{L})$ ranges over all test configurations
$(\mathcal{X},\mathcal{L})$ such that $\left\Vert (\mathcal{X},\mathcal{L})\right\Vert _{\infty}\neq0.$
Moreover, if equality holds and the infimum is attained at some $\omega$
and the supremum is attained at $(\mathcal{X},\mathcal{L})$ (with
$\mathcal{X}$ normal), then $(\mathcal{X},\mathcal{L})$ is isomorphic
to a product test configuration. In particular, 
\[
\inf_{\omega\in\mathcal{K}(X)}\int h_{\omega}e^{h_{\omega}}\frac{\omega^{n}}{V}\geq\frac{1}{2}\sup_{(\mathcal{X},\mathcal{L})}\left(\frac{DF(\mathcal{X},\mathcal{L})}{\left\Vert (\mathcal{X},\mathcal{L})\right\Vert _{\infty}}\right)^{2}
\]
where the sup ranges over all destabilizing $(\mathcal{X},\mathcal{L})$
(i.e. $DF(\mathcal{X},\mathcal{L})>0)$ with the same same necessary
conditions for equality as before. In particular, if $X$ is K-unstable
then both infimums above are strictly positive.\end{thm}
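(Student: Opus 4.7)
The plan is to apply Theorem~\ref{thm:df=00003Dding} along a weak geodesic ray emanating from a K\"ahler metric, and to combine it with convexity of the Ding functional and H\"older's inequality. Fix $\omega = dd^c\phi^0 \in \mathcal{K}(X)$ and a test configuration $(\mathcal{X},\mathcal{L})$ with $\|(\mathcal{X},\mathcal{L})\|_\infty \ne 0$, and let $\phi^t$ be the weak geodesic ray starting at $\phi^0$ and associated with $(\mathcal{X},\mathcal{L})$, as constructed in Prop~\ref{prop:reg for ma-eq}. Theorem~\ref{thm:df=00003Dding} gives $-DF(\mathcal{X},\mathcal{L}) = \lim_{t\to\infty}\tfrac{d}{dt}\mathcal{D}(\phi^t) + q$ with $q \geq 0$. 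Since $\mathcal{E}$ is affine along weak geodesics and $v_\phi$ is subharmonic in $\tau$ (Berndtsson, as used in Theorem~\ref{thm:positivity of v}), $t \mapsto \mathcal{D}(\phi^t)$ is convex, and so
\[
DF(\mathcal{X},\mathcal{L}) \leq -\lim_{t\to\infty}\frac{d}{dt}\mathcal{D}(\phi^t) \leq -\frac{d}{dt}\mathcal{D}(\phi^t)\bigg|_{t=0^+}.
\]

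The initial right derivative is computed directly from the first-variation formula~\ref{eq:variational prop of energy} for $\mathcal{E}$ together with the defining relation $e^{h_\omega}\,\omega^n/V = e^{-\phi^0}/\int e^{-\phi^0}$:
\[
-\frac{d}{dt}\mathcal{D}(\phi^t)\bigg|_{t=0^+} = \int_X \dot\phi^0\,(1 - e^{h_\omega})\,\frac{\omega^n}{V}.
\]
H\"older's inequality bounds this by $\|\dot\phi^0\|_{L^\infty(X)} \cdot \|1-e^{h_\omega}\|_{L^1(X,\omega)}$, and Lemma~\ref{lem:l infty norm} identifies $\|\dot\phi^0\|_{L^\infty(X)}$ with the intrinsic norm $\|(\mathcal{X},\mathcal{L})\|_\infty$ independently of the chosen base point $\phi^0$. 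Dividing by $\|(\mathcal{X},\mathcal{L})\|_\infty$ and passing to the sup/inf yields the first claimed inequality.

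If equality holds with extrema attained at $\omega$ and normal $(\mathcal{X},\mathcal{L})$, each step above must be an equality: $q = 0$, which by Theorem~\ref{thm:df=00003Dding} forces $\mathcal{X}_0$ generically reduced and $\mathcal{L}\cong -K_{\mathcal{X}/\C}$ on the regular locus; and $\mathcal{D}(\phi^t)$ is affine in $t$, which in particular forces $v_\phi(\tau)$ to be harmonic on $\Delta^*$. The biholomorphic trivialization argument at the end of the proof of Theorem~\ref{thm:positivity of v}, combined with generic reducedness of $\mathcal{X}_0$ and normality of $\mathcal{X}$, then identifies $(\mathcal{X},\mathcal{L})$ with a product test configuration.

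For the ``in particular'' statement, apply the Csisz\'ar--Kullback--Pinsker inequality to the probability measures $\nu := \omega^n/V$ and $\mu := e^{h_\omega}\,\omega^n/V = e^{-\phi^0}/\int e^{-\phi^0}$:
\[
\int_X h_\omega\,e^{h_\omega}\,\frac{\omega^n}{V} = \mathrm{KL}(\mu\,\|\,\nu) \geq \tfrac{1}{2}\|1-e^{h_\omega}\|_{L^1(X,\omega)}^2,
\]
and square the first inequality, restricted to destabilizing $(\mathcal{X},\mathcal{L})$ (where $DF > 0$ so both sides are non-negative). If $X$ is K-unstable, some destabilizing $(\mathcal{X},\mathcal{L})$ exists, and its norm $\|(\mathcal{X},\mathcal{L})\|_\infty$ is finite (by the Lipschitz bound in Prop~\ref{prop:reg for ma-eq}) and non-zero (else the configuration is a product), so both infima are strictly positive. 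The main technical point throughout is justifying the first-variation computation at $t = 0^+$ along a merely \emph{weak} geodesic; this is handled by the existence and boundedness of the right derivative of $\phi^t$ at $t=0$ from Prop~\ref{prop:reg for ma-eq} together with the continuity and positivity properties of the Ding metric established in Cor~\ref{cor:pos etc of ding metric}.
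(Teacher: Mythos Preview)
Your proof is correct and follows essentially the same route as the paper: convexity of the Ding functional along the weak geodesic ray combined with Theorem~\ref{thm:df=00003Dding} gives the chain of inequalities, H\"older and Lemma~\ref{lem:l infty norm} yield the first bound, the equality case is handled exactly as in the second proof of Theorem~\ref{thm:k-poly intro}, and Csisz\'ar--Kullback--Pinsker gives the entropy bound. One small point: in the K-unstable clause the paper deduces $\|(\mathcal{X},\mathcal{L})\|_\infty>0$ directly from the H\"older inequality $\|1-e^{h_\omega}\|_{L^1}\cdot\|(\mathcal{X},\mathcal{L})\|_\infty\geq DF>0$, rather than from your parenthetical ``else the configuration is a product'' (which is not established here), but the conclusion is the same.
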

\begin{proof}
Fix $(\mathcal{X},\mathcal{L})$ and $\phi^{0}\in\mathcal{H}(X,-K_{X})$
and denote by $\phi^{t}$ the corresponding (weak) geodesic. By convexity
of the Ding functional, combined with Theorem \ref{thm:DF=00003Dding intro}
(using that $q\geq0)$, we have 
\begin{equation}
\int_{X}\left(\frac{1}{V}(dd^{c}\phi_{0})^{n}-\frac{e^{-\phi_{0}}}{\int e^{-\phi_{0}}}\right)\frac{d\phi^{t}}{dt}\geq-\frac{d}{dt}\mathcal{D}(\phi^{t})_{t=0}\geq-\lim_{t\rightarrow\infty}\frac{d}{dt}\mathcal{D}(\phi^{t})\geq-DF(\mathcal{X},\mathcal{L}).\label{eq:proof of lower bound ricci potential}
\end{equation}
Applying Hölder's inequality with exponents $(q,p)=(1,\infty)$ thus
gives 
\begin{equation}
\left\Vert 1-e^{h_{\omega}}\right\Vert _{L^{1}(X,\omega)}\left\Vert \frac{d\phi^{t}}{dt}_{|t=0}\right\Vert _{L^{\infty}(X)}\geq-DF(\mathcal{X},\mathcal{L})\label{eq:holder}
\end{equation}
 and using the independence in the previous lemma then concludes the
proof of the first inequality of the Theorem. The second inequality
then follows immediately from the classical Csiszar-Kullback-Pinsker
inequality between the relative entropy and the total variation norm
\cite{c-k}. As for the equality case it follows, just as in the second
proof of Theorem \ref{thm:k-poly intro}, from the equality cases
in \ref{eq:proof of lower bound ricci potential}. Finally, if $X$
is $K-$unstable then there exists, by definition, a test configuration
such that $DF(\mathcal{X},\mathcal{L})>0$ and for any such test configuration
the inequality \ref{eq:holder} forces $\left\Vert (\mathcal{X},\mathcal{L})\right\Vert _{\infty}>0,$
which concludes the proof. 
\end{proof}
Recall that in the definition of a test configuration $(\mathcal{X},\mathcal{L})$
we have fixed an action $\rho$ on $\mathcal{L}$ and thus the norm
$\left\Vert (\mathcal{X},\mathcal{L})\right\Vert _{\infty}$ certainly
depends on $\rho.$ Indeed, twisting $\rho$ with a character of $\C^{*}$
shifts the tangent of $\phi^{t}$ with a constant. On the other hand,
$DF(\mathcal{X},\mathcal{L})$ is independent of such a twist and
hence the previous theorem still holds if we replace $\left\Vert (\mathcal{X},\mathcal{L})\right\Vert _{\infty}$
with its (smaller) normalized version obtained by replacing the $L^{\infty}(X)-$norm
in the definition \ref{eq:def of l infty norm of test} with the quotient
norm on the quotient space $L^{\infty}(X)/\R.$
\begin{rem}
\label{rem:Lemma--infty norm}As pointed out above Lemma \ref{lem:l infty norm}
is a special case of a general result of Hisamoto \cite{hi}, saying
that the measure $(\frac{d\phi^{t}}{dt})_{*}MA(\phi^{t})$ on $\R$
only depends on the test configuration $(\mathcal{X},\mathcal{L})$
and moreover is equal to the limiting normalized weight measures for
the $\C^{*}-$action, as conjectured by Witt-Nyström \cite{n}, who
settled the case of product test configurations. In particular, by
\cite{hi} all the $L^{p}-$norms $\left\Vert (\mathcal{X},\mathcal{L})\right\Vert _{p}$
of $\frac{d\phi^{t}}{dt}$ (integrating against $MA(\phi^{t}))$ only
depend on $(\mathcal{X},\mathcal{L})$ and coincide with the limits
of the corresponding $l^{p}-$norms of the weights $\{\lambda_{i}^{(k)}\}.$
In particular, letting $p\rightarrow\infty$ gives Lemma \ref{lem:l infty norm}.
Using this the proof of the previous theorem shows that the theorem
holds, more generally, when $\left\Vert (\mathcal{X},\mathcal{L})\right\Vert _{\infty}$
is replaced by $\left\Vert (\mathcal{X},\mathcal{L})\right\Vert _{p}$
for $p\in[1,\infty]$ and the $L^{1}-$norm with the corresponding
$L^{q}-$norm, where $q$ is the Young (Hölder) dual of $p.$ In fact,
as shown in \cite{b-h-n} a similar argument can be used to give a
new proof and extend to general $L^{p}-$norms Donaldson's lower bound
on the Calabi functional \cite{do2}. 
\end{rem}
Next, we recall that Perelman's W-functional \cite{pe}, when restricted
to the space all pairs $(\omega,f)$ such that $\omega$ as in the
space $\mathcal{K}(X)$ of all Kähler metrics $\omega$ in $c_{1}(X)$
and $f$ is a smooth function such  that $e^{-f}\omega^{n}$ has unit
mass, is given by 

\[
W(\omega,f):=\int_{X}(R_{\omega}+|\nabla f|^{2}+f)e^{-f}\omega^{n},
\]
 where $R_{\omega}$ is the scalar curvature of $\omega$ normalized
so that $\int R_{\omega}\omega=n$ for any $\omega\in c_{1}(X)$ (as
usual in the Kähler setting where the volume of the metrics is fixed
we have set Perelman's parameter $\tau$ to be equal to $1/2$ and
as in \cite{pe,ti-zhu,t-z--,he} we have subtraced the universal constant
$2n$ from Perelman's original definition). Then Perelman's $\lambda-$entropy
functional on $\mathcal{K}(X)$ is defined as 

\[
\lambda(\omega)=\inf_{f\in\mathcal{C}^{\infty}(X):\,\int e^{-f}\omega^{n}=1}W(\omega,f)
\]
 \cite{pe,ti-zhu,t-z--,he} and in particular $\lambda(\omega)\leq W(\omega,0)=nV.$ 
\begin{cor}
\label{cor:bound on lambda f}Let $X$ be an $n-$dimension Fano manifold.
Then \textup{
\[
\sup_{\omega\in\mathcal{K}(X)}\lambda(\omega)\leq nV-\frac{1}{2}\sup_{(\mathcal{X},\mathcal{L})}\left(\frac{DF(\mathcal{X},\mathcal{L})}{\left\Vert (\mathcal{X},\mathcal{L})\right\Vert _{\infty}}\right)^{2}
\]
}where $V=c_{1}(X)^{n}$ and $(\mathcal{X},\mathcal{L})$ ranges of
all destabilizing test configurations for $(X,-K_{X}).$ In particular,
if $X$ is K-unstable then $\lambda\leq nV-\epsilon$ for some positive
number $\epsilon.$\end{cor}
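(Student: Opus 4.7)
The plan is to reduce the corollary to the previous theorem by means of a standard Perelman-type inequality. For any $\omega\in\mathcal{K}(X)$, I will plug the test function $f=-h_{\omega}$ into Perelman's $W$-functional; this is admissible because the normalization $\int e^{h_{\omega}}\omega^{n}/V=1$ of the Ricci potential precisely matches the constraint appearing in the definition of $\lambda(\omega)$. A short integration by parts, using the identity $R=n+\Delta h_{\omega}$ for $\omega\in c_{1}(X)$, shows that the scalar-curvature term and the gradient term in $W(\omega,-h_{\omega})$ telescope to give $nV$, leaving only the linear term in $f$. The net outcome is the pointwise bound
\[
\lambda(\omega)\;\leq\; W(\omega,-h_{\omega})\;=\;nV-\int_{X}h_{\omega}e^{h_{\omega}}\frac{\omega^{n}}{V},
\]
which is entirely standard in the K\"ahler--Ricci-flow literature (compare \cite{ti-zhu,he}).

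Once this pointwise inequality is in hand, I will take the supremum over $\omega$ on the left and the infimum over $\omega$ on the right to obtain
\[
\sup_{\omega\in\mathcal{K}(X)}\lambda(\omega)\;\leq\;nV-\inf_{\omega\in\mathcal{K}(X)}\int_{X}h_{\omega}e^{h_{\omega}}\frac{\omega^{n}}{V},
\]
and then invoke the previous theorem to bound the remaining infimum from below by $\tfrac{1}{2}\sup_{(\mathcal{X},\mathcal{L})}\bigl(DF(\mathcal{X},\mathcal{L})/\|(\mathcal{X},\mathcal{L})\|_{\infty}\bigr)^{2}$, where $(\mathcal{X},\mathcal{L})$ runs over destabilizing test configurations. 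Substituting yields the desired inequality directly.

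For the final assertion, if $X$ is $K$-unstable then by definition there exists a test configuration with $DF(\mathcal{X},\mathcal{L})>0$; the proof of the previous theorem (via H\"older's inequality applied in the convexity argument for the Ding functional) already forces $\|(\mathcal{X},\mathcal{L})\|_{\infty}>0$ for any such destabilizing configuration, so the right-hand supremum is a strictly positive number $\epsilon$, giving $\sup\lambda\leq nV-\epsilon$.

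The only non-routine ingredient is the Perelman-type evaluation of $W(\omega,-h_{\omega})$, and that is a textbook computation; all the genuinely hard analytic work -- positivity of the generalized Ding metric, control of Lelong numbers, the identification with $DF$ -- has already been carried out in the previous theorem, so no further obstacle is anticipated.
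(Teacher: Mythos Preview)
Your proposal is correct and follows essentially the same route as the paper: one plugs $f=-h_{\omega}$ into Perelman's $W$-functional, uses one integration by parts together with $R=n+\Delta h_{\omega}$ to obtain $\lambda(\omega)\leq nV-\int h_{\omega}e^{h_{\omega}}\omega^{n}/V$, and then invokes the entropy bound from the preceding theorem. The paper's proof is the same two-line argument, citing \cite{he} for the Perelman-type inequality.
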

\begin{proof}
As explained in \cite{he} $\lambda(\omega)+\int h_{\omega}e^{h_{\omega}}\omega^{n}\leq nV$
(using $W(\omega,f)\leq W(\omega,-h_{\omega})$ and one integration
by parts) and hence the corollary follows immediately from the previous
theorem.\end{proof}
\begin{rem}
The previous inequality was inspired by the result in \cite{t-z--}
and its extension to general non-invariant Kähler metrics in \cite{he},
saying that 
\[
\sup_{\omega\in\mathcal{K}(X)}\lambda(\omega)\leq nV-\sup_{\xi\in\mbox{Lie}G}H(\xi),
\]
with equality if $X$ admits a Kähler-Ricci soliton, where $\mbox{Lie}G$
is the Lie algebra of a maximal compact subgroup in $\mbox{Aut\ensuremath{_{0}(X)}and }$$H$
is a certain concave functional on $\mbox{Lie}G,$ defined in \cite{t-z--}.
The proof in \cite{he} was based on the convexity of the functional
$v_{\phi^{t}},$ while we here use the convexity of the whole Ding
functional.
\end{rem}

\subsection{\label{sub:The-logarithmic-setting}The log Fano setting}

Let us briefly recall the more general setting of Kähler-Einstein
metrics on log Fano varieties \cite{bbegz} and log K-stability \cite{do-3,li1,o-s}.
In a nutshell, this setting is obtained from the previous one by replacing
the canonical line bundle $K_{X}$ with the \emph{log canonical line
bundle} $K_{(X,D)}:=K_{X}+D$ of a given\emph{ }log pair $(X,D).$
For example, $(X,D)$ is said to be a\emph{ (weak) log Fano variety}
if $-K_{(X,D)}$ is ample (nef and big). A \emph{log Kähler-Einstein
metric} $\omega$ associated to $(X,D)$ is, by definition, a current
$\omega$ in $c_{1}(-K_{(X.D}),$ defining a Kähler metric on $X_{reg}-D,$
with locally bounded potentials on $X$ and such that 
\[
\mbox{Ric }\ensuremath{\omega-[D]=\omega,}
\]
holds in the sense of currents, where $[D]$ denotes the current of
integration defined by $D.$ Equivalently \cite{bbegz}, this means
that $\omega$ is the curvature current of a locally bounded metric
$\phi_{KE}$ on $-K_{(X,D)}$ satisfying 
\[
(dd^{c}\phi_{KE})^{n}=Ce^{-(\phi_{KE}+\log|s_{D}|^{2})}
\]
 (for some constant $C)$ in the sense of pluripotential theory, where
we recall that $e^{-(\phi+\log|s_{D}|^{2})}$ denotes the measure
associated to a metric $\phi$ on $-K_{(X,D)};$ see section \ref{sub:canonical measures}.
The definitions are compatible with log resolutions (as in formula\ref{eq:pull-back of log can}).
Hence if $(X,D)$ is a weak log Fano variety, then so is $(X',D')$
and $\phi_{KE}$ is a log Kähler-Einstein metric for $(X,D)$ iff
$p^{*}\phi_{KE}$ is a log Kähler-Einstein metric for $(X',D').$ 
\begin{example}
\label{ex:edge-cone}If $(X,D)$ is log smooth and klt, i.e. $X$
is smooth and $D$ has simple normal crossings with coefficents $<1,$
it follows the regularity results in \cite{cgh,g-p} give that any
log Kähler-Einstein metric for $(X,D)$ has edge-cone singularities
along $D.$ Moreover, by \cite{j-m-r}, if the log Mabuchi functional
(or the log Ding functional) for $(X,D)$ is proper the metric even
admits a complete polyhomogenous expansion along $D$ (this is shown
in \cite{j-m-r} when $D$ has a singular component and the general
case is anounced in \cite{j-m-r}). However, one of the main points
of the approach in the present paper is that it only relies on very
weak regularity properties of the metric (the local boundedness of
$\phi_{KE})$ and that it is independent of any properness assumption.
\end{example}
The notion of K-stability has also been generalized to the log setting
(see \cite{do-3,li1,o-s}). Briefly, a test configuration for a log
Fano variety $(X,D)$ consists of a test configuration $(\mathcal{X},\mathcal{L})$
for $(X,L)$ where $L=-K_{(X,D)}.$ The $\C^{*}-$action, applied
to the support of $D$ in $\mathcal{X}_{1}$$,$ induces a $\C^{*}-$invariant
divisor $\mathcal{D}^{*}$ in $\mathcal{X}^{*}$ and we denote by
$\mathcal{D}$ its closure in $\mathcal{X}.$ The corresponding\emph{
log Donaldson-Futaki invariant} $DF(\mathcal{X},\mathcal{L};\mathcal{D})$
was defined in \cite{li1} (by imposing linearity it is enough to
consider the case when $D$ is reduced and irreducible). A direct
calculation reveals that, up to normalization, the definition in \cite{li1}
is equivalent to replacing the relative canonical divisor $K$ in
the intersection theoretic formula \ref{eq:df as intersection} with
the relative log canonical divisor $K+\mathcal{D},$ defined as a
Weil divisor (compare \cite{o-s}): 
\begin{equation}
DF(\mathcal{X},\mathcal{L};\mathcal{D})=\mu\mathcal{\bar{\mathcal{L}}}\cdot\mathcal{\bar{\mathcal{L}}}\cdots\mathcal{\bar{\mathcal{L}}}+(n+1)(K+\mathcal{D})\cdot\cdot\mathcal{\mathcal{\bar{\mathcal{L}}}}\cdots\mathcal{\mathcal{\bar{\mathcal{L}}}},,\label{eq:log df as intersection-1}
\end{equation}
where now $\mu=n(-(K_{X}+D))\cdot L^{n-1}/L^{n}.$ We can hence take
the latter formula as the definition of the invariant $DF(\mathcal{X},\mathcal{L};\mathcal{D}).$
Finally, $(X,D)$ is said to be \emph{log K-polystable} if, for any
test configuration, $DF(\mathcal{X},\mathcal{L};\mathcal{D})\geq0$
with equality iff the test configuration is equivariantly isomorphic
to a product test configuration. 
\begin{thm}
\label{thm:log k stab}Let $(X,D)$ be a log Fano variety admitting
a log Kähler-Einstein metric, where $D$ is an effective $\Q-$divisor
on $X.$ Then $(X,D)$ is log K-polystable.\end{thm}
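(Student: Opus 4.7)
The plan is to reduce to the proof of Theorem \ref{thm:k-poly intro}, with $-K_X$ systematically replaced by the log anti-canonical bundle $-K_{(X,D)}=-(K_X+D)$ and the Ding functional replaced by its log analogue. The log Ding functional on metrics $\phi$ on $-K_{(X,D)}$ is defined by
\[
\mathcal{D}_{(X,D)}(\phi,\psi) := -\frac{1}{L^n}\mathcal{E}(\phi,\psi) - \log\int_X e^{-(\phi+\log|s_D|^2)} + \log\int_X e^{-(\psi+\log|s_D|^2)},
\]
and its critical points are precisely the log Kähler-Einstein metrics, as recalled in Section \ref{sub:The-logarithmic-setting}. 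By the log generalization of Berndtsson's convexity theorem established in \cite{bbegz}, the function $t\mapsto\mathcal{D}_{(X,D)}(\phi^t)$ is convex along any weak geodesic $\phi^t$ of locally bounded positively curved metrics, and the affine case is characterized by pullback under a biholomorphism as in Theorem \ref{thm:positivity of v}.

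The main step is to establish the log analogue of Theorem \ref{thm:df=00003Dding}: for every test configuration $(\mathcal{X},\mathcal{L})$ for $(X,-K_{(X,D)})$ with $\mathcal{D}$ the closure of the $\C^*$-orbit of $D$ in $\mathcal{X}$,
\[
-DF(\mathcal{X},\mathcal{L};\mathcal{D}) \;=\; \lim_{t\to\infty}\frac{d}{dt}\mathcal{D}_{(X,D)}(\phi^t)\;+\;q, \qquad q\geq 0,
\]
for $\phi^t=\rho(\tau)^*\phi_\tau$ the ray induced by any locally bounded positively curved metric $\phi$ on $\mathcal{L}$. Starting from the intersection-theoretic formula \eqref{eq:log df as intersection-1}, I would fix an equivariant log resolution $p\colon\mathcal{X}'\to\mathcal{X}$ of $(\mathcal{X},\mathcal{X}_0+\mathcal{D})$ and write $p^*(K_{\mathcal{X}/\C}+\mathcal{D})=K_{\mathcal{X}'/\C}+\mathcal{D}'-E'$ with $\mathcal{D}',E'$ effective with simple normal crossings. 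The generalized log Ding line bundle
\[
\delta' \;:=\; -\frac{1}{L^n(n+1)}\langle\mathcal{L}',\ldots,\mathcal{L}'\rangle + \pi'_*\bigl(\mathcal{L}'+K_{\mathcal{X}'/\C}+\mathcal{D}'\bigr)
\]
is equipped with the Deligne metric twisted by the log $L^2$-metric computed against $e^{-(\phi'+\log|s_{\mathcal{D}'}|^2)}$, whose curvature positivity is precisely the log version of Berndtsson-Paun \cite{be-p,be-p2}. Decomposing $\eta'$ as $\delta'$ plus an effective correction (compare formula \eqref{eq:eta as sum of delta' + correction}) and applying Lemma \ref{lem:Hilbert-Mumford} to the weight at $\tau=0$ yields the displayed formula together with the analogue of the explicit expression \eqref{eq:formula for q in thm} for $q$ in terms of the log discrepancies along the reduced components of $\mathcal{X}'_0$.

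The conclusion then follows exactly as in Section \ref{sub:An-alternative-proof}. Take $\phi^0$ to be the given log Kähler-Einstein metric on $-K_{(X,D)}$ and $\phi^t$ the weak geodesic emanating from it, associated to the test configuration. Convexity of $\mathcal{D}_{(X,D)}(\phi^t)$ and the critical-point property at $t=0$ give $\lim_{t\to\infty}\tfrac{d}{dt}\mathcal{D}_{(X,D)}(\phi^t)\geq 0$, hence $DF(\mathcal{X},\mathcal{L};\mathcal{D})\leq 0$. If equality holds then both $q=0$ and $\mathcal{D}_{(X,D)}(\phi^t)$ is affine, and the log analogue of the rigidity statement in Theorem \ref{thm:positivity of v}, obtained by the same pull-back/biholomorphism argument applied to invariant sections of $-m(K_X+D)$ rather than $-mK_X$, forces $(\mathcal{X},\mathcal{L};\mathcal{D})$ to be a product test configuration.

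The main obstacle is the log version of the vanishing of the Lelong number at $\tau=0$, i.e.\ the log analogues of Proposition \ref{prop:lelong of direct image} and Corollary \ref{cor:vanshing of lelong for gen ding}. Concretely, one must verify that the Ohsawa-Takegoshi extension argument and Kollár's inversion of adjunction go through for the twisted measure $e^{-(\phi+\log|s_{\mathcal{D}}|^2)}$: it suffices to know that the pair $(\mathcal{X},(1-\delta)\mathcal{X}_0+\mathcal{D})$ is klt for some $\delta>0$, which follows from klt-ness of $(X,D)$ on the generic fiber (hence on $\mathcal{X}^*$) combined with inversion of adjunction applied to the Cartier divisor $\mathcal{X}_0$. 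Modulo this verification, the argument is a direct log-translation of the proof carried out in Sections \ref{sec:The-proof-of} and \ref{sec:Singularity-structure-of}.
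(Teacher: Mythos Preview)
Your approach is essentially the same as the paper's: the paper states explicitly that the proof proceeds \emph{mutatis mutandis} from the case $D=0$, using the log convexity of $v_\phi$ from \cite{bbegz} (which requires $D$ effective), and that the alternative proof in Section \ref{sub:An-alternative-proof} extends immediately once one takes a log resolution of $\mathcal{D}+\mathcal{X}_0$. Your outline reproduces this faithfully.

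One correction to your final paragraph. You identify the ``main obstacle'' as the log vanishing of the Lelong number at $\tau=0$ and claim that $(\mathcal{X},(1-\delta)\mathcal{X}_0+\mathcal{D})$ is klt by inversion of adjunction from the \emph{generic} fiber. That is not how inversion of adjunction works: one needs klt-ness of the \emph{central} fiber $(\mathcal{X}_0,\mathrm{Diff})$ to deduce klt-ness of the ambient pair near $\mathcal{X}_0$ (this is exactly the hypothesis ``special test configuration'' in Corollary \ref{cor:vanshing of lelong for gen ding}). More importantly, you do not need this at all for the route you are taking. The alternative proof in Section \ref{sub:An-alternative-proof} only uses $q\geq 0$, which follows from positivity of the log $L^2$-metric on the direct image (the log Berndtsson--P\u{a}un result), together with the implication ``$q=0\Rightarrow\mathcal{X}_0$ generically reduced'' from the explicit formula \eqref{eq:formula for q in thm}. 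The Lelong-number \emph{vanishing} (i.e.\ $q=0$ a priori) is only relevant for the first proof via special test configurations, where it is supplied by the hypothesis on $\mathcal{X}_0$, not by the generic fiber. So drop that last paragraph, or rephrase it as the (routine) log extension of the second bullet of Proposition \ref{prop:lelong of direct image}, which is a direct local computation and needs neither Ohsawa--Takegoshi nor inversion of adjunction.
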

\begin{proof}
The proof given for Theorem \ref{thm:k-poly intro} actually proves
Theorem \ref{thm:log k stab} as well, since, in the case when $X$
is singular, it uses a log resolution to replace $\mathcal{X}$ with
a pair $(\mathcal{X}'^{*},D^{*})$ and then uses the closure of the
divisor $D^{*}$ in $\mathcal{X}'.$ 
\end{proof}
The theorem thus confirms one direction of the log version of the
Yau-Tian-Donaldson conjecture formulated in \cite{li1}.

\section{\label{sec:Outlook-on-the}Outlook on the existence problem for Kähler-Einstein
metrics on $\Q-$Fano varieties}

Very recently the existence of a Kähler-Einstein metric on a K-polystable
Fano manifold $X$ was finally settled by Donaldson-Chen-Sun \cite{c-d-s}.
In this section we will briefly discuss how some of the results in
the present paper may be useful when considering the corresponding
existence problem on a singular Fano variety. We will follow Tian's
original program, which is based on Aubin's continuity method (see
the outline in \cite{ti2} and references therein), but using the
Ding functional as a replacement for the Mabuchi functional used in
\cite{ti2}. Howeer, it should be emphasized that the recent results
in \cite{c-d-s}  are based on a modification of Tian's program introduced
by Donaldson which involves Kähler-Einstein metrics with conical singularities
(obtained by replacing the smooth form $\eta$ in Aubin's equation
\ref{eq:aubins eq} below, with the current defined by a suitable
anti-canonical $\Q-$divisor $D).$ One motivation for using Aubin's
original method here is that Tian's conjecture on the partial $C^{0}-$estimate
(see H1 below) has now been proved along Aubin's continuity method
when $X$ is smooth (see \cite{sz} which builds on\cite{c-d-s})
and one can thus hope that it will eventually also be established
on singular Fano varieties. 

The main connection to the present paper stems from the following
immediate consequence of Theorem \ref{thm:df=00003Dding} applied
to a special test configuration, which, as will be explained below,
together with two the general hypotheses H1 and H2 gives the existence
of a Kähler-Einstein metric on a K-stable Fano manifold.
\begin{cor}
\label{cor:mab diverges}Let $X$ be a Fano variety with log terminal
singularities and $\mathcal{X}$ a special test configuration for
$X$ such that $DF(\mathcal{X})>0.$ Fix a smooth and positively curved
metric $\phi$ on $-K_{\mathcal{X}/\C}$ (more generally, local boundedness
is enough) and set $\phi^{t}:=\rho^{*}\phi_{\tau}.$ Then the Ding
functional $\mathcal{D}$ and the Mabuchi functional $\mathcal{M}$
both tend to infinity along $\phi^{t},$ as $t\rightarrow\infty.$\end{cor}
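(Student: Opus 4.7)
The plan is to deduce the corollary directly from Theorem \ref{thm:df=00003Dding} applied to the special test configuration $\mathcal{X}$, together with the convexity of the Ding functional along the curve $\phi^t$ obtained from a smooth positively curved metric on $-K_{\mathcal{X}/\C}$.

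First, since $\mathcal{X}$ is special (central fiber normal with klt singularities), the second bullet of Theorem \ref{thm:df=00003Dding} gives $q=0$, so
\[
\lim_{t\to\infty}\frac{d}{dt}\mathcal{D}(\phi^t) = -DF(\mathcal{X}) > 0
\]
by the assumption $DF(\mathcal{X})<0$. Second, because the given metric $\phi$ on $-K_{\mathcal{X}/\C}$ is smooth and positively curved (in particular locally bounded and positively curved), Corollary \ref{cor:pos etc of ding metric} applies and gives that the associated Ding metric $\Phi$ on $\eta$ has positive curvature current on $\Delta$. Restricting to the $S^1$-invariant real variable $t=-\log|\tau|$, this means precisely that $t\mapsto\mathcal{D}(\phi^t)$ is convex (up to an irrelevant additive constant).

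A convex function of one real variable whose right derivative at infinity is a strictly positive number must tend to $+\infty$; indeed, convexity gives $\mathcal{D}(\phi^t)\geq \mathcal{D}(\phi^0)+\bigl(\lim_{s\to\infty}\tfrac{d}{ds}\mathcal{D}(\phi^s)\bigr)\cdot t$ for all $t\geq 0$. Applying this with slope $-DF(\mathcal{X})>0$ yields $\mathcal{D}(\phi^t)\to+\infty$ as $t\to\infty$.

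Finally, for the Mabuchi functional one uses the well-known Chen--Tian type decomposition
\[
\mathcal{M}(\phi) = \mathcal{D}(\phi) + H\!\left(\tfrac{1}{V}(dd^c\phi)^n\,\Big|\,\tfrac{e^{-\phi}}{\int e^{-\phi}}\right)+\mbox{const},
\]
where $H$ denotes the relative entropy, which is non-negative by Jensen's inequality. This gives $\mathcal{M}(\phi^t)\geq \mathcal{D}(\phi^t)-C$, so divergence of the Ding functional forces divergence of $\mathcal{M}$. No step is really hard here: the only substantive input is Theorem \ref{thm:df=00003Dding} together with the positivity of the Ding metric; the remaining arguments are routine convexity and the standard entropy comparison between the two functionals in the Fano case.
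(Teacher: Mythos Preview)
Your approach is exactly the paper's: apply Theorem \ref{thm:df=00003Dding} (with $q=0$ since the test configuration is special) to get that the asymptotic slope of $\mathcal{D}(\phi^{t})$ equals $-DF(\mathcal{X})>0$, deduce that $\mathcal{D}(\phi^{t})\to\infty$, and then use the comparison $\mathcal{D}\leq\mathcal{M}$ (the paper cites this as the Bando--Mabuchi inequality; your entropy decomposition is the same statement).

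One slip: the displayed inequality you use to pass from ``positive asymptotic slope'' to divergence is in the wrong direction. For a convex function $f$ on $[0,\infty)$ whose (increasing) derivative tends to $c$, one has $f(t)\leq f(0)+ct$, not $\geq$. The correct argument is the trivial one: since $\tfrac{d}{dt}\mathcal{D}(\phi^{t})\to c>0$, there exists $T$ with $\tfrac{d}{dt}\mathcal{D}(\phi^{t})\geq c/2$ for $t\geq T$, hence $\mathcal{D}(\phi^{t})\geq \mathcal{D}(\phi^{T})+\tfrac{c}{2}(t-T)\to\infty$. (Equivalently, for convex $f$ one has $\lim_{t\to\infty}f(t)/t=\lim_{t\to\infty}f'(t)$.) With this fix the proof is fine.
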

\begin{proof}
By Theorem \ref{thm:df=00003Dding} we have that $\lim_{t\rightarrow\infty}\frac{d}{dt}\mathcal{D}(\phi^{t})>0$
and hence $\mathcal{D}(\phi^{t})\rightarrow\infty$ which, by the
well-known inequality $\mathcal{M}\geq\mathcal{D},$ concludes the
proof.
\end{proof}
We recall that the Mabuchi functional $\mathcal{M}$ admits a natural
extension to the space $\mathcal{H}_{b}(-K_{X})$ taking values in
$]-\infty,\infty]$ such that $\mathcal{M}(\phi)$ is finite precisely
when the measure $MA(\phi)$ has finite pluricomplex energy and relative
entropy \cite{bbegz}. In particular, by the regularity results in
\cite{p-s1b}, $\mathcal{M}(\phi^{t})$ is finite, for any fixed $t,$
if the initial metric $\phi_{0}$ is smooth and hence under the assumption
in the previous corollary $\mathcal{M}(\phi^{t})\rightarrow\infty$
tends to infinity as $t\rightarrow\infty.$ See \cite{p-s,ch} for
related results in the case when the total space $\mathcal{X}$ is
assumed smooth. 

After recalling Tian's program in the smooth setting with an eye towards
the singular case we will comment on further complications arising
when considering the existence problem on general $\Q$- Fano varieties.

\subsection{The case of a smooth Fano variety $X$}

The starting point of Tian's program is the continuity equation 
\begin{equation}
\mbox{Ric \ensuremath{\omega_{t}=t\omega_{t}+(1-t)\eta,}}\label{eq:aubins eq}
\end{equation}
 where $\omega_{0}$ is a given Kähler metric of positive Ricci curvature
$\eta$ and $t\in[0,1]$ is a fixed parameter. Let $I$ be the set
of all $t$ such that a solution $\omega_{t}$ exists. As shown by
Aubin $I\cap[0,1[$ is open and non-empty and hence to prove the existence
of a Kähler-Einstein metrics, i.e. that $1\in I,$ it is enough to
show that $I$ is closed. More precisely, denoting by $T$ the boundary
of $I$ and taking $t_{i}\rightarrow T$ we can write $\omega_{t_{i}}=dd^{c}\phi_{t_{i}}$
for suitably normalized metrics $\phi_{t_{i}}$ on $-K_{X}$ (e.g.
satisfying $\sup_{X}(\phi_{t_{i}}-\phi_{0})=0)$ and to show that
$I$ is closed it is enough to establish the following $C^{0}-$estimate:
\begin{equation}
\sup_{X}\left|\phi_{t_{i}}-\phi_{0}\right|\leq C\label{eq:aubin c0 estimate}
\end{equation}
(then the higher order estimates follow using the Aubin-Yau $C^{2}-$estimate,
Evans-Krylov theory and elliptic boot strapping). Before continuing
we recall that the following properties hold along the continuity
path \ref{eq:aubins eq} for $t\geq t_{0}$ (for a fixed $t_{0}\in I):$
\begin{equation}
(i)\,\mbox{Ric \ensuremath{\omega_{t}\geq t_{0}\omega_{t},\,\,\,\,(ii)\,\,\,\mathcal{M}(\phi_{t})\leq C_{0},}}\label{eq:prop along cont path}
\end{equation}
where the first property follows immediately from the fact that $\eta\geq0$
and the second one from the well-known fact that $\mathcal{M}(\phi_{t})$
is decreasing in $t.$

In order to relate the desired $C^{0}-$estimate \ref{eq:aubin c0 estimate}
to algebraic properties of $X$ Tian proposed the following conjecture
stated in terms of the Bergman function $\rho_{\omega}^{(k)}(x),$
at level $k,$ associated to a Kähler metric $\omega$ on $X:$ 
\[
\mbox{\ensuremath{\rho_{\omega}^{(k)}(x)=\sum_{i=0}^{N_{k}}|s_{i}^{(\phi)}|^{2}e^{-k\phi},}}
\]
where $\phi$ is any metric on $-K_{X}$ with curvature form $\omega$
and $\{s_{i}^{(\phi)}\}$ is any base in $H^{0}(X,-kK_{X})$ which
is orthonormal with respect to the $L^{2}-$norm $\left\Vert \cdot\right\Vert _{k\phi}$
on $H^{0}(X,-kK_{X})$ determined by $\phi,$ i.e. $\left\Vert s\right\Vert _{k\phi}^{2}=\int_{X}|s|^{2}e^{-k\phi}\omega^{n}.$
\begin{itemize}
\item \textbf{(H1)} \label{(Tian's-partial-estimate).}(Tian's partial $C^{0}-$estimate).
Given $t_{0}\in]0,1],$ let $\mathcal{K}(X,t_{0})$ be the space of
all Kähler metrics $\omega$ in $c_{1}(X)$ such that Ric$\omega\geq t_{0}\omega.$
Then there exists a $k>0$ and $\delta>0$ such that $kL$ is very
ample and for any $\omega\in\mathcal{K}(X,t_{0}),$ 
\[
\inf_{X}\mbox{\ensuremath{\rho_{\omega}^{(k)}(x)\geq\delta}}
\]

\end{itemize}
(more precisely, the conjecture says that $k$ can be chosen arbitrarily
large). If the previous conjecture holds then, as follows immediately
from the definition of $\rho_{\omega}^{(k)},$ the desired $C^{0}-$estimate
holds \ref{eq:aubin c0 estimate} iff 
\begin{equation}
\sup_{X}\left|\phi_{t_{i}}^{(k)}-\phi_{0}\right|\leq C\label{eq:tians partial c0 estimate}
\end{equation}
where now $\phi_{t_{i}}^{(k)}$ is the Bergman metric at level $k$
determined by $\phi_{t_{j}},$ i.e. $\phi_{t_{j}}^{(k)}=\frac{1}{k}\log\sum_{i=0}^{N_{k}}|s_{i}^{(\phi_{t_{j}})}|^{2}.$
In other words: $\phi_{t_{i}}^{(k)}$ is the scaled pull-back of the
Fubini-Study metric $\phi_{FS}$ on $\mathcal{O}(1)\rightarrow\P^{N_{k}}$
under the Kodaira map $F_{j}$ determined by $\phi_{t_{j}}:$ 
\[
F_{j}:\, X\rightarrow\P^{N_{k}},\,\,\,\,\,\phi_{t_{i}}^{(k)}=F_{j}^{*}\phi_{FS}/k,\,\,\,\, F_{j}(X):=V_{j}
\]
 i.e. $F_{i}(x)=[s_{0}(x):s_{1}(x):\cdots s_{N_{k}}(x)],$ where now
$(s_{i})$ is a fixed base, which is orthonormal with respect to the
$L^{2}-$norm determined by $\phi_{t_{j}}$ (strictly speaking, due
to the choice of base $V_{i}$ is only determined modulo action of
the the unitary group $U(N_{k}+1),$ but since this group is compact
this fact will be immaterial in the following). After passing to a
subsequence we may assume that the projective subvariety $V_{j}:=F_{j}(X)\subset\P^{N_{k}},$
converges, in the sense of cycles, to an algebraic cycle $V_{\infty}$
in $\P^{N_{k}}.$ It was indicated by Tian \cite{ti2} that the validity
of the previous conjecture would imply that the cycle $V_{\infty}$
is reduced, irreducible and even defines a\emph{ normal} variety.
More precisely, we will make the following 
\begin{itemize}
\item \textbf{(H2)} $V_{\infty}$ is normal with log terminal singularities
and there is a one parameter subgroup $\rho:\C^{*}\rightarrow GL(N_{k}+1,\C)$
such that 
\[
\sup_{X}\left|\phi_{t_{j}}^{(k)}-\rho(\tau_{i})^{*}\phi_{FS}\right|\leq C
\]
where $\rho(\tau_{i})V_{0}$ also converges (in the corresponding
Hilbert scheme) to the normal variety $V_{\infty}$ 
\end{itemize}
Then, by standard properties of the Hilbert scheme $\rho$ determines
a (special) test configuration $(\mathcal{X},\mathcal{L})$ with central
fiber $V_{\infty}.$ In fact, as explained in \cite{c-d-s} the existence
of $\rho$ in the case of Donaldson's contintuity method follows from
the reductivity of the automorphism group of $(V_{\infty},D_{\infty})$
established in \cite{c-d-s}, where $D_{\infty}$ is a divisor on
$V_{\infty}$ induced by $\eta=[D].$ Now, assuming that $X$ is K-stable
(for simplicity we consider the case when $X$ admits no non-trivial
holomorphic vector fields, but the general argument is similar) we
have that $DF(\mathcal{X},\mathcal{L})\geq0$ with equality iff $(\mathcal{X},\mathcal{L})$
is equivariantly isomorphic to a product test-configuration (recall
that the total space $\mathcal{X}$ here is automatically normal and
even $\Q-$Gorenstein, by Lemma \ref{lem:char of special test}).
In the latter case, $\rho(\tau)^{*}\phi_{FS}-\phi$ is trivially bounded
and hence the desired $C^{0}-$estimate \ref{eq:aubin c0 estimate}
then holds, showing that $X$ indeed admits a Kähler-Einstein metric.
The main issue is thus to exclude the case of $DF(\mathcal{X},\mathcal{L})>0$
and this is where Cor \ref{cor:mab diverges} enters into the picture.
Thus, assuming the validity of H1 and H2 above we deduce from Cor
\ref{cor:mab diverges} (with $\phi$ the restriction of the Fubini-Study
metric) that if the second alternative $DF(\mathcal{X},\mathcal{L})>0$
holds, then the Ding functional $\mathcal{D}$ tends to infinity along
$\rho(\tau_{i})^{*}\phi_{FS}$ and hence it is unbounded from above
along $\phi_{t_{j}}^{(k)}.$ But this implies that $\mathcal{D}$
is also unbounded along the original sequence $\phi_{t_{j}}$ (also
using that if $|\psi-\psi'|\leq C$ then $|\mathcal{D}(\psi)-\mathcal{D}(\psi')|\leq2C,$
as follows immediately from the definition \ref{eq:def of ding functional}).
But, since $\mathcal{D}\leq\mathcal{M},$ this contradicts the property
$(ii)$ in formula \ref{eq:prop along cont path} hence it must be
that the first alternative, $DF(\mathcal{X},\mathcal{L})=0,$ holds
and thus $X$ admits a Kähler-Einstein metric, as desired.

\subsection{Towards the case of $\Q-$Fano varieties}

Let us finally discuss some of the new complications that arise when
trying to generalize Tian's program to the case of singular K-polystable
Fano varieties $X$ (by \cite{od} such a Fano variety $X$ automatically
has log terminal singularities). Taking $\eta$ to be a smooth semi-positive
form in $c_{1}(X)$ the continuity equations \ref{eq:aubins eq} are
defined as before and, by the results in \cite{bbegz}, the set $I$
is still non-empty, i.e. $T>0$ (using the positivity of the alpha
invariant of $X).$ The solutions $\omega_{t}$ define Kähler forms
on $X_{reg}$ with volume $c_{1}(X)^{n}/n!.$ Next, we note that Tian's
conjecture admits a natural generalization to general $\Q-$Fano varieties
if one uses the notion of (singular) Ricci curvature appearing in
\cite{bbegz} (and similarly for general log Fano varieties $(X,D)).$
However, one new difficulty that arises is the\emph{ openness }of
$I.$ From the point of view of the implicit function theorem the
problem is to find appropriate Banach spaces, encoding the singularities
of $X$ (the uniqueness of solutions to the formally linearized version
of equation \ref{eq:aubins eq}, for $t\in]0,1[,$ follows from the
results in \cite{bbegz}). On the other hand, another approach could
be to use the following lemma, where the properness refers to the
exhaustion function defined by the $J-$functional (see \cite{bbegz}
for the singular case).
\begin{lem}
The set $I$ is open iff the twisted Ding (Mabuchi) functional $\mathcal{D}_{t}$
(associated to the twisting form $(1-t)\eta)$ is proper for any $t\in I.$\end{lem}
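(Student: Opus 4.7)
The plan is to exploit the variational structure: solutions of Aubin's equation \eqref{eq:aubins eq} at parameter $t$ are precisely the critical points of the twisted Ding functional $\mathcal{D}_t$ on $\mathcal{H}_b(-K_X)$, and by the singular version of Berndtsson's convexity theorem developed in \cite{bbegz}, $\mathcal{D}_t$ is convex along weak geodesics, so that every critical point is automatically a global minimizer. The argument then rests on the differentiation formula in the continuity parameter,
\[
\frac{d}{dt}\mathcal{D}_t(\phi)=-\mathcal{A}^{\eta}(\phi),
\]
where $\mathcal{A}^{\eta}$ is a $J$-type energy functional built from the twist $\eta$, satisfying a two-sided bound $c_1 J(\phi)-C\le \mathcal{A}^{\eta}(\phi)\le c_2 J(\phi)+C$ on bounded energy levels. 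When $\eta$ is merely semipositive I would extract the strict positivity needed for the lower bound by using the Ricci lower bound (i) in \eqref{eq:prop along cont path}, which along the continuity path supplies uniform comparability with a strictly positive reference form.

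For the forward direction (openness of $I$ $\Rightarrow$ $\mathcal{D}_{t_0}$ proper for every $t_0\in I$), I would use openness to produce $s\in I$ with $s>t_0$; the associated solution $\phi_s$ is a global minimizer of $\mathcal{D}_s$ by convexity, so $\mathcal{D}_s\ge -C_0$. Integrating the differentiation formula from $t_0$ to $s$ gives
\[
\mathcal{D}_{t_0}(\phi)-\mathcal{D}_s(\phi)=\int_{t_0}^{s}\mathcal{A}^{\eta}(\phi)\,du\ge (s-t_0)\bigl(c_1 J(\phi)-C\bigr),
\]
which combined with the lower bound on $\mathcal{D}_s$ yields $J$-properness of $\mathcal{D}_{t_0}$.

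For the backward direction, I fix $t_0\in I$ with $\mathcal{D}_{t_0}$ $J$-proper. The same integration, now applied in the opposite sense, shows that for $|s-t_0|$ sufficiently small $\mathcal{D}_s$ differs from $\mathcal{D}_{t_0}$ by at most $|s-t_0|(c_2 J+C)$, so $\mathcal{D}_s$ remains $J$-proper with a slightly diminished coercivity constant. Standard variational machinery in the space of finite-energy metrics, as developed for klt Fano varieties in \cite{bbegz}, then produces a minimizer $\phi_s$ of $\mathcal{D}_s$; being a critical point, $\phi_s$ solves the corresponding twisted Monge--Amp\`ere equation in the pluripotential sense. The uniform $L^\infty$ estimates and the regularity theory on $X_{reg}$ of \cite{bbegz} upgrade $\phi_s$ to a genuine solution $\omega_s$ of Aubin's equation at $s$, so $s\in I$ and $I$ is open at $t_0$.

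The main obstacle I anticipate is the regularity step in the backward direction: promoting a pluripotential minimizer of $\mathcal{D}_s$ to an actual twisted K\"ahler--Einstein metric on $X_{reg}$ with the correct total volume $c_1(X)^n$. In the smooth Fano case this is classical Aubin--Yau theory, but in the klt case it requires the full pluripotential package of \cite{bbegz}, in particular the Ko\l odziej-type $L^\infty$ estimates extended to klt pairs. A secondary technical point is to make the constants in the two-sided bound $c_1 J-C\le \mathcal{A}^{\eta}\le c_2 J+C$ uniform over a small neighborhood of $t_0$; this is essentially automatic since the comparability constants depend continuously on the reference data and are strictly positive at $t_0$.
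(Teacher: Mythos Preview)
Your proposal is correct and follows essentially the same route as the paper. Both directions rest on the same three ingredients: convexity of $\mathcal{D}_s$ along weak geodesics (so a solution at $s$ is a global minimizer and $\mathcal{D}_s$ is bounded below), the linear dependence of $\mathcal{D}_t$ on $t$ through a $J$-type energy, and the variational existence result from \cite{bbegz} that properness yields a solution. Your write-up is simply more explicit than the paper's very terse argument; in particular you spell out the perturbation step (properness at $t_0$ persists with a slightly smaller coercivity constant at nearby $s$) that the paper leaves to a one-line reference to \cite{bbegz}.

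One small correction: your proposed fix for the case of semipositive $\eta$ is off. The Ricci lower bound $(i)$ in \eqref{eq:prop along cont path} is a property of the \emph{solutions} $\omega_t$ along the path, not of $\eta$, and says nothing about $\mathcal{A}^{\eta}(\phi)$ for an arbitrary competitor $\phi$. The two-sided comparison $c_1 J - C \le \mathcal{A}^{\eta} \le c_2 J + C$ comes instead from the fact that $[\eta]=c_1(X)$ is an ample (in the singular case, big and nef) class, which is what makes the corresponding $J$-type energy comparable to $J$; this is independent of the continuity path. This does not affect the structure of your argument.
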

\begin{proof}
If $\mathcal{D}_{t}$ is proper, then it follows from the results
in \cite{bbegz} that a solution $\omega_{t}$ exists. Conversely,
if a solution $\omega_{t}$ exists and $I$ is open, i.e. solutions
$\omega_{t+\delta}$ exist for $\delta$ sufficiently small, then
it follows from the convexity of $\mathcal{D}_{t+\delta}$ along weak
geodesics that $\mathcal{D}_{t+\delta}\geq C.$ But since $\delta$
may be taken to be positive this implies that $\mathcal{D}_{t}$ is
proper (and even coercive; compare \cite{bbegz}). 
\end{proof}
Note that in the case $n=2$ it is a basic fact that a projective
variety $X$ has log terminal singularities iff it has quotient singularities
(defining an orbifold structure on $X)$ and hence the two-dimensional
Fano varieties are precisely the orbifold Del Pezzo surfaces. In the
general Fano orbifold case, if one takes $\eta$ to be an orbifold
Kähler metric, the usual implicit function theorem applies to give
that $I$ above is indeed open. For the case of K-polystable Del Pezzo
surfaces with canonical singularities (i.e. ADE singularities) the
existence of Kähler-Einstein metrics was established very recently
in \cite{od-s-s}, using a different method, thus generalizing the
case of smooth Del Pezzo surfaces settled by Tian \cite{ti2-1}.

\end{document}